\documentclass[12pt]{amsart}
\usepackage{amsmath,amssymb}
\usepackage[mathcal]{eucal}
\usepackage[pdftex,bookmarks,colorlinks,breaklinks]{hyperref}

\hypersetup{
  pdftitle={Shadowing and Hyperbolicity for Endomorphisms of Locally Compact Groups},
  pdfauthor={Dekui Peng},
  pdfsubject={Topological dynamics and locally compact groups},
  pdfkeywords={shadowing property, Anosov endomorphism, Lie group, tidy subgroup}
}

\newtheorem{theorem}{Theorem}[section]
\newtheorem{maintheorem}{Theorem}

\newtheorem{corollary}[theorem]{Corollary}
\newtheorem{lemma}[theorem]{Lemma}
\newtheorem{proposition}[theorem]{Proposition}
\theoremstyle{definition}

\newtheorem{definition}[theorem]{Definition}
\numberwithin{equation}{section}

\theoremstyle{remark}
\newtheorem{remark}[theorem]{Remark}
\newtheorem{example}[theorem]{Example}

\DeclareMathOperator{\di}{d}

\begin{document}

\title[Shadowing and Hyperbolicity]{Shadowing and Hyperbolicity for Endomorphisms of Locally Compact Groups}

\author[D. Peng]{Dekui Peng}
 \address[D. Peng]
	{\hfill\break Institute of Mathematics,
		\hfill\break Nanjing Normal University, 210023,
	\hfill\break China}
\email{pengdk10@lzu.edu.cn}

\subjclass[2020]{Primary 37B65, 22D45; Secondary 37D20, 22D05, 22E15, 54H20.}

\keywords{Shadowing property, Anosov endomorphism, Lie group, tidy subgroup.}

\begin{abstract}
We study shadowing, with respect to the left uniformity, for continuous
endomorphisms of Lie groups and totally disconnected locally compact
groups. For Lie groups, an endomorphism has shadowing if and only if its
differential is hyperbolic, with zero eigenvalues allowed. This includes
singular maps outside the classical theory of Anosov endomorphisms. As
consequences, positively expansive Lie group endomorphisms are
topologically expanding, while on connected semisimple Lie groups the
shadowing endomorphisms are precisely the nilpotent ones. On compact
connected Lie groups, the nonsingular case agrees with classical Anosov
theory. In sharp contrast, every continuous endomorphism of an arbitrary
totally disconnected locally compact group has shadowing, without
compactness, metrizability or invertibility assumptions; the proof uses
Willis' tidy-above decomposition. Consequently, in this category
topological expansion and the topologically Anosov property reduce to
positive expansiveness and expansiveness, respectively. We also discuss
group shifts and revisit Aoki's dense-orbit compactness result without
assuming metrizability.
\end{abstract}

\maketitle

\section{Introduction}

The shadowing property, also known as the pseudo-orbit tracing property,
originated in hyperbolic dynamics. It asserts that every sufficiently
accurate approximate orbit is uniformly followed by a genuine orbit. For
Anosov diffeomorphisms this is one of the basic manifestations of uniform
hyperbolicity, and it plays a central role in stability theory. At the
purely topological level, shadowing is naturally paired with expansiveness:
a homeomorphism which is both expansive and has shadowing is commonly
called topologically Anosov. For homeomorphisms of compact metric spaces,
Walters' theorem that expansiveness together with shadowing implies
topological stability is a fundamental result in this direction
\cite{Walters}; see also
\cite{AokiHiraide,DasLeeRichesonWiseman}.

The non-invertible smooth theory developed along a related but distinct
line. Shub initiated the systematic study of expanding endomorphisms of
compact manifolds \cite{Shub1969}. Ma\~n\'e and Pugh studied stability for
endomorphisms \cite{ManePugh1975}, and Przytycki developed the theory of
Anosov endomorphisms and Axiom~A endomorphisms \cite{Przytycki1976}. In the
non-invertible setting the unstable direction may depend on the entire
prehistory of a point, so the natural extension, or inverse-limit system,
becomes an intrinsic part of the theory. The usual definition of an Anosov
endomorphism therefore assumes that the map is a local diffeomorphism and
encodes hyperbolicity either through a stable bundle and an expanding
quotient bundle or through stable--unstable splittings over the natural
extension.

These classical theories suggest two questions for group endomorphisms.
First, to what extent can shadowing be characterized directly by the
algebraic or infinitesimal structure of the map? Second, what remains of
hyperbolic dynamics when the endomorphism is singular and hence falls
outside the standard local-diffeomorphism framework? The first question is
particularly natural for Lie groups, where every endomorphism induces a
linear endomorphism of the Lie algebra. The second is unavoidable for group
endomorphisms, since the differential may have a non-trivial kernel even
when the resulting dynamics still has shadowing.

For group automorphisms, expansiveness has long been studied as a rigid
structural property. In the compact setting, early results go back to Wu
\cite{Wu1966}; for locally compact solvable groups and Lie groups, see Aoki
\cite{Aoki1979} and Morimoto \cite{Morimoto1981}. More recently, Shah gave
a systematic treatment for locally compact groups \cite{Shah2020}, while
Gl\"ockner and Raja related expansiveness in totally disconnected locally
compact groups to contraction groups, quotients and the nub
\cite{GlocknerRaja2017,WillisNub}.

Morimoto also studied the shadowing
property for group automorphisms. In particular, the connected compact Lie
group automorphism case is essentially classical: his work treats Euclidean
and toral automorphisms and gives related structural restrictions for
automorphisms of compact connected Lie groups \cite{Morimoto1981}. Aoki later
proved a hyperbolic-lifting criterion for solenoidal automorphisms
\cite[Theorem~2]{AokiSolenoidal1983}. Gromov subsequently gave a
closely related bounded-error, unique-shadowing result for hyperbolic
automorphisms of simply connected nilpotent Lie groups, together with an
expanding-semigroup version \cite[\S2.3]{Gromov2012}. His uniform large-scale
formulation is not identical to the left-uniform pseudo-orbit tracing
property used here, but it is an important precursor. The new scope claimed
for the Lie theorem below is the usual left-uniform shadowing
characterization on arbitrary Lie groups, including non-compact,
disconnected, non-invertible and possibly singular cases.

Shadowing behaves very differently in the totally disconnected locally
compact category. Aoki proved that every automorphism of a compact totally
disconnected metrizable group has the shadowing property
\cite[Application~2, pp.~170--172]{AokiSplitting1985}; this result is also recorded as Theorem~D in
\cite{AokiDense1985}. There are also prior endomorphism results which imply
shadowing in important profinite subclasses. For a profinite group of
type~(F), Reid constructs a neighbourhood basis of open subgroups $V$ with
$\alpha(V)\leq V$ for every continuous endomorphism $\alpha$
\cite[Definition~3.1 and Proposition~3.2]{Reid2014}; the elementary error
recursion then shows that $x_0$ shadows every sufficiently accurate
pseudo-orbit. Moreover, Wibmer proves that a positively expansive profinite
endomorphism is conjugate to a one-sided group shift of finite type
\cite[Proposition~2.1 and Lemma~3.3]{Wibmer2022}, so the standard finite-type
shadowing theorem applies. Compact open subgroups provide a zero-dimensional
local structure, and Willis' tidy subgroup theory supplies stable and
unstable correction mechanisms beyond these subclasses. We have not found
an earlier automatic-shadowing theorem at the full generality used below:
arbitrary totally disconnected locally compact groups and continuous,
possibly non-invertible endomorphisms. Thus shadowing is not a hyperbolicity
assumption in this category, but is automatic for every continuous
endomorphism.

The purpose of this paper is to make this contrast precise. We work with
the left uniformity of a topological group. For Lie groups, shadowing is
shown to be equivalent to hyperbolicity of the differential. When the
differential is nonsingular, this identifies the shadowing maps with the
classical Anosov endomorphisms; when it is singular, the same criterion
extends beyond the traditional theory. For totally disconnected locally
compact groups, every continuous endomorphism has shadowing. These two
results also clarify when the topological notions of expansion and Anosov
dynamics reduce simply to positive expansiveness and expansiveness.

\subsection{Shadowing and expansiveness}

All topological spaces considered in this paper are Hausdorff. By a
dynamical system $(X,f)$ we mean a topological space $X$ together with a
continuous map $f:X\to X$.

\begin{definition}
Let $(X,d)$ be a metric space, and let $f:X\to X$ be continuous. For
$\delta>0$, a finite or infinite sequence $(x_k)$ is called a
\emph{$\delta$-pseudo-orbit} if
\[
d(f(x_k),x_{k+1})<\delta
\]
whenever both $x_k$ and $x_{k+1}$ are defined. Given $\varepsilon>0$, we say
that $(x_k)$ is \emph{$\varepsilon$-shadowed} by a point $x\in X$, or by the
orbit of $x$, if
\[
d(f^k(x),x_k)<\varepsilon
\]
for every index $k$ for which $x_k$ is defined.

We say that the system $(X,f)$, or simply $f$, has \emph{shadowing} if for
every $\varepsilon>0$ there exists $\delta>0$ such that every one-sided
infinite $\delta$-pseudo-orbit is $\varepsilon$-shadowed by some point of
$X$. We say that $f$ has \emph{finite shadowing} if the same condition holds
for every finite $\delta$-pseudo-orbit. If $f$ is a homeomorphism, the
\emph{two-sided shadowing property} is defined analogously using
pseudo-orbits $(x_k)_{k\in\mathbb Z}$ and requiring
$d(f^k(x),x_k)<\varepsilon$ for every $k\in\mathbb Z$.
\end{definition}

This notion extends naturally to uniform spaces \cite{YanZeng}. Since every topological
group carries a canonical left uniformity, it makes sense to define
shadowing for self-maps of topological groups. In this paper we mainly
consider endomorphisms rather than arbitrary continuous self-maps.

Throughout the paper, an endomorphism is always assumed to be continuous,
and an automorphism is always assumed to be a topological automorphism. We
denote by $\operatorname{End}(G)$ and $\operatorname{Aut}(G)$ the monoid of
endomorphisms and the group of automorphisms of $G$, respectively.

\begin{definition}
Let $G$ be a topological group and let $\alpha\in\operatorname{End}(G)$.
For a neighbourhood $V$ of $e$, a finite or one-sided infinite sequence
$(x_k)$ in $G$ is called a \emph{$V$-pseudo-orbit} for $\alpha$ if
\[
\alpha(x_k)^{-1}x_{k+1}\in V
\]
whenever both $x_k$ and $x_{k+1}$ are defined.

Let $U$ be a neighbourhood of $e$. We say that a finite or one-sided
infinite sequence $(x_k)$ is \emph{$U$-shadowed} by a point $x\in G$, or by
the orbit of $x$, if
\[
\alpha^k(x)^{-1}x_k\in U
\]
for every index $k$ for which $x_k$ is defined.

We say that $\alpha$ has \emph{shadowing} if for every neighbourhood $U$ of
$e$ there exists a neighbourhood $V$ of $e$ such that every one-sided
infinite $V$-pseudo-orbit for $\alpha$ is $U$-shadowed by some point of
$G$.

If $\alpha\in\operatorname{Aut}(G)$, a sequence $(x_k)_{k\in\mathbb Z}$ is
called a \emph{two-sided $V$-pseudo-orbit} if
\[
\alpha(x_k)^{-1}x_{k+1}\in V
\qquad (k\in\mathbb Z).
\]
We say that $\alpha$ has \emph{two-sided shadowing} if, for every
neighbourhood $U$ of
$e$, there exists a neighbourhood $V$ of $e$ such that every two-sided
$V$-pseudo-orbit is $U$-shadowed by a point $x\in G$, in the sense that
\[
\alpha^k(x)^{-1}x_k\in U
\qquad (k\in\mathbb Z).
\]

We say that $\alpha$ has \emph{finite shadowing} if for every neighbourhood
$U$ of $e$ there exists a neighbourhood $V$ of $e$ such that every finite
$V$-pseudo-orbit for $\alpha$ is $U$-shadowed by some point of $G$.
\end{definition}

In topological groups, expansiveness has a particularly simple form, since
it is enough to compare points with the identity.

\begin{definition}
Let $G$ be a topological group.
\begin{enumerate}
\item An automorphism $\alpha\in\operatorname{Aut}(G)$ is called
\emph{expansive} if there exists a neighbourhood $U$ of $e$ such that
\[
\bigcap_{n\in\mathbb Z}\alpha^{-n}(U)=\{e\}.
\]
Such a neighbourhood $U$ is called an \emph{expansive neighbourhood} for
$\alpha$.

\item An endomorphism $\alpha\in\operatorname{End}(G)$ is called
\emph{positively expansive} if there exists a neighbourhood $U$ of $e$ such
that
\[
\bigcap_{n\geq 0}\alpha^{-n}(U)=\{e\}.
\]
Such a neighbourhood $U$ is called a \emph{positively expansive
neighbourhood} for $\alpha$.
\end{enumerate}
\end{definition}

It is worth noting that if a locally compact group admits an expansive
automorphism, or a positively expansive endomorphism, then it is
first-countable and hence metrizable by the Birkhoff--Kakutani metrization
theorem.

Expansive automorphisms of topological groups have been studied for a long
time. In the compact group setting, early results go back to Wu
\cite{Wu1966}; for locally compact solvable groups and Lie groups, see
Aoki~\cite{Aoki1979} and Morimoto~\cite{Morimoto1981}. More recently,
expansive automorphisms of locally compact groups were studied
systematically by Shah~\cite{Shah2020}. In the totally disconnected locally
compact setting, Gl\"ockner and Raja~\cite{GlocknerRaja2017,WillisNub} studied
expansive automorphisms in relation to contraction groups, quotients, and
the nub. From the symbolic-dynamical viewpoint, Kitchens and Schmidt
established a fundamental structure theory for expansive automorphisms of
compact groups \cite{Kitchens1987,KS1989}, and Kitchens later obtained a structural
description of expansive dynamics on locally compact groups
\cite{Kitchens2021}. These results are especially relevant in the totally
disconnected setting: as Aoki's theorem and
Theorem~\ref{thm:main-tdlc} below show,
shadowing is automatic there, whereas expansiveness is the genuinely
restrictive condition carrying the main structural information.

Following standard terminology in topological dynamics
\cite{AokiHiraide,DasLeeRichesonWiseman}, we recall the terminology combining
expansiveness and shadowing.

\begin{definition}
Let $(X,f)$ be a topological dynamical system endowed with a fixed
compatible uniformity. We call $f$ \emph{positively expansive} if there is
an entourage $E$ such that
\[
(f^n(x),f^n(y))\in E\quad\text{for every }n\geq 0
\]
implies $x=y$. If $f$ is a homeomorphism, it is called \emph{expansive} if
the analogous implication holds for every $n\in\mathbb Z$.
\begin{enumerate}
\item The system $(X,f)$ is called \emph{topologically expanding} if $f$ is
positively expansive and has the shadowing property.
\item If $f$ is a homeomorphism, then $(X,f)$ is called
\emph{topologically Anosov} if $f$ is expansive and has the two-sided
shadowing property.
\end{enumerate}
\end{definition}

Recall also the corresponding smooth notions. A $C^1$ diffeomorphism
$f:M\to M$ of a compact smooth manifold is an \emph{Anosov diffeomorphism}
if there is a continuous $Df$-invariant splitting
$TM=E^{\mathrm s}\oplus E^{\mathrm u}$ on which forward iterates contract
uniformly on $E^{\mathrm s}$ and backward iterates contract uniformly on
$E^{\mathrm u}$. In this paper, an \emph{Anosov endomorphism} is a $C^1$
local diffeomorphism $f:M\to M$ for which there are a continuous
$Df$-invariant subbundle $E^{\mathrm s}\subseteq TM$, constants $C>0$ and
$0<\lambda<1$, and a Riemannian metric such that, for every $n\geq 0$,
\[
\|Df^n v\|\leq C\lambda^n\|v\|
\quad(v\in E^{\mathrm s}),
\]
while the induced map on $TM/E^{\mathrm s}$ satisfies
\[
\|\overline{Df^n}\,\bar v\|\geq C^{-1}\lambda^{-n}\|\bar v\|.
\]
For local diffeomorphisms this quotient-bundle formulation corresponds to
the stable--unstable description over the natural extension, in which the
unstable space may depend on the chosen prehistory; see
\cite{ManePugh1975,Przytycki1976,AokiHiraide}.

\subsection{Main results}

Our first main result is the following complete characterization for Lie
groups. See Theorem~\ref{LieChar} below.

\begin{maintheorem}\label{thm:main-lie}
Let $G$ be a Lie group with Lie algebra $\mathfrak g$, and let
$\alpha:G\to G$ be a continuous endomorphism. Then $\alpha$ has shadowing if
and only if the differential
\[
\di\alpha:\mathfrak g\to\mathfrak g
\]
is hyperbolic.
\end{maintheorem}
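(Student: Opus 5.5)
We prove the two implications separately. Throughout we use the exponential map $\exp:\mathfrak g\to G$, which restricts to a diffeomorphism from a small ball $B\subseteq\mathfrak g$ onto a neighbourhood of $e$. We also use the identity $\alpha\circ\exp=\exp\circ\di\alpha$. The key observation is that shadowing for $\alpha$ is a local property near the identity, in the following sense. If $(x_k)$ is a $V$-pseudo-orbit, write $v_k=\alpha(x_k)^{-1}x_{k+1}\in V$. We look for a shadowing point of the form $x=x_0\exp(w_0)$. Then $\alpha^k(x)^{-1}x_k$ is governed by a recursion of "error" elements $g_k=\alpha^k(x)^{-1}x_k$, namely
\[
g_{k+1}=\alpha(g_k)\,v_k .
\]
This recursion depends only on the increments $v_k$, not on where the $x_k$ lie in $G$. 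So $\alpha$ has shadowing if and only if the following holds: for every $U$ there is $V$ such that every sequence $(v_k)\subseteq V$ admits some $g_0\in U$ for which the recursion keeps all $g_k$ in $U$. (Here $g_0=x^{-1}x_0$ is free, so $x=x_0g_0^{-1}$.) The whole problem is thereby reduced to a "twisted cocycle" problem in a neighbourhood of $e$. In that neighbourhood we pass to $\mathfrak g$ via $\log$, using Baker--Campbell--Hausdorff to control the group product.

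\emph{Sufficiency.} Suppose $\di\alpha$ is hyperbolic with zero eigenvalues allowed. Write $\mathfrak g=E^s\oplus E^u$, where $\di\alpha$ contracts $E^s$ (this includes the generalized kernel) and $\di\alpha|_{E^u}$ is invertible and expanding. Choose an adapted norm in which the contraction and expansion rates are $\lambda<1$. In logarithmic coordinates the recursion becomes $w_{k+1}=\di\alpha(w_k)+u_k+R(w_k,u_k)$, where $u_k=\log v_k$ and the remainder $R$ is quadratically small. This is a Lipschitz-small perturbation of the hyperbolic linear recursion $w_{k+1}=\di\alpha\,w_k+u_k$. The linear recursion has the bounded solution
\[
w_k=\sum_{j<k}(\di\alpha)^{k-1-j}\pi_s u_j-\sum_{j\geq k}(\di\alpha|_{E^u})^{-(j-k+1)}\pi_u u_j ,
\]
which satisfies $\|w\|_\infty\le C\|u\|_\infty$; note that singularity of $\di\alpha$ is harmless here. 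The nonlinear problem then follows by the standard contraction-mapping argument on $\ell^\infty(\mathbb N_0,\mathfrak g)$ restricted to a small ball. Here the $E^s$-component of $w_0$ is left free (set to $0$), which makes the map well defined one-sidedly. I expect the fixed point argument to be routine once the radius is small.

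\emph{Necessity.} Suppose $\di\alpha$ has an eigenvalue $\mu$ with $|\mu|=1$. We must construct, for every $V$, a $V$-pseudo-orbit that no orbit follows within a fixed $U$. Take $U=\exp(B_r)$ small. Work with the real $\di\alpha$-invariant subspace $W$ of dimension 1 or 2 attached to $\mu$, and pick a generalized eigenvector in the top Jordan level, so that $\di\alpha$ acts on a quotient as a rotation or as $\pm1$. Choose increments $u_k=\eta\,(\di\alpha)^{k}\xi$ (all in $V$), arranged so that the linear responses add up coherently; for a Jordan block one can use constant increments along the eigendirection instead. Then the linear error $w_k$ grows like $k\eta$ in the $W$-direction modulo the remaining directions, and no choice of $w_0$ can compensate, since the central part of $w_0$ is merely rotated. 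The nonlinear terms must stay negligible while the $w_k$ remain in $B_r$. Thus, assuming shadowing, all $g_k\in U$ would force $|w_k|<r$, contradicting a growth estimate once $k\sim r/\eta$.

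I expect this step to be the main obstacle. The BCH error terms, of order $|w||u|$, are not uniformly summable over the roughly $r/\eta$ steps. The way to control them is to project onto the quotient $\mathfrak g/(\text{sum of the other generalized eigenspaces})$, or better onto the central linear functional $\phi$ with $\phi\circ\di\alpha=\mu\phi$ (taking modulus in the complex case). We then estimate $|\phi(w_{k+1})|-|\phi(w_k)|\ge \eta-c\,r\eta$, which sums to a contradiction. A finite pseudo-orbit suffices for this, so we in fact show that even finite shadowing fails.
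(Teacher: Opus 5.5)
\paragraph{Sufficiency.} This half is the paper's argument. The recursion $g_{k+1}=\alpha(g_k)v_k$ with shadowing point $x_0g_0^{-1}$ is Proposition~\ref{HypSha}. Your bounded solution operator, with vanishing stable component at $k=0$, is the map $\Gamma$ of Lemma~\ref{lem:local-error-equation}.

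\paragraph{Necessity: the gap.} This half has a concrete gap, in exactly the case where the paper does its real work: a unimodular eigenvalue $\mu$ with a Jordan block of size $q\geq 2$.
\begin{enumerate}
\item If $\xi$ is a top-level generalized eigenvector, then $\|(\di\alpha)^k\xi\|$ grows like $k^{q-1}$. So the increments $u_k=\eta(\di\alpha)^k\xi$ have size of order $\eta(r/\eta)^{q-1}\geq r$ at the times $k\sim r/\eta$ you need, and they do not lie in $\log V$.
\item Your fallback, constant increments along the eigendirection, gives pseudo-orbits that can be shadowed. For $\mu=1$, $q=2$ and $u_k=\eta v_0$ in the linear model, the choice $w_0=-\eta v_1$ yields $w_k=A^kw_0+k\eta v_0=-\eta v_1$ for every $k$.
\item Hence the claim that the central part of $w_0$ is ``merely rotated'' is false. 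Only $\phi(w_0)$ is rotated, because $\phi\circ A=\mu\phi$.
\item Moreover, $\phi$ vanishes on the eigenvector when $q\geq 2$, so your final paragraph does not see the drift produced by your own increments.
\item The per-step bound $|\phi(w_{k+1})|-|\phi(w_k)|\geq\eta-cr\eta$ is also unjustified, since the increment cannot be aligned in advance with the unknown $\phi(w_k)$.
\end{enumerate}

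\paragraph{Repair.} The idea in your last paragraph does fix this.
\begin{itemize}
\item Choose real $u_k$ with $\|u_k\|\leq C_0\eta$ and $\phi(u_k)=\eta\mu^{k+1}$. This is possible because $\phi$ can be taken real and nonzero when $\mu=\pm1$, and $\phi|_{\mathfrak g}$ is real-linearly onto $\mathbb C$ when $\mu\notin\mathbb R$.
\item Put $z_k=\mu^{-k}\phi(w_k)$. The map $(X,Y)\mapsto\log(\exp X\exp Y)-X-Y$ vanishes when $X=0$ and when $Y=0$, so $\|R(w,u)\|\leq c\|w\|\|u\|$.
\item Therefore $\operatorname{Re}z_{k+1}\geq\operatorname{Re}z_k+\eta(1-c'r)$. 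But $|z_k|\leq\|\phi\|r$ for all $k$, which is impossible for $k>4\|\phi\|r/\eta$ once $c'r\leq\frac12$.
\item You also need $\exp$ injective on a ball containing $B_r$ and a neighbourhood of $A(B_r)$. Otherwise you cannot conclude that $\log g_{k+1}$ equals $\log(\exp(Aw_k)\exp u_k)$.
\end{itemize}

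\paragraph{Comparison with the paper.} Once repaired, your route is genuinely different from the paper's and shorter. The paper first transfers shadowing of $\alpha$ to local finite shadowing of $A$ in Theorem~\ref{LieChar}. It then works with the right-eigenvector coordinate, which forces the $t^q$ polynomial-separation argument of Lemma~\ref{LineShadEqu}. Your left eigenfunctional makes $\phi(A^kw_0)$ exactly rotate, so a linear drift suffices. Working with $g_k=\alpha^k(x)^{-1}x_k$, which lie in $U$ by hypothesis, also avoids the paper's care in keeping $\alpha^i(y)$ inside the chart. In exchange, the paper's route isolates a purely linear shadowing criterion.
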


Here a linear endomorphism is called \emph{hyperbolic} if its spectrum is
disjoint from the unit circle; zero eigenvalues are allowed. The proof has
two parts. Hyperbolicity implies shadowing by solving a nonlinear error
equation in local coordinates. Conversely, shadowing of $\alpha$ forces
$\di\alpha$ to have local finite shadowing near the origin, and the
finite-dimensional linear criterion then yields hyperbolicity.

Theorem~\ref{thm:main-lie} fits naturally into the classical theory of
Anosov maps but also
goes beyond it. If $\di\alpha$ is nonsingular, then $\alpha$ is a local
diffeomorphism, and on a connected Lie group it is a covering map. In this
case, on compact Lie groups, Theorem~\ref{thm:main-lie} identifies shadowing
with the usual
hyperbolicity condition for Anosov endomorphisms. If $\di\alpha$ is
singular, however, $\alpha$ is not a local diffeomorphism and hence lies
outside the standard theory of Anosov endomorphisms, while
Theorem~\ref{thm:main-lie} still applies.

For automorphisms, the result has a particularly clean interpretation. On
a compact Lie group, hyperbolicity of $\di\alpha$ is equivalent to the
classical Anosov splitting of the tangent bundle. Combining this with the
known infinitesimal characterization of expansiveness gives the equivalence
of classical Anosov dynamics, topological Anosov dynamics, expansiveness,
one-sided shadowing and two-sided shadowing. In the compact connected case,
this is essentially contained in Morimoto's work
\cite{Morimoto1981}; our formulation extends the picture to arbitrary Lie
groups and supplies the endomorphism and singular cases.

We also obtain a corresponding statement for positively expansive
endomorphisms.

\begin{maintheorem}\label{thm:main-positive-expansive}
Let $G$ be a Lie group, and let $\alpha:G\to G$ be a continuous
endomorphism. If $\alpha$ is positively expansive, then $\alpha$ has
shadowing. Consequently, $\alpha$ is topologically expanding if and only if
it is positively expansive. Moreover, if $G$ admits a positively expansive
endomorphism, then the identity component $G_0$ is nilpotent.
\end{maintheorem}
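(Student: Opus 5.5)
Via Theorem~\ref{thm:main-lie}, the plan is to reduce positive expansiveness to a spectral condition on $\di\alpha$. First I will show that if $\alpha$ is positively expansive, then every eigenvalue of $\di\alpha$ (over $\mathbb C$) has modulus strictly greater than $1$. In particular $\di\alpha$ is invertible and hyperbolic, so shadowing follows at once from Theorem~\ref{thm:main-lie}.

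Suppose instead that some eigenvalue $\lambda$ has $|\lambda|\le 1$. Let $W\subseteq\mathfrak g$ be the real $\di\alpha$-invariant subspace given by the generalized (real) eigenspace for all eigenvalues of modulus at most $1$; it is nonzero. The norm of $\di\alpha^n|_W$ then grows at most polynomially, and I need a genuine point that stays near $e$. Two cases handle this. If $\lambda=0$, pick $0\neq X\in\ker\di\alpha$ and set $g=\exp(tX)$ with $t$ small. Then $g\ne e$, $\alpha(g)=\exp(t\,\di\alpha X)=e$, and so $\alpha^n(g)=e$ for all $n\ge1$, contradicting positive expansiveness. If $0<|\lambda|\le1$, take a real invariant subspace $E\subseteq W$ of dimension $1$ or $2$ on which $\di\alpha$ acts as a similarity of ratio $|\lambda|\le 1$ (an eigenvector, or the real and imaginary parts of one). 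Then $\|\di\alpha^n v\|\le C\|v\|$ for $v\in E$ and all $n$. Choose $v\in E$ small with $\exp$ injective near $0$; then $\alpha^n(\exp v)=\exp(\di\alpha^n v)$ stays in any prescribed neighbourhood $U$ of $e$, while $\exp v\neq e$. This contradicts $\bigcap_{n\ge0}\alpha^{-n}(U)=\{e\}$. The naturality identity $\alpha\circ\exp=\exp\circ\di\alpha$ is what makes both cases work, and in neither case do I need injectivity of $\alpha$.

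The equivalence ``topologically expanding $\iff$ positively expansive'' is then immediate from the definition together with the first statement. The uniformity used to define positive expansiveness of a group map is the left one, and a positively expansive neighbourhood in the group sense gives an expansive entourage for the left uniformity: if $\alpha^n(x)^{-1}\alpha^n(y)=\alpha^n(x^{-1}y)\in U$ for all $n$, then $x=y$.

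For nilpotency of $G_0$, note that every eigenvalue of $D=\di\alpha$ has modulus greater than $1$, so $D$ is a linear automorphism of $\mathfrak g$ that is also a Lie algebra automorphism. By a theorem of Jacobson, a Lie algebra with an automorphism having no root of unity as an eigenvalue is nilpotent. More directly: for generalized eigenspaces, $[\mathfrak g_\lambda,\mathfrak g_\mu]\subseteq\mathfrak g_{\lambda\mu}$. Let $r$ be the minimal modulus of an eigenvalue, so $r>1$. Then iterated brackets of length $k$ land in generalized eigenspaces whose eigenvalues have modulus at least $r^k$, which exceeds the spectral radius for $k$ large. Hence $\mathfrak g$ is nilpotent and so is the connected group $G_0$. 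The step I expect to need the most care is the $|\lambda|\le1$ case when there are Jordan blocks. Restricting to a single eigenvector (or its real two-plane) avoids polynomial growth, so I will insist on that choice.
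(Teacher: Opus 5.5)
Your proposal is correct. The first two parts follow the paper. Its Proposition~\ref{PostEx} also takes an eigenvalue of modulus at most $1$, extracts a nonzero $X$ with bounded forward orbit $(A^nX)$ (an eigenvector, or its real two-plane), and observes that $\exp(tX)$ then stays in the expansive neighbourhood forever. It then invokes Theorem~\ref{LieChar}. Your separate treatment of $\lambda=0$ is harmless but unnecessary, since a kernel vector already has bounded orbit.

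Where you differ is the nilpotency of $G_0$. The paper cites Bourbaki's exercise (Lemma~\ref{Bou}): any Lie algebra with a hyperbolic automorphism is nilpotent. You instead prove directly the special case in which every eigenvalue has modulus $>1$. You use $[\mathfrak g_\lambda,\mathfrak g_\mu]\subseteq\mathfrak g_{\lambda\mu}$ on $\mathfrak g_{\mathbb C}$, so the $k$-th lower central series term lies in $\bigoplus_{|\nu|\geq r^k}\mathfrak g_\nu$, which vanishes once $r^k$ exceeds the spectral radius.

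Your route is self-contained and elementary, which makes the appeal to Jacobson superfluous. The paper's citation covers the general hyperbolic case. There, eigenvalues of modulus $<1$ and $>1$ can multiply to modulus near $1$, so your modulus-counting argument does not apply directly. The paper needs that general case anyway for Theorem~\ref{thm:Lie-topological-Anosov} and Proposition~\ref{semisim}.
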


This result is proved below as
Theorem~\ref{thm:positive-expansive-shadowing}. The infinitesimal form of
positive expansiveness is stronger than
hyperbolicity: a Lie group endomorphism is positively expansive exactly
when every eigenvalue of its differential has modulus strictly larger than
$1$. The final assertion in
Theorem~\ref{thm:main-positive-expansive} uses a classical theorem of
Bourbaki:
a finite-dimensional Lie algebra admitting a hyperbolic automorphism is
nilpotent \cite{BourbakiLie}. In the connected case, the spectral
characterization of positive expansiveness used here is the cyclic-semigroup
specialization of Bhattacharya's criterion
\cite[Theorem~A and Proposition~2.4]{Bhattacharya2004}. In the compact Lie
case, positive expansiveness makes the endomorphism open, so automatic
shadowing also follows from Sakai's theorem
\cite[Theorem~1]{Sakai2003}; compare \cite[Lemma~9]{Sakai2010}. The compact
connected structural consequence has an earlier antecedent in Eisenberg's
theorem on positively expansive surjective endomorphisms
\cite{EisenbergPositive1966}. Accordingly,
Theorem~\ref{thm:main-positive-expansive} is presented as a consequence of
Theorem~\ref{thm:main-lie}, rather than as an independent priority claim; its
additional scope is the non-compact Lie setting.

The Lie group characterization also gives a clean description in the
semisimple case. We call an endomorphism $\alpha$ \emph{nilpotent} if some iterate
of $\alpha$ is the trivial endomorphism. For a connected Lie group, this is
equivalent to the nilpotency of $\di\alpha$. We prove that if $G$ is a
connected semisimple Lie group and $\alpha\in\operatorname{End}(G)$, then
$\alpha$ has shadowing if and only if $\alpha$ is nilpotent. Thus, in the
semisimple Lie setting, non-trivial recurrent group structure obstructs
shadowing for automorphic dynamics.

This conclusion is genuinely Lie-theoretic and does not extend to all
connected compact groups. Indeed, if $S$ is a non-abelian simple connected
compact Lie group, then the full shift on $S^{\mathbb Z}$ is a shadowing
automorphism of a connected semisimple compact group. This example shows
that infinite-dimensional compact groups may admit symbolic shadowing
phenomena which are invisible at the finite-dimensional Lie level.

Our second main theorem concerns totally disconnected locally compact
groups. Here the situation is strikingly different. The compact metrizable
automorphism case was proved by Aoki
\cite[Application~2, pp.~170--172]{AokiSplitting1985}; see also
\cite[Theorem~D]{AokiDense1985}. Our proof uses Willis' tidy subgroup theory for
endomorphisms of totally disconnected locally compact groups
\cite{Willis1994,Willis2001,WillisEndomorphisms}.

\begin{maintheorem}\label{thm:main-tdlc}
Let $G$ be a totally disconnected locally compact group, and let
$\alpha:G\to G$ be a continuous endomorphism. Then $\alpha$ has shadowing.
\end{maintheorem}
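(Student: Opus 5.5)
The plan is to reduce shadowing to a statement about a single compact open subgroup, using Willis' tidy-above theory for endomorphisms \cite{Willis1994,WillisEndomorphisms}. Given a neighbourhood $U$ of $e$, I will fix a compact open subgroup $V\subseteq U$ that is tidy above for $\alpha$. By Willis, such subgroups exist inside every neighbourhood of $e$. Tidiness above gives
\[
V=V_+V_-=V_-V_+,\qquad \alpha(V_-)\subseteq V_-,\qquad V_+\subseteq\alpha(V_+),
\]
where $V_-=\{v\in V:\alpha^n(v)\in V\ \forall n\ge0\}$ and $V_+$ is the set of $v\in V$ admitting an $\alpha$-regressive trajectory inside $V$. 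I will take the pseudo-orbit tolerance to be a suitable open subgroup $W\subseteq V$, to be specified later.

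The first step is to rewrite shadowing as an error recursion. Let $(x_k)$ be a $W$-pseudo-orbit, so $x_{k+1}=\alpha(x_k)w_k$ with $w_k\in W$. For a candidate shadowing point $x$ put $e_k=\alpha^k(x)^{-1}x_k$. Then
\[
e_{k+1}=\alpha(e_k)\,w_k .
\]
Thus $x$ $V$-shadows $(x_k)$ if and only if $e_0=x^{-1}x_0$ generates a trajectory of this twisted recursion staying in $V$. Since $e_0$ is arbitrary in $V$, we may then set $x=x_0e_0^{-1}$.

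Now let $S_N\subseteq V$ be the set of $e_0$ for which $e_0,\dots,e_N\in V$. Each $S_N$ is closed in the compact set $V$, because each $e_k$ depends continuously on $e_0$, and the sets are nested. Hence it suffices to show every $S_N$ is nonempty, that is, finite shadowing with constant $V$. Compactness then yields a point of $\bigcap_NS_N$ and gives one-sided infinite shadowing. This compactness step needs no metrizability.

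The core step is to prove $S_N\neq\emptyset$ for all $N$. There are two model cases. If every $w_k$ lies in $V_-$, then $e_0=e$ works, since $V_-$ is an $\alpha$-stable subgroup. If every $w_k$ lies in $V_+$, one solves backwards from $e_N=e$: each $e_{k+1}w_k^{-1}\in V_+\subseteq\alpha(V_+)$ has a preimage $e_k\in V_+$. The general case mixes these two mechanisms, forward for the stable part and backward for the unstable part.

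I would first try an induction on $N$ with a stronger hypothesis. The hypothesis is that for every $W$-sequence and every prescribed ``unstable target'' $u\in V_+$, there is a solution with $e_N\in uV_-$. At each backward step I factor $e_{k+1}w_k^{-1}=ab$ with $a\in V_+$ and $b\in V_-$. I then lift $a$ through $\alpha$ inside $V_+$, and push the $V_-$-factor forward, where it stays in $V_-$ because $\alpha(V_-)\subseteq V_-$. For this to work I will choose $W$ so that conjugating $V_-$-elements by $W$ and by the lifts stays inside $V$. A natural choice is to take $W$ normal in $V$, for instance the core $\bigcap_{v\in V}vV'v^{-1}$ of a smaller tidy subgroup $V'$, which is open because $V$ is compact.

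The main obstacle I expect is exactly this noncommutative bookkeeping. In the product $\alpha(ab)w=\alpha(a)\alpha(b)w$, the $V_-$-part and the $V_+$-part do not commute. One must check that reordering via $V=V_-V_+$ does not make the unstable component leave $\alpha(V_+)$. If the direct induction fails, my fallback is to replace $V_+$ by the larger group $V_{++}=\bigcup_n\alpha^n(V_+)$. I would then use the standard fact from tidy theory that $\alpha(V)\cap V$ behaves well, namely $\alpha(V)\cap V=\alpha(V_+)V_-\cap V$, to guarantee that each required backward lift exists in $V$.
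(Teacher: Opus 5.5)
Your reduction is sound and matches the paper's final compactness step. With $e_k=\alpha^k(x)^{-1}x_k$ the recursion $e_{k+1}=\alpha(e_k)w_k$ is correct, and nonemptiness of the nested compact sets $S_N$ suffices.

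\textbf{The gap.} You never prove $S_N\neq\varnothing$, and the backward induction you propose does not close.
\begin{itemize}
\item \emph{The target cannot be fixed in advance.} Apply your hypothesis to $w_0,\dots,w_{N-1}$ with some target $u'\in V_+$. You get $e_N=u'v$ with $v\in V_-$ outside your control, and then $e_{N+1}=\alpha(u')\,\alpha(v)w_N$. The element $\alpha(v)w_N\in V$ in general has a nontrivial $V_+$-component. That component depends on $v$, hence on $u'$, so $u'$ cannot be chosen beforehand to make $e_{N+1}\in uV_-$.
\item \emph{Normality of $W$ does not help.} Taking $W$ normal in $V$ only lets you move $w_N$ past $\alpha(v)$. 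With $\alpha(u')=u$ you would still need $w_N\in V_-$.
\item \emph{Genuine backward solving fails.} It requires exact preimages in $V$. If $e_{k+1}w_k^{-1}=ab$ with $a\in V_+\subseteq\alpha(V)$, then $ab\in\alpha(V)$ forces $b\in\alpha(V)$, which need not hold for $b\in V_-$.
\item \emph{Your fallback identity is false as written.} Since $\alpha(V_+)\cap V=V_+$, the set $\alpha(V_+)V_-\cap V$ equals $V_+V_-=V$. The identity would therefore force $V\subseteq\alpha(V)$, which fails, e.g., for the shift on $F^{\mathbb Z}$ with $F$ finite. The correct statement is $V\cap\alpha(V)=V_+\alpha(V_-)$.
\end{itemize}

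\textbf{The missing idea.} Your own error recursion has a symmetry: if $(e_k)$ solves $e_{k+1}=\alpha(e_k)w_k$ and $c_{k+1}=\alpha(c_k)$, then $(c_ke_k)$ solves it as well. This lets you correct \emph{forwards} without disturbing earlier times.
\begin{itemize}
\item Take $W=V$ and induct on $m$ with the invariant that $e_0,\dots,e_m\in V$ and $e_m\in V_-$. Start with $e_0=e$.
\item Then $e_{m+1}=\alpha(e_m)w_m\in V$. Write $e_{m+1}=ab$ with $a\in V_+$ and $b\in V_-$.
\item Pick a regressive trajectory $g_0=a^{-1},g_1,\dots,g_{m+1}$ in $V_+$ and put $c_k=g_{m+1-k}$.
\item The new solution $(c_ke_k)_{k\le m+1}$ stays in $V$ because $V$ is a group, and its last term is $a^{-1}ab=b\in V_-$.
\end{itemize}
This is exactly the paper's step $y_{m+1}=y_ma_{m+1}^{-1}$, written in your error coordinates. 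No smaller or normal $W$, no prescribed unstable targets, and no backward lifting of stable elements are needed.
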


For compact metrizable groups and automorphisms,
Theorem~\ref{thm:main-tdlc} goes back to Aoki; in the compact automorphism
case, a standard countable-quotient argument also removes metrizability. The
theorem below passes beyond these compact automorphism cases to arbitrary
locally compact groups and arbitrary continuous endomorphisms, without a
countability assumption. As noted above, Reid's type~(F) theory and Wibmer's symbolic
model already imply important profinite endomorphism subclasses. We have not
located the full formulation in the earlier literature. Thus, for totally
disconnected locally
compact groups, shadowing is not an additional hyperbolicity assumption;
it is a structural consequence of compact open subgroups and the tidying
procedure. The tidy-above decomposition plays the role of a
zero-dimensional substitute for the stable--unstable correction mechanism
familiar from hyperbolic dynamics. In this setting, topological
hyperbolicity reduces to expansiveness: a continuous endomorphism is
topologically expanding if and only if it is positively expansive, and an
automorphism is topologically Anosov if and only if it is expansive.

The conclusion that topological Anosov dynamics reduces to
expansiveness should be viewed together with the structure theories of
Kitchens--Schmidt \cite{KS1989} and Kitchens \cite{Kitchens2021}. In the totally disconnected
locally compact category, shadowing contributes no further restriction;
the substantive dynamical structure is therefore encoded by
expansiveness itself.

Finally, we revisit Aoki's compactness argument. In the metrizable
setting, Aoki proved that topological mixing together with the pseudo-orbit
tracing property forces compactness
\cite[Proposition~3]{AokiDense1985}, and
that the existence of a dense orbit implies both topological mixing and the
pseudo-orbit tracing property \cite[Proposition~4]{AokiDense1985}.
Incorporating
the latter proposition weakens the hypothesis in our compactness
consequence from topological mixing to the existence of a dense orbit.
Moreover, Theorem~\ref{thm:main-tdlc} makes the shadowing assertion in Aoki's
Proposition~4 automatic, so the substantial part of its proof devoted to
establishing the pseudo-orbit tracing property is no longer needed. A
compact quotient argument then removes metrizability. Previts and Wu also
reproved Aoki's closely related Haar-ergodic compactness theorem using
Willis' structure theory \cite{PrevitsWu2003}. The compactness phenomenon is
therefore prior work; the point here is the automatic-shadowing input and
the explicit compact-quotient reduction and non-metrizable mixing argument.

\section{The Lie Case}\label{Lie}

In this section we study shadowing for endomorphisms of Lie groups. We use
the standard fact that every continuous homomorphism between
finite-dimensional Lie groups is smooth (indeed, analytic), so its
differential at the identity is well defined. The main goal is to prove
that shadowing is completely determined by this induced linear map on the
Lie algebra: an endomorphism $\alpha$ has shadowing if and only if
$\di\alpha$ is hyperbolic.

We begin with a general compactness observation. For locally compact
groups, finite shadowing is already equivalent to shadowing. This will be
useful later, since the converse direction of the Lie group characterization
is proved by passing from shadowing of $\alpha$ to local finite shadowing of
$\di\alpha$. The local compactness assumption is essential here, as the
following example shows.

\begin{lemma}\label{ShadFin}
Let $G$ be a locally compact group and $\alpha: G\to G$ be a continuous endomorphism. Then $\alpha$ has shadowing if and only if it has finite shadowing.
\end{lemma}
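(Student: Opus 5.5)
The plan is to treat the two implications separately. One is trivial; the other is a compactness argument on the set of shadowing points.

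For the easy direction, suppose $\alpha$ has shadowing. Any finite $V$-pseudo-orbit $(x_0,\dots,x_N)$ extends to a one-sided infinite one by setting $x_{k+1}=\alpha(x_k)$ for $k\ge N$, since then $\alpha(x_k)^{-1}x_{k+1}=e\in V$. A point that $U$-shadows the infinite extension also $U$-shadows the finite piece.

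For the converse, fix a neighbourhood $U$ of $e$. Choose a compact neighbourhood $W$ of $e$ with $W\subseteq U$; this is where local compactness enters. Apply finite shadowing with $\operatorname{int}W$ in place of $U$ to get $V$. Now let $(x_k)_{k\ge0}$ be an infinite $V$-pseudo-orbit. For each $N$ define
\[
K_N=\{x\in G:\ \alpha^k(x)^{-1}x_k\in W \text{ for } 0\le k\le N\}.
\]
By finite shadowing applied to $(x_0,\dots,x_N)$, each $K_N$ is nonempty. Each $K_N$ is closed, because $\alpha$ is continuous and $W$ is closed. The sets $K_N$ decrease in $N$. Moreover,
\[
K_N\subseteq K_0=\{x: x^{-1}x_0\in W\}=x_0W^{-1},
\]
and $x_0W^{-1}$ is compact. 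So $(K_N)$ is a decreasing sequence of nonempty closed subsets of a compact set, and $\bigcap_N K_N\neq\emptyset$. Any point $x$ in the intersection satisfies $\alpha^k(x)^{-1}x_k\in W\subseteq U$ for all $k$, so it $U$-shadows the whole pseudo-orbit.

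The main point needing care is the choice of $W$: it must be closed, so that each $K_N$ is closed, and compact, so that all $K_N$ lie in a fixed compact translate. Both requirements are met by passing from $U$ to a compact neighbourhood inside it before invoking finite shadowing. No metrizability is needed, because the argument uses only the finite intersection property. The claim that local compactness is essential is not part of the statement and needs no argument here.
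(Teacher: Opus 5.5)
Your proof is correct and is essentially the paper's argument: the paper also chooses a compact neighbourhood inside $U$, gets nonempty closed sets $K_n$ from finite shadowing, and concludes by compactness of a translate of that neighbourhood. The only cosmetic difference is that the paper takes the compact neighbourhood $C$ to be symmetric, so that $K_n=\bigcap_{i\le n}\alpha^{-i}(x_iC)\subseteq x_0C$, whereas you compute $K_0=x_0W^{-1}$ directly and so do not need symmetry.
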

\begin{proof}
Only finite shadowing $\Rightarrow$ shadowing requires proof. Let $U_0$ be
a neighbourhood of $e$. By local compactness, there is a compact symmetric
neighbourhood $C$ of $e$ such that $C\subseteq U_0$. Choose a neighbourhood
$V$ of $e$ such that every finite $V$-pseudo-orbit is $C$-shadowed by some
point.

Let $(x_n)_{n\geq 0}$ be an infinite $V$-pseudo-orbit. For each $n\geq 0$,
put
\[
K_n=\bigcap_{i=0}^n\alpha^{-i}(x_iC).
\]
The choice of $V$ implies that $K_n$ is non-empty. Moreover, $K_n$ is a
closed subset of the compact set $x_0C$, and hence is compact. The sequence
$(K_n)_{n\geq 0}$ is decreasing, so
\[
K:=\bigcap_{n\geq 0}K_n\neq\varnothing.
\]
If $y\in K$, then $\alpha^i(y)\in x_iC$ for every $i\geq 0$, and therefore
$\alpha^i(y)^{-1}x_i\in C=C^{-1}\subseteq U_0$. Thus $y$ $U_0$-shadows
$(x_n)$, and $\alpha$ has shadowing.
\end{proof}

\begin{lemma}\label{lem:one-two-sided}
Let $G$ be a locally compact group and let
$\alpha\in\operatorname{Aut}(G)$. Then $\alpha$ has shadowing if and only if
it has two-sided shadowing.
\end{lemma}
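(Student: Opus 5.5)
We prove the two implications separately. Both directions reduce to finite (or half-infinite) pseudo-orbits, handled by the equivariance of the left uniformity under $\alpha$ and by the compactness argument of Lemma~\ref{ShadFin}.

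\emph{Two-sided shadowing $\Rightarrow$ shadowing.} Let $U$ be given and let $V$ be the neighbourhood supplied by two-sided shadowing for $U$. Given a one-sided $V$-pseudo-orbit $(x_k)_{k\ge0}$, extend it to the left by setting $x_{-k}=\alpha^{-k}(x_0)$ for $k\geq 1$. Then $\alpha(x_{k})^{-1}x_{k+1}=e\in V$ for $k<0$, so the extended sequence is a two-sided $V$-pseudo-orbit. A point that $U$-shadows it in particular $U$-shadows the original sequence for $k\ge0$.

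\emph{Shadowing $\Rightarrow$ two-sided shadowing.} By Lemma~\ref{ShadFin} we may use finite shadowing, and the key device is a shift of indices. Let $U_0$ be given and choose a compact symmetric neighbourhood $C\subseteq U_0$ of $e$. Let $V$ be such that every finite $V$-pseudo-orbit is $C$-shadowed. Fix a two-sided $V$-pseudo-orbit $(x_k)_{k\in\mathbb Z}$ and $N\ge0$. The finite sequence $(x_{-N},\dots,x_N)$, reindexed as $y_j=x_{j-N}$ for $0\le j\le 2N$, is a finite $V$-pseudo-orbit. Hence some $z$ satisfies $\alpha^j(z)\in y_jC$ for all $0\le j\le 2N$. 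Put $w=\alpha^N(z)$, which is legitimate because $\alpha$ is an automorphism and $\alpha^{k}(w)=\alpha^{k+N}(z)$ for all $k$. Then $\alpha^k(w)\in x_kC$ for $|k|\le N$, so
\[
L_N:=\bigcap_{|k|\le N}\alpha^{-k}(x_kC)\neq\varnothing .
\]
Each $L_N$ is closed, since $\alpha$ is a homeomorphism, and is contained in the compact set $x_0C$. The sets $L_N$ decrease in $N$, so by compactness $\bigcap_N L_N\neq\varnothing$. Any point $x$ in this intersection satisfies $\alpha^k(x)^{-1}x_k\in C^{-1}=C\subseteq U_0$ for all $k\in\mathbb Z$, which is two-sided shadowing.

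The only delicate point is the shift $w=\alpha^N(z)$. It converts a forward-shadowing point for a window starting at index $-N$ into a point indexed from $0$, and this step genuinely uses invertibility of $\alpha$ and continuity of $\alpha^{-1}$. Beyond this, the argument is the same compactness intersection as in Lemma~\ref{ShadFin}.
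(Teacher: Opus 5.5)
Your proof is correct and follows the paper's argument. You shadow the window $x_{-N},\dots,x_N$ as a finite pseudo-orbit, shift by $w=\alpha^N(z)$, and use compactness of $x_0C$ for a compact symmetric $C\subseteq U_0$; the converse is the same backward extension by $\alpha^{-k}(x_0)$. The only difference is cosmetic: you get the limit point from the nested closed sets $L_N$, as in Lemma~\ref{ShadFin}, while the paper takes a convergent subnet of the points $z_n=\alpha^n(y_n)$ and uses continuity of each $\alpha^k$ together with closedness of $C$.
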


\begin{proof}
Assume first that $\alpha$ has shadowing. Let $U$ be a neighbourhood of
$e$. Choose a compact symmetric neighbourhood $C$ of $e$ contained in $U$,
and let $V$ witness $C$-shadowing for $\alpha$.

Let $(x_k)_{k\in\mathbb Z}$ be a two-sided $V$-pseudo-orbit. For each
$n\geq 1$, extend the finite segment
$x_{-n},\ldots,x_n$ to a one-sided $V$-pseudo-orbit by following the true
orbit of $x_n$ after time $n$. Hence there exists $y_n\in G$ such that
\[
\alpha^j(y_n)^{-1}x_{-n+j}\in C
\qquad (0\leq j\leq 2n).
\]
Put $z_n=\alpha^n(y_n)$. Then
\[
\alpha^k(z_n)^{-1}x_k\in C
\qquad (-n\leq k\leq n).
\]
In particular, $z_n\in x_0C$, which is compact. Passing to a convergent
subnet, say $z_{n_i}\to z$, and fixing $k\in\mathbb Z$, we obtain by
continuity and closedness of $C$ that
$\alpha^k(z)^{-1}x_k\in C\subseteq U$. Thus $z$ $U$-shadows the given
two-sided pseudo-orbit.

Conversely, let $(x_k)_{k\geq 0}$ be a one-sided pseudo-orbit for an
automorphism. Defining $x_{-k}=\alpha^{-k}(x_0)$ for $k\geq 1$ extends it
to a two-sided pseudo-orbit without introducing any additional error.
Therefore two-sided shadowing implies shadowing.
\end{proof}

\begin{example}
Let $G=H(\mathbb C)$ be the Fréchet space of entire functions endowed with
the compact-open topology, regarded as an abelian Polish group under addition.
For $\lambda\in\mathbb C$ with $|\lambda|>1$, define
\[
M_\lambda(f)=\lambda f,\qquad f\in H(\mathbb C).
\]
Then $M_\lambda$ is a continuous automorphism of the Polish group $G$, with
inverse $M_{\lambda^{-1}}$. By Bernardes and Peris \cite{BernardesPeris}, $M_\lambda$ has the
finite shadowing property but does not have the shadowing property.
\end{example}

We now turn to Lie groups. In classical hyperbolic dynamics, a hyperbolic
linear map is usually understood to be a linear automorphism whose spectrum
is disjoint from the unit circle. This convention is adapted to
diffeomorphisms and, more generally, to locally invertible dynamics.
However, the Lie group endomorphisms considered here need not be local
diffeomorphisms, and their differentials may be singular. We therefore
extend the usual terminology to arbitrary finite-dimensional linear
endomorphisms by retaining the same spectral condition and allowing zero
as an eigenvalue. Thus, in the invertible case, our definition agrees with
the standard one.

\begin{definition}
Let $E$ be a finite-dimensional real vector space, and let $T:E\to E$ be a
linear endomorphism. We say that $T$ is \emph{hyperbolic} if
\[
\sigma(T)\cap S^1=\varnothing,
\]
where $\sigma(T)$ is the spectrum of the complexification of $T$ and
$S^1=\{z\in\mathbb C:|z|=1\}$. Equivalently, every eigenvalue of $T$ has
modulus either strictly smaller than $1$ or strictly larger than $1$.
In particular, zero is allowed as an eigenvalue.
\end{definition}

The next lemma is the technical core of the sufficiency direction. It says that, when the differential is hyperbolic, every sufficiently small sequence of local errors can be corrected by a bounded sequence near the identity.

\begin{lemma}\label{lem:local-error-equation}
Let $G$ be a Lie group with Lie algebra $\mathfrak g$, let $\alpha:G\to G$ be a continuous endomorphism,
and put $A=\di\alpha:\mathfrak g\to\mathfrak g$. If $A$ is hyperbolic, then
for every neighbourhood $U$ of $e$ in $G$, there exists a neighbourhood
$V$ of $e$ such that, for every sequence $(e_n)_{n\geq 0}$ in $V$, the
equation
\begin{equation}\label{eq:error-equation}
d_{n+1}=\alpha(d_n)e_n,\qquad n\geq 0,
\end{equation}
has a solution $(d_n)_{n\geq 0}$ contained in $U$.
\end{lemma}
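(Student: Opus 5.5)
Pass to exponential coordinates and solve the error equation as a fixed-point problem in a Banach space of bounded sequences. Fix a small ball $B_r\subseteq\mathfrak g$ on which $\exp$ is a diffeomorphism onto a neighbourhood of $e$ contained in $U$. Write $d_n=\exp(u_n)$ and $e_n=\exp(w_n)$. Since $\alpha\circ\exp=\exp\circ A$, the equation \eqref{eq:error-equation} becomes
\[
u_{n+1}=\Phi(Au_n,w_n),\qquad \Phi(X,Y)=\log(\exp X\exp Y),
\]
valid as long as all quantities stay small. By the Baker--Campbell--Hausdorff formula, $\Phi(X,Y)=X+Y+R(X,Y)$, where $R$ is smooth with $R(0,0)=0$, $\partial_X R(0,0)=0$ and $\partial_Y R(0,0)=0$. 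In particular $R$ is Lipschitz in $X$ with constant $\eta(\rho)\to0$ on balls of radius $\rho$, and $\|R(X,Y)\|\le c\|X\|\|Y\|+o(\|Y\|)$.

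The key linear step uses hyperbolicity to split $\mathfrak g=E^s\oplus E^u$ into $A$-invariant subspaces, with $\sigma(A|_{E^s})$ inside the open unit disc (this includes the generalized kernel) and $\sigma(A|_{E^u})$ outside the closed disc. Then $A|_{E^u}$ is invertible. Choose an adapted norm in which $\|A|_{E^s}\|\le\lambda<1$ and $\|(A|_{E^u})^{-1}\|\le\lambda$. For a bounded sequence $f=(f_n)_{n\ge0}$ in $\mathfrak g$, the linear equation $u_{n+1}=Au_n+f_n$ has a bounded solution given by an explicit operator $L$:
\[
(Lf)^s_n=\sum_{k<n}A_s^{\,n-1-k}f^s_k,\qquad (Lf)^u_n=-\sum_{k\ge n}A_u^{-(k-n+1)}f^u_k .
\]
Here $u_0^s=0$ and $u_0^u$ is determined by the unstable sum. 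This gives $\|L\|\le C/(1-\lambda)$ on $\ell^\infty(\mathbb N,\mathfrak g)$. Singularity of $A$ causes no trouble, because only $A_s$ is iterated forward and only $A_u$ is inverted. This is exactly where hyperbolicity in the generalized sense, with zero eigenvalues allowed, enters.

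Now set up the nonlinear fixed point. The sequence $u$ must solve $u_{n+1}=Au_n+w_n+R(Au_n,w_n)$, so I define
\[
T(u)=L\bigl(w+R(Au,w)\bigr)
\]
on the closed ball $\{\|u\|_\infty\le\rho\}$ of $\ell^\infty$. The map $T$ sends the ball into itself when
\[
\|L\|\bigl(\|w\|_\infty+\sup\|R\|\bigr)\le\rho ,
\]
and it is a contraction when $\|L\|\,\eta\bigl(\|A\|\rho+\|w\|_\infty\bigr)<1$. So I first choose $\rho$ small, with $\rho<r$ and the Lipschitz constant small enough that $\|L\|\eta<1/2$. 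Then I choose $\|w\|_\infty\le\delta$ with $\delta\ll\rho$; this gives $\|R(Au,w)\|\le\eta\|A\|\rho+O(\delta)$, and the ball is invariant. Finally I take $V=\exp(B_\delta)$. The Banach fixed point theorem yields $u$, and $d_n=\exp(u_n)\in U$ solves \eqref{eq:error-equation}.

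The main obstacle is the bookkeeping in this last step. Since $\partial_X R(0,0)=0$ only locally, I need a uniform estimate on a whole neighbourhood of $(0,0)$. I also need every $Au_n$ and $w_n$ to stay in the domain where the identity $\Phi(X,Y)=\log(\exp X\exp Y)$ holds, so that the fixed point really solves the group equation. I would handle this by shrinking $\rho$ and $\delta$ so that $\|A\|\rho+\delta$ stays inside the BCH convergence and injectivity radius.
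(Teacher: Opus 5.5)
Your proposal is correct and takes essentially the same route as the paper: exponential coordinates, the hyperbolic splitting with an adapted norm, the operator that sums forward on $E^{\mathrm s}$ and backward with $A_{\mathrm u}^{-1}$ on $E^{\mathrm u}$, and the Banach fixed point theorem on a small ball of $\ell^\infty$. Your nonlinear term $w_n+R(Au_n,w_n)$ is exactly the paper's $\psi(\xi_n,a_n)$ for the chart $\theta=\log$. The only slip is in the bookkeeping: the Lipschitz constant of $u\mapsto R(Au,w)$ is $\|A\|\,\eta$, so your smallness condition should read $\|L\|\,\|A\|\,\eta<1/2$. This is harmless because $\eta(\rho)\to 0$, and since $R(0,Y)=0$ you get $\|R(Au,w)\|\le\|A\|\,\eta\,\rho$ directly.
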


\begin{proof}
Choose an exponential chart $\theta:O\to E$ from a neighbourhood $O$ of
$e$ in $G$ onto a neighbourhood of $0$ in $E=\mathfrak g$, with
$\theta(e)=0$ and $(D\theta)(e)=\operatorname{id}_{\mathfrak g}$. We shall
shrink $O$ several times below.

In these coordinates, put
\[
\mathcal D=\{(\xi,a)\in\theta(O)\times\theta(O):
\alpha(\theta^{-1}(\xi))\theta^{-1}(a)\in O\}.
\]
Then $\mathcal D$ is an open neighbourhood of $(0,0)$. Define the smooth
map $F:\mathcal D\to E$ by
\[
F(\xi,a)=\theta\bigl(\alpha(\theta^{-1}(\xi))\theta^{-1}(a)\bigr),
\]
so that $F(0,0)=0$ and $D_\xi F(0,0)=A$. Write
\[
F(\xi,a)=A\xi+\psi(\xi,a).
\]
Thus $\psi(0,0)=0$ and $D_\xi\psi(0,0)=0$.

Since $A$ is hyperbolic, there is an $A$-invariant decomposition
\begin{equation}\label{eq:splitting}
E=E^{\mathrm{s}}\oplus E^{\mathrm{u}},
\end{equation}
where the spectrum of $A_{\mathrm{s}}=A|_{E^{\mathrm{s}}}$ lies inside the
open unit disk in $\mathbb{C}$, and $A_{\mathrm{u}}=A|_{E^{\mathrm{u}}}$ is invertible with
spectrum outside the closed unit disk. Choose norms on
$E^{\mathrm{s}}$ and $E^{\mathrm{u}}$, and use the product norm
\[
\|\xi\|=\max\{\|\xi^{\mathrm{s}}\|,\|\xi^{\mathrm{u}}\|\}
\]
on $E$, so that, for some $0<\lambda<1$,
\begin{equation}\label{eq:lambda-estimate}
\|A_{\mathrm{s}}\|\leq\lambda,\qquad
\|A_{\mathrm{u}}^{-1}\|\leq\lambda.
\end{equation}

We next choose the constants used in the contraction argument. Since
$D_\xi\psi(0,0)=0$ and $D_\xi\psi$ is continuous as a function of the pair
$(\xi,a)$, we may make the Lipschitz constant of $\psi$ in the first
variable arbitrarily small, uniformly for all sufficiently small values of
the second variable. More precisely, choose $L>0$ with
\begin{equation}\label{eq:L-small}
\frac{L}{1-\lambda}<\frac14.
\end{equation}
Then choose $\rho>0$ so small that
\[
\overline{B_\rho(0)}\times\overline{B_\rho(0)}\subseteq\mathcal D,
\qquad
\theta^{-1}(\overline{B_\rho(0)})\subseteq U,
\]
and
\[
\|D_\xi\psi(\xi,a)\|\leq L
\]
whenever $\|\xi\|\leq\rho$ and $\|a\|\leq\rho$. Since the closed ball is
convex, the mean value estimate gives, for
$\|\xi\|,\|\zeta\|\leq\rho$ and $\|a\|\leq\rho$,
\begin{equation}\label{eq:psi-lipschitz}
\|\psi(\xi,a)-\psi(\zeta,a)\|\leq L\|\xi-\zeta\|.
\end{equation}

Since $\psi(0,0)=0$, choose $0<\eta<\rho$ such that
\begin{equation}\label{eq:psi-zero-small}
\|\psi(0,a)\|\leq \frac{(1-\lambda)\rho}{4}
\end{equation}
whenever $\|a\|\leq\eta$. Combining \eqref{eq:L-small},
\eqref{eq:psi-lipschitz},  and
\eqref{eq:psi-zero-small}, we obtain, for all $\|\xi\|\leq\rho$ and
$\|a\|\leq\eta$,
\begin{equation}\label{eq:psi-bound}
\|\psi(\xi,a)\|
\leq
\|\psi(\xi,a)-\psi(0,a)\|+\|\psi(0,a)\|
\leq
L\rho+\frac{(1-\lambda)\rho}{4}
\leq
\frac{(1-\lambda)\rho}{2}.
\end{equation}

Let $V=\theta^{-1}(B_\eta(0))$. Now let $(e_n)_{n\geq 0}$ be a sequence
in $V$, and put $a_n=\theta(e_n)$. We shall solve
\begin{equation}\label{eq:coordinate-recurrence}
\xi_{n+1}=A\xi_n+\psi(\xi_n,a_n)
\end{equation}
with $\|\xi_n\|\leq\rho$ for all $n\geq 0$. Then
$d_n=\theta^{-1}(\xi_n)$ will solve \eqref{eq:error-equation} and will
belong to $U$.

Let $\mathcal X$ be the complete metric space of all bounded sequences
$\xi=(\xi_n)_{n\geq 0}$ in $E$ satisfying $\|\xi\|_\infty\leq\rho$. For
$\xi\in\mathcal X$, write
\[
\psi(\xi_n,a_n)
=
\psi^{\mathrm{s}}(\xi_n,a_n)+\psi^{\mathrm{u}}(\xi_n,a_n)
\]
according to the decomposition \eqref{eq:splitting}. Define
$\Gamma:\mathcal X\to \ell^\infty(E)$ by
\begin{align}
(\Gamma\xi)_n^{\mathrm{s}}
&=
\sum_{k=0}^{n-1}A_{\mathrm{s}}^{\,n-1-k}
\psi^{\mathrm{s}}(\xi_k,a_k),
\label{eq:Gamma-stable}\\
(\Gamma\xi)_n^{\mathrm{u}}
&=
-\sum_{k=n}^{\infty}A_{\mathrm{u}}^{\,n-1-k}
\psi^{\mathrm{u}}(\xi_k,a_k).
\label{eq:Gamma-unstable}
\end{align}
The second series converges because
$\|A_{\mathrm{u}}^{-1}\|\leq\lambda<1$.

We now check that $\Gamma$ maps $\mathcal X$ into itself. Put
\[
M_\xi=\sup_{n\geq 0}\|\psi(\xi_n,a_n)\|.
\]
By \eqref{eq:psi-bound}, $M_\xi\leq (1-\lambda)\rho/2$. For the stable
component, using \eqref{eq:lambda-estimate}, we have
\[
\|(\Gamma\xi)_n^{\mathrm{s}}\|
\leq
\sum_{k=0}^{n-1}\lambda^{n-1-k}M_\xi
\leq
\frac{M_\xi}{1-\lambda}.
\]
For the unstable component,
\[
\|(\Gamma\xi)_n^{\mathrm{u}}\|
\leq
\sum_{k=n}^{\infty}\lambda^{k+1-n}M_\xi
\leq
\frac{M_\xi}{1-\lambda}.
\]
Since the norm on $E$ is the maximum of the stable and unstable norms, it
follows that
\[
\|\Gamma\xi\|_\infty
\leq
\frac{M_\xi}{1-\lambda}
\leq
\frac{\rho}{2}
<\rho.
\]
Thus $\Gamma(\mathcal X)\subseteq\mathcal X$.

Next we show that $\Gamma$ is a contraction. Let $\xi,\zeta\in\mathcal X$
and put
\[
M_{\xi,\zeta}
=
\sup_{n\geq 0}
\|\psi(\xi_n,a_n)-\psi(\zeta_n,a_n)\|.
\]
By \eqref{eq:psi-lipschitz},
$M_{\xi,\zeta}\leq L\|\xi-\zeta\|_\infty$. Again, the estimates in \eqref{eq:lambda-estimate} give
\[
\|(\Gamma\xi-\Gamma\zeta)_n^{\mathrm{s}}\|
\leq
\frac{M_{\xi,\zeta}}{1-\lambda},
\qquad
\|(\Gamma\xi-\Gamma\zeta)_n^{\mathrm{u}}\|
\leq
\frac{M_{\xi,\zeta}}{1-\lambda}.
\]
Therefore,
\[
\|\Gamma\xi-\Gamma\zeta\|_\infty
\leq
\frac{L}{1-\lambda}\|\xi-\zeta\|_\infty
<
\frac14\|\xi-\zeta\|_\infty.
\]
Hence $\Gamma$ is a contraction on $\mathcal X$. By the Banach fixed point
theorem, $\Gamma$ has a fixed point $\xi=(\xi_n)_{n\geq 0}$ in $\mathcal X$.

It remains to check that this fixed point solves
\eqref{eq:coordinate-recurrence}. For the stable component, the definition
\eqref{eq:Gamma-stable} gives
\[
\xi_{n+1}^{\mathrm{s}}
=
A_{\mathrm{s}}\xi_n^{\mathrm{s}}
+
\psi^{\mathrm{s}}(\xi_n,a_n).
\]
For the unstable component, shifting the series in
\eqref{eq:Gamma-unstable} gives
\[
\xi_{n+1}^{\mathrm{u}}
=
A_{\mathrm{u}}\xi_n^{\mathrm{u}}
+
\psi^{\mathrm{u}}(\xi_n,a_n).
\]
Thus
\[
\xi_{n+1}=A\xi_n+\psi(\xi_n,a_n)=F(\xi_n,a_n).
\]
Consequently, with $d_n=\theta^{-1}(\xi_n)$, we have
\[
d_{n+1}=\alpha(d_n)e_n
\]
for all $n\geq 0$. Since $\|\xi_n\|\leq\rho$ and
$\theta^{-1}(\overline{B_\rho(0)})\subseteq U$, the sequence
$(d_n)_{n\geq 0}$ is
contained in $U$. This completes the proof.
\end{proof}

We now use the local error equation to trace pseudo-orbits of the group endomorphism.

\begin{proposition}\label{HypSha}
Let $G$ be a Lie group, and let $\alpha:G\to G$ be a continuous
endomorphism. If $\di \alpha$ is hyperbolic, then $\alpha$ has shadowing.
\end{proposition}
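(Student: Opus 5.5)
The plan is to reduce shadowing directly to Lemma~\ref{lem:local-error-equation}, looking for a shadowing point of the form $x=x_0d_0^{-1}$ (or a similar correction) and solving for the corrections $d_n$ via the error equation. Fix a neighbourhood $U$ of $e$, and let $V$ be the neighbourhood provided by Lemma~\ref{lem:local-error-equation} for $U^{-1}$ (replacing $U$ by a symmetric neighbourhood inside it, we may simply assume $U=U^{-1}$). Let $(x_n)_{n\ge0}$ be a $V$-pseudo-orbit and set $e_n=\alpha(x_n)^{-1}x_{n+1}\in V$.

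We want $y$ with $\alpha^n(y)^{-1}x_n\in U$. Define $d_n=\alpha^n(y)^{-1}x_n$. Using only the homomorphism property, we compute
\[
d_{n+1}=\alpha^{n+1}(y)^{-1}x_{n+1}=\alpha\bigl(\alpha^n(y)^{-1}x_n\bigr)\,\alpha(x_n)^{-1}x_{n+1}=\alpha(d_n)e_n .
\]
So the deviations of any orbit from the pseudo-orbit automatically satisfy the error equation \eqref{eq:error-equation}. Conversely, suppose $(d_n)$ is any solution of \eqref{eq:error-equation} and put $y=x_0d_0^{-1}$. By induction, $\alpha^n(y)^{-1}x_n=d_n$ for all $n$. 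The base case $n=0$ holds by construction, and the inductive step is exactly the displayed identity read backwards: if $\alpha^n(y)^{-1}x_n=d_n$, then $\alpha^{n+1}(y)^{-1}x_{n+1}=\alpha(d_n)e_n=d_{n+1}$.

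Therefore, applying Lemma~\ref{lem:local-error-equation} to the sequence $(e_n)$ in $V$ gives a solution $(d_n)\subseteq U$. The point $y=x_0d_0^{-1}$ then satisfies $\alpha^n(y)^{-1}x_n=d_n\in U$ for every $n$, so $y$ $U$-shadows $(x_n)$, and $\alpha$ has shadowing.

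There is no real obstacle: all the analytic work is in Lemma~\ref{lem:local-error-equation}. The only points to check are the order of multiplication, which must match the left-uniformity conventions $\alpha(x_k)^{-1}x_{k+1}$ and $\alpha^k(x)^{-1}x_k$, and that $d_0$ is free in the lemma, so the initial point $y$ can be chosen to fit it.
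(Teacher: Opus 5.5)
Your proposal is correct and is essentially the paper's proof: take $V$ from Lemma~\ref{lem:local-error-equation}, write $e_n=\alpha(x_n)^{-1}x_{n+1}$, solve $d_{n+1}=\alpha(d_n)e_n$ in $U$, and set $y=x_0d_0^{-1}$, so that by induction $\alpha^n(y)^{-1}x_n=d_n\in U$. The detour through a symmetric neighbourhood and $U^{-1}$ is unnecessary, since the lemma already yields $d_n\in U$ directly, but it does no harm.
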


\begin{proof}
Let $U$ be a neighbourhood of $e$ in $G$. By Lemma~\ref{lem:local-error-equation},
choose a neighbourhood $V$ of $e$ such that every sequence $(e_n)$ in $V$
admits a solution $(d_n)$ in $U$ of
$
d_{n+1}=\alpha(d_n)e_n.
$

Let $(x_n)_{n\geq 0}$ be a $V$-pseudo-orbit for $\alpha$. Write
$
x_{n+1}=\alpha(x_n)e_n
$
with $e_n\in V$. Choose a solution $(d_n)$ in $U$ of the above 
equation, and put
$
y=x_0d_0^{-1}.
$
We claim that
\[
x_n=\alpha^n(y)d_n
\]
for all $n\geq 0$. This is true for $n=0$. If it holds for $n$, then
\[
x_{n+1}
=
\alpha(x_n)e_n
=
\alpha(\alpha^n(y)d_n)e_n
=
\alpha^{n+1}(y)\alpha(d_n)e_n
=
\alpha^{n+1}(y)d_{n+1}.
\]
Hence the claim follows by induction. Therefore
\[
\alpha^n(y)^{-1}x_n=d_n\in U
\]
for every $n\geq 0$. Thus $(x_n)$ is $U$-shadowed by $y$.
In other words, $\alpha$ has shadowing.
\end{proof}

For the converse implication, we shall need a linear shadowing
criterion. For finite-dimensional linear automorphisms, the equivalence
between hyperbolicity and shadowing is classical and goes back at least
to Ombach \cite{Ombach1994}. In fact, Bernardes, Cirilo, Darji,
Messaoudi and Pujals proved the corresponding one-sided result for
arbitrary, not necessarily invertible, finite-dimensional linear
operators \cite[Lemma~45]{BernardesEtAl2018}. Moreover, shadowing and
finite shadowing coincide for every bounded linear operator on a Banach
space \cite{BernardesPeris}. The additional condition needed in our
argument is the following localized finite version, which allows us to
pass from a Lie group endomorphism to its differential using only a
neighbourhood of the identity. The precise localized implication needed here does not seem to have
been recorded in this form, so we include the argument.

For a linear endomorphism $A:E\to E$, we say that $A$ has
\emph{local finite shadowing at the origin} if there exists a
neighbourhood $W$ of $0$ such that, for every $\varepsilon>0$, there
exists $\delta>0$ such that every finite $\delta$-pseudo-orbit of $A$
contained in $W$ is $\varepsilon$-shadowed by some point of $E$. Thus finite
shadowing implies local finite shadowing at the origin.

\begin{lemma}\label{LineShadEqu}
Let $E$ be a finite-dimensional real normed vector space, and let
$A:E\to E$ be a linear endomorphism. Then the following conditions are
equivalent:
\begin{enumerate}
\item $A$ has shadowing;
\item $A$ has finite shadowing;
\item $A$ has local finite shadowing at the origin;
\item $A$ is hyperbolic.
\end{enumerate}
\end{lemma}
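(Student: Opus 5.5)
We will prove the cycle of implications (4)$\Rightarrow$(1)$\Rightarrow$(2)$\Rightarrow$(3)$\Rightarrow$(4).

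\textbf{(4)$\Rightarrow$(1).} Regard $E$ as an abelian Lie group. Then $A$ is a continuous endomorphism with $\di A=A$. Since $\psi(\xi,a)=a$ is affine here, the proof of Lemma~\ref{lem:local-error-equation} is even simpler. Proposition~\ref{HypSha} gives shadowing directly.

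\textbf{(1)$\Rightarrow$(2)$\Rightarrow$(3).} These are immediate. For (1)$\Rightarrow$(2), extend a finite pseudo-orbit by its true orbit. The implication (2)$\Rightarrow$(3) is trivial, as already remarked before the statement.

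\textbf{(3)$\Rightarrow$(4).} This is the main step, and we argue by contradiction. Suppose $A$ has an eigenvalue $\mu$ with $|\mu|=1$. Let $W\supseteq B_r(0)$ be the neighbourhood from (3), and fix $\varepsilon<r/10$ with its $\delta$.

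Take a real $A$-invariant subspace $P$ of dimension $1$ or $2$ on which $A$ acts as an isometry for a suitable norm: either $\pm1$ on a line, or a rotation on a plane. Let $Q$ be an $A$-invariant complement coming from the real Jordan decomposition. Refine this so that $P$ is the bottom of a Jordan chain: let $P'$ be the generalized eigenspace of $\mu,\bar\mu$, take a complementary invariant subspace, and let $\pi:E\to P$ be an equivariant projection. The choice we commit to is a projection onto the \emph{top} of the Jordan block, i.e. the quotient $E\to E/F$ where $F$ is the invariant subspace, such that $A$ induces an isometry $R$ on the $1$- or $2$-dimensional quotient $P=E/F$. This quotient map $\pi$ is linear, intertwines $A$ with $R$, and has $\|\pi\|\le C$.

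Now build a slow pseudo-orbit. Set $x_0=0$ and $x_{n+1}=Ax_n+\delta' v_n$ for $n<N$, with $\delta'<\delta$. The kick $v_n$ is chosen so that the $P$-component drifts outward: $\pi(x_{n+1})=R\pi(x_n)+\delta'\pi(v_n)$, with $\pi(v_n)$ the unit vector in the direction of $R\pi(x_n)$, or of a fixed direction initially. Then $\|\pi(x_n)\|=n\delta'$ grows linearly.

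The obstacle is keeping the whole pseudo-orbit inside $W$ while the $P$-component reaches size $\sim r/2$. We will arrange that the kicks $v_n$ lie in a complementary subspace $S$ to $F$ that is mapped back toward $S$ under $A$. The plan is to reduce first to $A$ restricted to the generalized eigenspace $E_\mu$. The components of $x_n$ in the other spectral subspaces stay $0$ because we kick only in $E_\mu$, and in $E_\mu$ we kick only along a vector $w$ in the top of a Jordan chain. Jordan nilpotent parts then produce polynomial growth in lower components, which is bad. To avoid this, we choose $w$ an honest eigenvector, or the real $2$-plane of an eigenvector pair, with $F=0$. Then $Ax_n=Rx_n$ exactly on $P$, and the orbit stays in $P$ with norm $n\delta'\le r/2$ for $N=\lfloor r/(2\delta')\rfloor$.

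The contradiction comes from the shadowing point $y$ with $\|A^ny-x_n\|<\varepsilon$ for $0\le n\le N$. Project $y$ to $P$ via an invariant projection $\pi$ onto $P$ along an invariant complement. Such a complement exists for the eigen-plane in the dual sense: use the left eigenvector, i.e. choose $\pi$ to be an invariant functional pair, which exists for the eigenvalue $\mu$ of $A^{T}$. The conjugate-transpose pairing gives an $A$-equivariant surjection $E\to\mathbb R$ or $\mathbb R^2$ intertwining $A$ with an isometry $R$.

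To make $\pi(x_n)$ grow, we kick along a vector $w$ with $\pi(w)\ne0$ rather than along an eigenvector. This reintroduces possible polynomial growth of $x_n$ off $\ker\pi$, so $x_n$ might leave $W$. We handle this by bounding $x_n$ directly. Write $x_n=\delta'\sum_k A^{n-1-k}w_k$, and on $E_\mu$ with Jordan size $m$ we get $\|x_n\|\lesssim \delta' n^{m}$. Take $\delta'$ so small and $N$ chosen so that $\delta' N^{m}\ll r$ while $\delta' N\gg \varepsilon$, which is possible. Then $\|R^n\pi(y)-\pi(x_n)\|\le C\varepsilon$ for all $n\le N$. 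Since $R$ is an isometry and $\pi(x_n)=\delta'\sum_k R^{n-1-k}\pi(w_k)$ with aligned kicks, we get $\|\pi(x_N)\|=N\delta'$. But $\|\pi(x_0)\|=0$ forces $\|\pi(y)\|\le C\varepsilon$, hence $\|R^N\pi(y)\|\le C\varepsilon$ and $\|\pi(x_N)\|\le 2C\varepsilon$. This contradicts $N\delta'\gg\varepsilon$.

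The delicate point we expect is exactly this balancing of linear drift in the isometric quotient against polynomial growth inside $W$. We will resolve it by the choice of $N$ in terms of $\delta'$.
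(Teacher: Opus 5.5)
\textbf{Verdict: genuine gap in (3)$\Rightarrow$(4).} Your implications (4)$\Rightarrow$(1)$\Rightarrow$(2)$\Rightarrow$(3) are fine and agree with the paper. The argument for (3)$\Rightarrow$(4), however, breaks down whenever every Jordan block of the unimodular eigenvalue $\mu$ has size $m\geq 2$, for instance $A(a,b)=(a+b,b)$ on $\mathbb R^2$.

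\textbf{Why the balancing fails.} Your claim ``$\delta' N^m\ll r$ while $\delta' N\gg\varepsilon$, which is possible'' is false. Here $\delta'<\delta$, and $\delta$ may be arbitrarily small for the fixed $\varepsilon$. From $\delta' N\geq 2C\varepsilon$ you get $\delta' N^m\geq 2C\varepsilon\,(2C\varepsilon/\delta')^{m-1}$, which tends to infinity as $\delta'\to 0$.

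This is not just a loose estimate; the strategy itself cannot work. In the example, $\pi(a,b)=b$ is the left-eigenvector projection. A $\delta$-pseudo-orbit satisfies $b_{n+1}=b_n+O(\delta)$ and $a_{n+1}=a_n+b_n+O(\delta)$. Suppose $b$ changes by $\beta$. Then there are about $\beta/(4\delta)$ consecutive steps on which $|b_n|\geq\beta/4$ with a fixed sign, so $a$ changes by roughly $\beta^2/(16\delta)$. Staying in $B_r(0)$ therefore forces $|\pi(x_n)-\pi(x_0)|=O(\sqrt{r\delta})$. So once $\delta$ is small, no pseudo-orbit inside $W$ moves the isometric coordinate by more than $2C\varepsilon$, and $\pi$ alone can never yield a contradiction.

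\textbf{Why the fallback fails.} Kicking along an honest eigenvector does not help either. When all blocks of $\mu$ are nontrivial, $\pi$ vanishes on the real span of the eigenvectors for $\mu,\bar\mu$, so $\pi(x_n)$ does not drift at all. Moreover, a linear drift along the eigenvector really is shadowed: for $\mu=1$, the true orbit $A^n(\delta' v_1)=n\delta' v_0+\delta' v_1$ follows $x_n=n\delta' v_0$ to within $\delta'\|v_1\|$.

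\textbf{The missing idea.} You need to detect non-hyperbolicity at the bottom of the Jordan chain, using the polynomial structure of true orbits rather than an equivariant projection. The paper does this as follows.
\begin{itemize}
\item Take $\ell$ to be the $v_0$-coefficient in a Jordan block of size $q$. For every $Y$, the map $i\mapsto\lambda^{-i}\ell(A_{\mathbb C}^iY)$ is a polynomial in $i$ of degree at most $q-1$.
\item Use the pseudo-orbit $Z_i=\tau\lambda^i(i/N)^q v_0$, taking twice the real part if $\lambda\notin\mathbb R$. It stays in a ball of radius $O(\tau)$ and has steps $O(\tau q/N)$.
\item It cannot be $\varepsilon_0$-shadowed once $\varepsilon_0$ is small compared with $\tau c/\|\ell\|$. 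The reason is that $t^q$ stays at uniform distance at least $c>0$ from all polynomials of degree at most $q-1$ on every sufficiently fine grid $\{i/N\}$.
\end{itemize}

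As written, your argument only covers the case where $\mu$ has a Jordan block of size one. In that case you can kick along that eigenvector with $F=0$. The orbit then stays in an invariant line or plane on which $A$ acts isometrically, and the linear drift $n\delta'$ does give the contradiction.
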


\begin{proof}
Proposition~\ref{HypSha}, applied to the additive Lie group $E$, gives
$(4)\Rightarrow(1)$, while $(1)\Rightarrow(2)\Rightarrow(3)$ is immediate.
It remains to prove $(3)\Rightarrow(4)$.

Suppose that $A$ has local finite shadowing.
We shall show that $A$ is hyperbolic. Arguing by contradiction, assume that
$A$ is not hyperbolic. Then the complexification $A_{\mathbb C}$ has an
eigenvalue $\lambda$ with $|\lambda|=1$.

Choose a Jordan block $B$ for $\lambda$ of size $q\geq 1$. Thus, in a suitable
basis $v_0,\ldots,v_{q-1}$ of this block, we have
\[
A_{\mathbb C}v_0=\lambda v_0,\qquad
A_{\mathbb C}v_j=\lambda v_j+v_{j-1}\quad (1\leq j<q).
\]
Extend this to a Jordan basis of $E_{\mathbb C}$. Define a complex linear
functional
\[
\ell:E_{\mathbb C}\to\mathbb C
\]
as follows: if
\[
Z=\sum_{j=0}^{q-1} z_jv_j+Z',
\]
where $Z'$ belongs to the span of all the other Jordan basis vectors, then
$
\ell(Z)=z_0.
$
Thus $\ell$ extracts the coefficient of $v_0$ in the chosen Jordan block
and vanishes on all other Jordan blocks.

We claim that, for every $Y\in E_{\mathbb C}$, the function
\[
i\mapsto \lambda^{-i}\ell(A_{\mathbb C}^iY)
\]
is the restriction to $\mathbb N$ of a polynomial in $i$ of degree at most
$q-1$. Indeed, write the component of $Y$ in $B$ as
\[
Y_B=\sum_{j=0}^{q-1} y_jv_j.
\]
Only this component contributes to $\ell(A_{\mathbb C}^iY)$. On $B$ we have
\[
A_{\mathbb C}=\lambda I+N,
\]
where
\[
Nv_0=0,\qquad Nv_j=v_{j-1}\quad (1\leq j<q).
\]
Hence
\[
A_{\mathbb C}^i
=
(\lambda I+N)^i
=
\sum_{k=0}^{q-1}\binom{i}{k}\lambda^{i-k}N^k
\]
on $B$, since $N^q=0$. Therefore
\[
\lambda^{-i}\ell(A_{\mathbb C}^iY)
=
\sum_{k=0}^{q-1}\binom{i}{k}\lambda^{-k}\ell(N^kY_B).
\]
The right-hand side is a polynomial in $i$ of degree at most $q-1$, because
each $\binom{i}{k}$ is a polynomial in $i$ of degree $k$.

Let $\varphi(t)=t^q$ on $[0,1]$. The space of complex polynomials of degree
at most $q-1$ is a finite-dimensional, hence closed, subspace of
$C([0,1],\mathbb C)$, and $\varphi$ does not belong to it. Consequently,
there exists $c>0$ such that
\begin{equation}\label{eq:distance-from-low-degree-polynomials}
\sup_{0\leq t\leq 1}|\varphi(t)-p(t)|\geq c
\end{equation}
for every complex polynomial $p$ of degree at most $q-1$. We shall use the following
discrete consequence. After decreasing $c$ if necessary, there exists
$N_0\geq 1$ such that, for every $N\geq N_0$ and every complex polynomial $p$ of
degree at most $q-1$,
\begin{equation}\label{eq:discrete-polynomial-separation}
\max_{0\leq i\leq N}|\varphi(i/N)-p(i/N)|\geq c.
\end{equation}
Indeed, if this failed, then for a sequence $N_j\to\infty$ one could find
polynomials $p_j$ of degree at most $q-1$ such that
\[
\max_{0\leq i\leq N_j}|\varphi(i/N_j)-p_j(i/N_j)|\to 0.
\]
The polynomials $p_j$ are bounded at $q$ separated grid points, for all
large $j$, hence their coefficients are bounded. Passing to a subsequence,
$p_j$ converges uniformly on $[0,1]$ to a polynomial $p$ of degree at most
$q-1$. Since the grids become dense in $[0,1]$, we get $p=\varphi$ on
$[0,1]$, contradicting \eqref{eq:distance-from-low-degree-polynomials}.

Let $B_\rho(0)$ be a ball on which local finite shadowing is assumed to hold. We shall construct, inside $B_\rho(0)$, arbitrarily fine
finite pseudo-orbits which cannot be shadowed with a fixed positive
accuracy.

Choose $\tau>0$ small enough so that the real vectors constructed below all
belong to $B_\rho(0)$. For $N\geq N_0$, put
\[
Z_i=\tau\lambda^i\varphi(i/N)v_0,\qquad 0\leq i\leq N.
\]

If $\lambda$ is non-real, then the conjugate block for $\overline{\lambda}$
also appears in $E_{\mathbb C}$. Our functional $\ell$ is chosen to vanish
on that conjugate block. Hence, for
\[
X_i=2\operatorname{Re} Z_i=Z_i+\overline{Z_i},
\]
we have
\[
\ell(X_i)
=
\lambda^i\tau\varphi(i/N).
\]
If $\lambda$ is real, then $\lambda=\pm1$ and the Jordan chain may be
chosen in $E$. In this case let $X_i=Z_i$ and
\[
\ell(X_i)=\lambda^i\tau\varphi(i/N).
\]
Thus, in both cases, we have
\begin{equation}\label{eq:ell-Xi}
\ell(X_i)=\lambda^i\tau\varphi(i/N)
\qquad (0\leq i\leq N).
\end{equation}

We next check that $(X_i)_{i=0}^N$ is an arbitrarily fine pseudo-orbit as
$N\to\infty$. Since $v_0$ is an eigenvector with eigenvalue $\lambda$,
\[
Z_{i+1}-A_{\mathbb C}Z_i
=
\tau\lambda^{i+1}
\bigl(\varphi((i+1)/N)-\varphi(i/N)\bigr)v_0.
\]
Hence, by uniform continuity of $\varphi$,
\[
\max_{0\leq i<N}\|Z_{i+1}-A_{\mathbb C}Z_i\|\to 0
\quad\text{as }N\to\infty.
\]
Taking real parts in the non-real case, we also get
\begin{equation}\label{eq:Xi-pseudo-small}
\max_{0\leq i<N}\|X_{i+1}-AX_i\|\to 0
\quad\text{as }N\to\infty.
\end{equation}

We now show that these finite pseudo-orbits cannot be shadowed with a fixed
positive accuracy. Let $C>0$ be such that
\[
|\ell(W)|\leq C\|W\|
\qquad (W\in E_{\mathbb C}).
\]
Choose $\varepsilon_0>0$ with $C\varepsilon_0<\tau c$.
Suppose that there is some $Y\in E$ $\varepsilon_0$-shadowing
$X_0,\ldots,X_N$, that is,
\[
\|A^iY-X_i\|<\varepsilon_0
\qquad (0\leq i\leq N).
\]
Applying $\ell$ after complexification and using \eqref{eq:ell-Xi}, we get
\[
\left|
\lambda^{-i}\ell(A_{\mathbb C}^iY)-\tau\varphi(i/N)
\right|
=
\left|
\ell(A_{\mathbb C}^iY)-\lambda^i\tau\varphi(i/N)
\right|
\leq C\varepsilon_0
\qquad (0\leq i\leq N).
\]
Recall that the function
\[
i\mapsto \lambda^{-i}\ell(A_{\mathbb C}^iY)
\]
is the restriction of a polynomial in $i$ of degree at most $q-1$. Hence
there is a polynomial $p$ of degree at most $q-1$ in the variable $t$ such
that
\[
p(i/N)=\frac{1}{\tau}\lambda^{-i}\ell(A_{\mathbb C}^iY)
\qquad (0\leq i\leq N).
\]
Therefore
\[
|\varphi(i/N)-p(i/N)|
\leq
\frac{C\varepsilon_0}{\tau}
<c
\qquad (0\leq i\leq N),
\]
contradicting \eqref{eq:discrete-polynomial-separation}. Thus no point
$\varepsilon_0$-shadows $X_0,\ldots,X_N$.

Now let $\delta>0$ be arbitrary. By \eqref{eq:Xi-pseudo-small}, for all
sufficiently large $N$ we have
\[
\|X_{i+1}-AX_i\|<\delta
\qquad (0\leq i<N).
\]
Thus, inside $B_\rho(0)$, we have constructed a finite $\delta$-pseudo-orbit
which is not $\varepsilon_0$-shadowed by any point. This contradicts
local finite shadowing. Hence $A$ must be hyperbolic.
\end{proof}

We can now prove the Lie group characterization. The sufficiency has already been obtained in Proposition~\ref{HypSha}. The converse is obtained by showing that shadowing of $\alpha$ forces the differential to have local finite shadowing.

\begin{theorem}\label{LieChar}
Let $G$ be a Lie group with Lie algebra $\mathfrak g$, and let $\alpha:G\to G$ be a continuous endomorphism.
Then $\alpha$ has shadowing if and only if
$\di\alpha:\mathfrak g\to\mathfrak g$ is hyperbolic.
\end{theorem}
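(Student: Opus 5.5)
Sufficiency is Proposition~\ref{HypSha}, so only the converse needs proof. Assume $\alpha$ has shadowing. By Lemma~\ref{LineShadEqu} it suffices to show that $A=\di\alpha$ has local finite shadowing at the origin. The idea is to transport small finite pseudo-orbits of $A$ into $G$ through the exponential map, shadow them there using $\alpha$, and pull the shadowing orbit back to $\mathfrak g$.

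Fix the chart $\theta=\exp^{-1}$ on a neighbourhood $O$ of $e$, and use $\alpha\circ\exp=\exp\circ A$. Choose a ball $B_r(0)\subseteq\mathfrak g$ on which $\exp$ is a diffeomorphism onto its image and $\theta$ is bi-Lipschitz. Take $W=B_\rho(0)$ with $\rho$ much smaller than $r$, chosen so that $A(W)\subseteq B_{r/2}(0)$; this is possible because $A$ is bounded.

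Let $(X_i)_{i=0}^N$ be a $\delta$-pseudo-orbit of $A$ inside $W$, and set $x_i=\exp(X_i)$. Then
\[
\alpha(x_i)^{-1}x_{i+1}=\exp(AX_i)^{-1}\exp(X_{i+1}).
\]
By uniform continuity of $(P,Q)\mapsto \exp(P)^{-1}\exp(Q)$ on the compact set $\overline{B_r(0)}^2$, this quantity lies in any prescribed neighbourhood $V$ of $e$ once $\delta$ is small. Given $\varepsilon$, first choose a small neighbourhood $U$ of $e$ and let $V$ witness $U$-shadowing. We then obtain $y\in G$ with $\alpha^i(y)^{-1}x_i\in U$ for $0\le i\le N$. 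Hence $\alpha^i(y)\in x_iU^{-1}\subseteq \exp(B_\rho)\,U^{-1}$, which for small $U$ lies in $\exp(B_{2\rho}(0))$. So $\alpha^i(y)=\exp(Y_i)$ with $\|Y_i-X_i\|$ small, again by uniform continuity of $\theta$ on a compact set.

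The remaining step is to identify the $Y_i$ with a genuine $A$-orbit. Put $Y=Y_0$. We have
\[
\exp(Y_{i+1})=\alpha(\exp Y_i)=\exp(AY_i),
\]
and $AY_i\in B_r(0)$ provided $2\rho\|A\|<r$, which can be arranged by shrinking $\rho$. Since $\exp$ is injective on $B_r(0)$, it follows that $Y_{i+1}=AY_i$, so $Y_i=A^iY$ by induction. Therefore $\|A^iY-X_i\|<\varepsilon$ for all $0\le i\le N$. This is local finite shadowing at the origin, and Lemma~\ref{LineShadEqu} gives hyperbolicity.

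The main obstacle is this last step: pulling the shadowing orbit back to an $A$-orbit requires every $\alpha^i(y)$ to stay within the injectivity ball of $\exp$. This is exactly why only local finite shadowing, and not full linear shadowing, can be extracted from $\alpha$. It also explains why $W$ and $U$ must be chosen small relative to $r/\|A\|$ before $\delta$ is chosen; with the order of quantifiers above, the bookkeeping is routine.
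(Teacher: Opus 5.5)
Your proposal is correct and follows the paper's proof essentially step for step. You transport the finite pseudo-orbit by $\exp$ and shadow it with $\alpha$, after extending it by the true orbit of $x_N$, a step you left implicit. You then confine every $\alpha^i(y)$ to a small exponential ball by taking the shadowing neighbourhood small, and use injectivity of $\exp$ on a ball containing both $Y_{i+1}$ and $AY_i$ to conclude $\alpha^i(y)=\exp(A^iY)$. The only difference is cosmetic: you shrink $\rho$ so that $A(B_{2\rho}(0))\subseteq B_r(0)$, whereas the paper takes $\exp$ injective on a set containing both $\overline{B_r(0)}$ and $A(\overline{B_r(0)})$.
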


\begin{proof}
The sufficiency is exactly Proposition~\ref{HypSha}. We prove the converse.
Assume that $\alpha$ has shadowing and put
$
A=\di\alpha.
$
By Lemma~\ref{LineShadEqu}, it is enough to show that $A$ has local finite
shadowing.

Choose
$r>0$ so small that the exponential map is injective on an open
neighbourhood containing both $\overline{B_r(0)}$ and
$A(\overline{B_r(0)})$. Fix $0<\varrho<r$. Given $\varepsilon>0$,
Replacing $\varepsilon$ by a smaller positive number if necessary, we may
assume that $\varepsilon<r$.

Set
$
C=\exp(\overline{B_\varrho(0)}).
$
Then $C$ is compact and $C\subseteq \exp(B_r(0))$. Hence there exists a
symmetric neighbourhood $W$ of $e$ in $G$ such that
\begin{equation}\label{eq:nbd}
CW\subseteq \exp(B_r(0)).
\end{equation}
The set
\[
D_\varepsilon=\{\exp(P)^{-1}\exp(Q):P,Q\in\overline{B_r(0)},
\ \|P-Q\|\geq\varepsilon\}
\]
is compact and does not contain $e$, because $\exp$ is injective on a
neighbourhood of $\overline{B_r(0)}$. Choose a symmetric neighbourhood
$O\subseteq W$ of $e$ such that $O\cap D_\varepsilon=\varnothing$. Then,
whenever $P,Q\in\overline{B_r(0)}$,
\begin{equation}\label{eq:uni}
\exp(P)^{-1}\exp(Q)\in O
\qquad \Longrightarrow
\qquad
\|P-Q\|<\varepsilon.
\end{equation}

Let $N$ be a neighbourhood of $e$ witnessing $O$-shadowing for $\alpha$;
that is, every $N$-pseudo-orbit is $O$-shadowed by some $y\in G$. Shrinking
$N$ if necessary, we may assume that $N\subseteq W$.

By continuity of the map
\[
(P,Q)\mapsto \exp(AP)^{-1}\exp(Q),
\]
and compactness of $\overline{B_\varrho(0)}$, there exists $\delta>0$ such
that, whenever $P,Q\in\overline{B_\varrho(0)}$ satisfy
\begin{equation}\label{eq:lin2gr}
\|Q-AP\|<\delta,
\end{equation}
one has $\exp(AP)^{-1}\exp(Q)\in N$.

Now let $X_0,\ldots,X_m\in B_\varrho(0)$ be a finite $\delta$-pseudo-orbit
for $A$, that is,
\[
\|X_{i+1}-AX_i\|<\delta\qquad (0\leq i<m).
\]
Put
$x_i=\exp(X_i)$ for $0\leq i\leq m$,
and extend this finite sequence by the true $\alpha$-orbit of $x_m$:
\[
x_{m+j}=\alpha^j(x_m)\qquad (j\geq 1).
\]
Since
$
\alpha(\exp X)=\exp(AX),
$
for any $X\in \mathfrak g$,
condition \eqref{eq:lin2gr} implies that
$(x_n)_{n\geq 0}$ is an $N$-pseudo-orbit for $\alpha$.

Therefore there exists $y\in G$ such that
$
\alpha^i(y)^{-1}x_i\in O,
$
for all $0\leq i\leq m$.
Since $O$ is symmetric, this gives
\[
\alpha^i(y)\in x_iO\subseteq CW\subseteq \exp(B_r(0))
\qquad (0\leq i\leq m),
\]
where we used \eqref{eq:nbd}. In particular, taking
$i=0$, there is $Y\in B_r(0)$ such that
$
y=\exp(Y).
$

We claim that
\begin{equation}\label{eq:claim}
\alpha^i(y)=\exp(A^iY)
\quad\text{and}\quad
A^iY\in B_r(0)
\end{equation}
for all $0\leq i\leq m$. This is clear for $i=0$. Suppose it holds for
some $i<m$. Then
\[
\alpha^{i+1}(y)
=
\alpha(\exp(A^iY))
=
\exp(A^{i+1}Y).
\]
On the other hand, we have already shown that
$\alpha^{i+1}(y)\in\exp(B_r(0))$. Hence
$\alpha^{i+1}(y)=\exp(Z)$ for some $Z\in B_r(0)$. Since
$A^iY\in B_r(0)$, we have $A^{i+1}Y\in A(B_r(0))$. By the choice of $r$,
the exponential map is injective on a set containing both $B_r(0)$ and
$A(B_r(0))$. Therefore $A^{i+1}Y=Z\in B_r(0)$. This proves the claim.

Combining the inclusion $\alpha^i(y)^{-1}x_i\in O$
 with
\eqref{eq:claim}, we get
\[
\exp(A^iY)^{-1}\exp(X_i)\in O
\qquad (0\leq i\leq m).
\]
By \eqref{eq:uni}, it follows that
\[
\|A^iY-X_i\|<\varepsilon
\qquad (0\leq i\leq m).
\]
Thus every finite $\delta$-pseudo-orbit of $A$ contained in
$B_\varrho(0)$ is $\varepsilon$-shadowed by $Y$.
This completes the proof.
\end{proof}

\begin{remark}
The argument above uses more than the fact that the exponential map is a
local diffeomorphism, or even a local uniform homeomorphism. Indeed, a finite
pseudo-orbit $X_0,\ldots,X_m$ of $A$ contained in a small neighbourhood of
$0$ gives a finite pseudo-orbit $\exp(X_0),\ldots,\exp(X_m)$ of $\alpha$ in
a small neighbourhood of $e$. The shadowing property of $\alpha$ then gives
a point $y\in G$ which shadows it. However, the local chart alone does not
imply that this shadowing point $y$, or its iterates $\alpha^i(y)$, remain
inside the same coordinate neighbourhood. This is why, in the proof, the
shadowing neighbourhood $O$ is chosen so small that
\[
\exp(\overline{B_\varrho(0)})O\subseteq \exp(B_r(0)).
\]
This forces all relevant points $\alpha^i(y)$ to stay inside the exponential
chart, allowing us to pull the shadowing relation back to the Lie algebra.
\end{remark}

If $A$ is an invertible linear map on a finite-dimensional vector space, then
$A$ is hyperbolic if and only if $A^{-1}$ is hyperbolic. Therefore
Theorem~\ref{LieChar} immediately yields the following consequence.

\begin{corollary}\label{twos}
Let $G$ be a Lie group and let $\alpha\in\operatorname{Aut}(G)$. Then the
following conditions are equivalent:
\begin{enumerate}
\item $\alpha$ has shadowing;
\item $\alpha^{-1}$ has shadowing;
\item $\alpha$ has two-sided shadowing.
\end{enumerate}
\end{corollary}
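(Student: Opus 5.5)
The equivalence $(1)\Leftrightarrow(3)$ needs no Lie theory: it is exactly Lemma~\ref{lem:one-two-sided}, since a Lie group is locally compact and $\alpha$ is a topological automorphism. It therefore remains to prove $(1)\Leftrightarrow(2)$. For this I will reduce both conditions to spectral statements about differentials via Theorem~\ref{LieChar}.

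First note that $\alpha^{-1}$ is again a continuous endomorphism of $G$, so Theorem~\ref{LieChar} applies to it as well as to $\alpha$. Since $\alpha\circ\alpha^{-1}=\alpha^{-1}\circ\alpha=\operatorname{id}_G$, the chain rule at $e$ gives
\[
\di\alpha\circ\di(\alpha^{-1})=\di(\alpha^{-1})\circ\di\alpha=\operatorname{id}_{\mathfrak g}.
\]
Hence $A=\di\alpha$ is invertible and $\di(\alpha^{-1})=A^{-1}$.

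Now compare spectra. The spectrum of the complexification of $A^{-1}$ is $\{\mu^{-1}:\mu\in\sigma(A)\}$, and no zero eigenvalue occurs because $A$ is invertible. Inversion $z\mapsto z^{-1}$ maps $S^1$ onto itself. Therefore $\sigma(A)\cap S^1=\varnothing$ if and only if $\sigma(A^{-1})\cap S^1=\varnothing$, that is, $A$ is hyperbolic if and only if $A^{-1}$ is.

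Applying Theorem~\ref{LieChar} once to $\alpha$ and once to $\alpha^{-1}$ then gives
\[
\alpha \text{ has shadowing} \iff A \text{ hyperbolic} \iff A^{-1} \text{ hyperbolic} \iff \alpha^{-1} \text{ has shadowing},
\]
which is $(1)\Leftrightarrow(2)$.

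There is no real obstacle here. The only points needing care are the identification $\di(\alpha^{-1})=(\di\alpha)^{-1}$ and the invertibility of $\di\alpha$, which is what lets us discard the zero eigenvalue allowed in the paper's definition of hyperbolicity.
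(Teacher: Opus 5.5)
Your proof is correct and follows the paper's argument exactly. The paper also gets $(1)\Leftrightarrow(2)$ from Theorem~\ref{LieChar} together with $\di(\alpha^{-1})=(\di\alpha)^{-1}$ and the fact that hyperbolicity is preserved under inversion, and gets $(1)\Leftrightarrow(3)$ from Lemma~\ref{lem:one-two-sided}.
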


\begin{proof}
By Theorem~\ref{LieChar}, $\alpha$ has shadowing if and only if
$\di\alpha$ is hyperbolic. Since $\di(\alpha^{-1})=(\di\alpha)^{-1}$, this
is equivalent to $\alpha^{-1}$ having shadowing. The equivalence with
two-sided shadowing follows from Lemma~\ref{lem:one-two-sided}.
\end{proof}

\begin{proposition}\label{prop:Lie-Anosov-endomorphism}
Let $G$ be a compact connected Lie group and let
$\alpha\in\operatorname{End}(G)$. Assume that $\di\alpha$ is nonsingular.
Then $\alpha$ is a covering map, and the following conditions are
equivalent:
\begin{enumerate}
\item $\alpha$ has shadowing;
\item $\di\alpha$ is hyperbolic;
\item $\alpha$ is an Anosov endomorphism in the quotient-bundle sense fixed
above.
\end{enumerate}
\end{proposition}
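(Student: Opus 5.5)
The plan has three parts: show that $\alpha$ is a covering map, quote Theorem~\ref{LieChar} for the equivalence of (1) and (2), and then relate hyperbolicity of $\di\alpha$ to the Anosov condition through left-invariant structures on $TG$.

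\textbf{Covering.} Since $\di\alpha$ is nonsingular, $\alpha$ is a local diffeomorphism at $e$. By equivariance, $\alpha\circ L_g=L_{\alpha(g)}\circ\alpha$, it is a local diffeomorphism everywhere. In particular $\alpha$ is open, so $\alpha(G)$ is an open subgroup. It is also compact, hence closed, and since $G$ is connected, $\alpha(G)=G$. The kernel $K=\ker\alpha$ is a closed subgroup. It is discrete because $\alpha$ is injective near $e$, and compact because $G$ is compact, so $K$ is finite. The induced map $G/K\to G$ is a continuous bijective homomorphism from a compact group onto a Hausdorff group, hence a topological isomorphism. Therefore $\alpha$ is the quotient map by a finite normal subgroup, which is a finite covering.

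\textbf{(1)$\Leftrightarrow$(2).} This is exactly Theorem~\ref{LieChar}.

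\textbf{(2)$\Rightarrow$(3).} Trivialize $TG\cong G\times\mathfrak g$ by left translations, $v\in T_gG\mapsto \di L_{g^{-1}}v$. From $\alpha\circ L_g=L_{\alpha(g)}\circ\alpha$ we get
\[
\di L_{\alpha(g)^{-1}}\,D\alpha_g\,\di L_g=\di\alpha ,
\]
so in this trivialization $D\alpha$ is the constant map $A=\di\alpha$ on fibres. Take the splitting $\mathfrak g=E^{\mathrm s}\oplus E^{\mathrm u}$ from \eqref{eq:splitting} and the adapted norm satisfying \eqref{eq:lambda-estimate}. Define $E^{\mathrm s}_g=\di L_g(E^{\mathrm s})$ and the left-invariant Riemannian metric induced by this norm; one can replace the max norm by an inner-product norm, at the cost of a constant $C$. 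Then $E^{\mathrm s}$ is a continuous, indeed smooth, $D\alpha$-invariant subbundle. Along it we have $\|D\alpha^n v\|\le C\lambda^n\|v\|$. On the quotient $TG/E^{\mathrm s}\cong G\times(\mathfrak g/E^{\mathrm s})\cong G\times E^{\mathrm u}$, the induced map is $A_{\mathrm u}$, which gives the expansion $\ge C^{-1}\lambda^{-n}$. Since $\alpha$ is a local diffeomorphism, this makes $\alpha$ an Anosov endomorphism.

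\textbf{(3)$\Rightarrow$(2).} This is the step I expect to be the main obstacle, because the given bundle $E^{\mathrm s}$ and metric need not be left-invariant. My plan uses two facts.

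First, any two Riemannian metrics on compact $G$ are uniformly equivalent. We may therefore measure everything in a left-invariant metric, at the cost of changing $C$.

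Second, I will show that $A$ has no eigenvalue on $S^1$ by producing a contradiction. Suppose $\mu\in S^1$ is an eigenvalue. Since $\di L$ identifies $D\alpha^n$ along orbits with $A^n$, we have
\[
\|D\alpha^n_g(\di L_g w)\|=\|A^n w\|\quad\text{for } w\in\mathfrak g .
\]
Choose a nonzero real vector $w$ in the real span of the generalized eigenspace for $\mu$ and $\bar\mu$, namely an eigenvector part. Then $\|A^n w\|$ is bounded above and below, since $\mu$ is semisimple on that part. Fix $g$ and decompose $\di L_g w$ modulo $E^{\mathrm s}_g$. If its class in the quotient is nonzero, the expansion estimate forces $\|A^n w\|\ge \|\overline{D\alpha^n}\,\bar v\|\to\infty$, contradicting boundedness. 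If the class is zero, then $\di L_g w\in E^{\mathrm s}_g$, so $\|A^n w\|\to0$, contradicting the lower bound. Hence $A$ is hyperbolic.

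The case of a nontrivial Jordan block is handled by the same argument applied to the eigenvector $v_0$, whose orbit norms are constant. One subtlety remains in the quotient norm: $\|\overline{D\alpha^n}\,\bar v\|\le\|D\alpha^n v\|$, which is the inequality used in the first case.
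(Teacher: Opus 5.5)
Your proof is correct. The covering argument, the appeal to Theorem~\ref{LieChar} for (1)$\Leftrightarrow$(2), and the left-translation construction for (2)$\Rightarrow$(3) are essentially the paper's. The one small difference there is that the paper gets a finite covering from properness of a surjective local diffeomorphism of a compact manifold, while you get it from finiteness of $\ker\alpha$; either works.

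The genuine difference is in (3)$\Rightarrow$(2). The paper only looks at the fixed point $e$. There $E^{\mathrm s}_e$ is an $A$-invariant subspace of $\mathfrak g$. The stable estimate puts $\sigma(A|_{E^{\mathrm s}_e})$ inside the open unit disk, and the quotient estimate puts the spectrum of the induced map $\bar A$ on $\mathfrak g/E^{\mathrm s}_e$ outside the closed unit disk. Block upper triangularity then gives $\sigma(A)=\sigma(A|_{E^{\mathrm s}_e})\cup\sigma(\bar A)$.

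You instead argue by contradiction. You take an eigenvalue $\mu\in S^1$, or rather the real part $w$ of an eigenvector for it, so that $\|A^nw\|$ is bounded above and below. You then use the pointwise dichotomy: the class of $\di L_g w$ in $T_gG/E^{\mathrm s}_g$ is either zero or not. This needs no invariant subspace and no spectral-union fact. It uses only the pointwise estimates together with $\|\overline{D\alpha^n}\,\bar v\|\le\|D\alpha^n v\|$.

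The obstacle you anticipated, that the bundle and metric need not be left-invariant, disappears if you simply take $g=e$. There $D\alpha^n_e=A^n$ on the nose, and everything is measured in a single norm on $T_eG$. So neither the left-invariant trivialization nor the uniform equivalence of Riemannian metrics on compact $G$ is actually needed.

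The paper's route is shorter, and by a dimension count it also identifies $E^{\mathrm s}_e$ with the stable subspace of $A$. Yours is slightly more elementary on the linear-algebra side and works at any base point.
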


\begin{proof}
Since $\di\alpha$ is nonsingular, $\alpha$ is a local diffeomorphism and
$\alpha(G)$ is an open subgroup of $G$. Connectedness gives
$\alpha(G)=G$. Since $G$ is compact, this surjective local diffeomorphism is
proper and hence is a finite covering map.

The equivalence of (1) and (2) is Theorem~\ref{LieChar}. Suppose that (2)
holds and write
$\mathfrak g=E^{\mathrm s}\oplus E^{\mathrm u}$ for the hyperbolic
splitting of $A=\di\alpha$. Choose an adapted inner product for which the
restriction to $E^{\mathrm s}$ contracts and the restriction to
$E^{\mathrm u}$ expands, and extend it to a left-invariant Riemannian metric
on $G$. Left translation of $E^{\mathrm s}$ gives a $D\alpha$-invariant
stable bundle. Under left translations, its quotient bundle is identified
with $E^{\mathrm u}$, on which the induced map is uniformly expanding.
Hence $\alpha$ is an Anosov endomorphism.

Conversely, if $\alpha$ is an Anosov endomorphism, evaluate its stable
bundle and expanding quotient at the fixed point $e$. The restriction of
$A$ to the stable space has spectrum inside the open unit disk, while the
induced map on the quotient has spectrum outside the closed unit disk. In a
basis adapted to the stable space, $A$ is block upper triangular, and its
spectrum is the union of the spectra of the restriction and the quotient
map. Thus $A$ is hyperbolic.
\end{proof}

\begin{remark}
The nonsingularity assumption in Proposition~\ref{prop:Lie-Anosov-endomorphism}
is exactly what places $\alpha$ in the classical theory of Anosov
endomorphisms. If $\di\alpha$ is singular, zero may occur as a stable
eigenvalue, and Theorem~\ref{LieChar} still gives shadowing although
$\alpha$ is not a local diffeomorphism.
\end{remark}

\subsection{Consequences and examples}

We now discuss positively expansive endomorphisms and expansive
automorphisms. In the Lie setting, shadowing is not an additional
assumption once the corresponding expansiveness condition holds; it follows
from the Lie algebra criterion in Theorem~\ref{LieChar}.

For Lie group endomorphisms, positive expansiveness can also be read from
the differential. For connected groups, the following criterion is the
cyclic-semigroup case of Bhattacharya's infinitesimal characterization
\cite[Theorem~A and Proposition~2.4]{Bhattacharya2004}. We include the direct
argument both for completeness and to record the extension to arbitrary,
not necessarily connected, Lie groups.

\begin{proposition}\label{PostEx}
Let $G$ be a Lie group with Lie algebra $\mathfrak g$, let
$\alpha:G\to G$ be a continuous endomorphism, and put
$A=\di\alpha:\mathfrak g\to\mathfrak g$. Then $\alpha$ is positively
expansive if and only if every eigenvalue of $A$ has modulus strictly larger
than $1$. Consequently, if $G$ is connected, then $\alpha$ is surjective;
if $G$ is simply connected, then $\alpha$ is bijective.
\end{proposition}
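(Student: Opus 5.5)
The plan is to prove the two implications separately and then read off the consequences. Throughout, $\exp$ is a local diffeomorphism near $0$ and satisfies $\alpha(\exp X)=\exp(AX)$.

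\emph{Sufficiency.} Assume every eigenvalue of $A$ has modulus $>1$. Then $A$ is invertible and $\|A^{-1}\|<1$ in a suitable adapted norm. Choose $r>0$ so small that $\exp$ is injective on a neighbourhood of $\overline{B_r(0)}\cup A(\overline{B_r(0)})$, and put $U=\exp(B_r(0))$. Suppose $g\in\bigcap_{n\geq0}\alpha^{-n}(U)$ and write $g=\exp(X)$ with $X\in B_r(0)$. Inductively, $\alpha^n(g)=\exp(A^nX)$ with $A^nX\in B_r(0)$; this is the same injectivity argument as in the claim within the proof of Theorem~\ref{LieChar}. Since $\|A^nX\|\geq \|A^{-1}\|^{-n}\|X\|\to\infty$ unless $X=0$, we get $g=e$.

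\emph{Necessity.} Assume $\alpha$ is positively expansive with neighbourhood $U$. Suppose, for contradiction, that $A$ has an eigenvalue of modulus $\leq1$. Its generalized eigenspace (real form) $F$ is nonzero, and we will produce a nonzero $X\in F$ with $A^nX$ small for all $n\geq 0$.

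If the eigenvalue is $0$, take $X\in\ker A$ small. Then $\alpha(\exp X)=e$, so $\exp(tX)$ for small $t$ gives non-identity elements whose whole forward orbit lies in $U$, a contradiction.

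If $|\lambda|=1$ or $0<|\lambda|<1$, take $X$ in the genuine eigenspace (the real span of $\operatorname{Re}v_0,\operatorname{Im}v_0$). On that span $A$ acts by a rotation times $|\lambda|$, so $\sup_n\|A^nX\|\leq C\|X\|$. Choosing $\|X\|$ small gives $\alpha^n(\exp X)=\exp(A^nX)\in U$ for all $n$, with $\exp X\neq e$. This is the key step; the main obstacle is only the bookkeeping showing that bounded orbits in the Lie algebra stay in the chart. Here this is trivial, since we map forward with $\exp$ and no pullback is needed.

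\emph{Consequences.} If all eigenvalues have modulus $>1$, then $A$ is invertible, so $\alpha$ is a local diffeomorphism and $\alpha(G)$ is an open, hence closed, subgroup. If $G$ is connected, this forces $\alpha(G)=G$. If $G$ is simply connected, $\alpha$ is then a surjective Lie group homomorphism with discrete kernel, hence a covering map. A covering of a simply connected space by a connected space is trivial, so $\ker\alpha$ is trivial and $\alpha$ is bijective.
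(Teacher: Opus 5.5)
Your proof is correct and follows the paper's argument. Necessity uses a small nonzero vector whose forward $A$-orbit is bounded, pushed forward by $\exp$, and the consequences come from openness of $\alpha(G)$ and triviality of coverings of a simply connected group. For sufficiency you run the inductive chart-staying argument from the proof of Theorem~\ref{LieChar}, whereas the paper takes the least exit time from $B_{r/M}(0)$ with $V=\exp(B_{r/M}(0))$. Both rest on the same mechanism: $\exp$ is injective on a ball that absorbs one further application of $A$.
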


\begin{proof}
Assume first that $\alpha$ is positively expansive. Let $V$ be a
neighbourhood of $e$ such that
\[
\bigcap_{n\geq 0}\alpha^{-n}(V)=\{e\}.
\]
Shrinking $V$ if necessary, we may choose a neighbourhood $U$ of $0$ in
$\mathfrak g$ such that $\exp|_U:U\to V$ is a homeomorphism.

Suppose that $A$ has an eigenvalue of modulus at most $1$. Then there is a
non-zero vector $X\in\mathfrak g$ such that $(A^nX)_{n\geq 0}$ is bounded.
 Replacing $X$ by $tX$ for a
sufficiently small $t>0$, we may assume that $A^nX\in U$ for every
$n\geq 0$. Hence
\[
\alpha^n(\exp X)=\exp(A^nX)\in V
\qquad (n\geq 0).
\]
Thus $\exp X\in \bigcap_{n\geq 0}\alpha^{-n}(V)$. Since $X\neq 0$ and
$\exp$ is injective on $U$, this contradicts the choice of $V$. Therefore
all eigenvalues of $A$ have modulus strictly larger than $1$.

Conversely, assume that all eigenvalues of $A$ have modulus strictly larger
than $1$. Choose an equivalent norm on $\mathfrak g$ and a number
$\lambda>1$ such that $\|AX\|\geq \lambda\|X\|$ for all
$X\in\mathfrak g$. Choose $r>0$ such that $\exp$ is injective on $B_r(0)$.
Let $M>1$ satisfy $\|AX\|\leq M\|X\|$ for all $X\in\mathfrak g$, and set
\[
V=\exp(B_{r/M}(0)).
\]
We show that $V$ witnesses positive expansiveness.

Let $g\neq e$. If $g\notin V$, there is nothing to prove. Otherwise write
$g=\exp X$ with $X\in B_{r/M}(0)$. Since $\exp$ is injective on $B_r(0)$,
we have $X\neq 0$. As $\|A^kX\|\to\infty$, let $k\geq 0$ be the least
integer such that $A^kX\notin B_{r/M}(0)$. Then $k\geq 1$ and
$A^{k-1}X\in B_{r/M}(0)$, so $\|A^kX\|<r$. Hence
$A^kX\in B_r(0)\setminus B_{r/M}(0)$. Therefore
\[
\alpha^k(g)=\exp(A^kX)\notin V,
\]
because $\exp$ is injective on $B_r(0)$. Thus no non-trivial element remains
in $V$ under all forward iterates, and
$\bigcap_{n\geq 0}\alpha^{-n}(V)=\{e\}$. Hence $\alpha$ is positively
expansive.

The final assertions follow as follows. The condition on the eigenvalues
implies that $A$ is invertible, and hence $\alpha$ is a local diffeomorphism
at $e$. Therefore $\alpha(G)$ is an open subgroup of $G$. If $G$ is
connected, every open subgroup is all of $G$, so $\alpha$ is surjective. If
$G$ is simply connected, then this surjective local diffeomorphism is a
covering map of $G$ onto itself; since $G$ is simply connected, the covering
is trivial. Hence $\alpha$ is injective, and therefore bijective.
\end{proof}

\begin{lemma}[{\cite[Part~I, \S4, Exercise~21(b)]{BourbakiLie}}]\label{Bou}
	Let $\mathfrak g$ be a finite-dimensional real Lie algebra. If
	$\mathfrak g$ admits a hyperbolic automorphism, then $\mathfrak g$ is
	nilpotent.
\end{lemma}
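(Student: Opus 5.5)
The plan is to reduce to the solvable case with a Levi decomposition, exclude a non-zero semisimple part by an inner-automorphism argument, and finish the solvable case with Lie's theorem and Engel's theorem. Throughout, let $A$ be a hyperbolic automorphism of $\mathfrak g$.

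\textbf{Step 1: reduction to the solvable case.} The radical $\mathfrak r$ of $\mathfrak g$ is characteristic, so $A(\mathfrak r)=\mathfrak r$. Hence $A$ induces an automorphism $\bar A$ of the semisimple algebra $\mathfrak s=\mathfrak g/\mathfrak r$. The spectrum of $\bar A$ is contained in that of $A$ (use a block triangular form), so $\bar A$ is hyperbolic. I will show that a semisimple Lie algebra admits no hyperbolic automorphism unless $\mathfrak s=0$; this gives $\mathfrak g=\mathfrak r$.

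\textbf{Step 2: the semisimple case.} This is the step I expect to be the main obstacle. The identity component $\operatorname{Aut}(\mathfrak s_{\mathbb C})^0=\operatorname{Int}(\mathfrak s_{\mathbb C})$ has finite index in the algebraic group $\operatorname{Aut}(\mathfrak s_{\mathbb C})$. So some power satisfies $\bar A^k=\operatorname{Ad}(g)$ for some $g$ in the adjoint group $\mathcal G$, and $\bar A^k$ is still hyperbolic. The Lie algebra of the centralizer $Z_{\mathcal G}(g)$ is exactly the fixed space $\{X:\operatorname{Ad}(g)X=X\}$. In a connected reductive algebraic group over $\mathbb C$, every centralizer has dimension at least the rank, and the rank is at least $1$ when $\mathfrak s\neq0$. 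To justify this, write the Jordan decomposition $g=su$ and put $s$ in a maximal torus; alternatively, cite the standard fact $\dim Z(g)\geq\operatorname{rank}$. It follows that $1$ is an eigenvalue of $\bar A^k$, a contradiction. Hence $\mathfrak s=0$ and $\mathfrak g$ is solvable. If this algebraic-group input turns out to be too heavy, my fallback is the more hands-on route: grade $\mathfrak s_{\mathbb C}$ by the generalized eigenspaces $\mathfrak g_\lambda$, which satisfy $[\mathfrak g_\lambda,\mathfrak g_\mu]\subseteq\mathfrak g_{\lambda\mu}$, and use the Killing form pairing of $\mathfrak g_\lambda$ with $\mathfrak g_{\lambda^{-1}}$.

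\textbf{Step 3: the solvable case.} Work in $\mathfrak g_{\mathbb C}$. The algebra $\operatorname{ad}(\mathfrak g_{\mathbb C})$ is solvable and normalized by $A$. Let $\mathcal A$ be the Zariski closure of $\langle A\rangle$, which is abelian; the Lie algebra generated by $\operatorname{ad}\mathfrak g_{\mathbb C}$ and $\operatorname{Lie}(\mathcal A)$ is then solvable. Lie's theorem, applied to it together with Lie--Kolchin for $\mathcal A$, gives a full flag of $\mathfrak g_{\mathbb C}$ by subspaces that are $\operatorname{ad}\mathfrak g$-stable and $A$-stable. On each one-dimensional quotient spanned by $\bar v$, with $A\bar v=c\bar v$ and $c\neq0$, the element $X$ acts by a character $\chi(X)$, and $\chi$ vanishes on $[\mathfrak g,\mathfrak g]$. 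Applying $A$ to $[X,v]=\chi(X)v$ modulo the lower step gives $\chi(AX)c\bar v=c\chi(X)\bar v$, that is, $\chi\circ A=\chi$. Thus $\chi$ is a linear functional on $\mathfrak g_{\mathbb C}/[\mathfrak g,\mathfrak g]_{\mathbb C}$ fixed by the transpose of the induced map. That induced map is hyperbolic, so $1$ is not an eigenvalue of its transpose, and therefore $\chi=0$.

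Consequently every $\operatorname{ad}X$ is strictly triangular with respect to the flag, hence nilpotent, and Engel's theorem shows that $\mathfrak g_{\mathbb C}$, and therefore $\mathfrak g$, is nilpotent.
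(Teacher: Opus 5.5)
The gap is in Step~3, at the sentence claiming that Lie's theorem for $\mathfrak h=\operatorname{Lie}(\mathcal A)+\operatorname{ad}\mathfrak g_{\mathbb C}$ ``together with Lie--Kolchin for $\mathcal A$'' gives one full flag of $\mathfrak g_{\mathbb C}$ that is stable under both $\operatorname{ad}\mathfrak g_{\mathbb C}$ and $A$.

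Lie's theorem gives a flag stable under $\mathfrak h$, and hence under the identity component $\mathcal A^0$. But $A$ itself need not lie in $\mathcal A^0$. Simultaneous triangularization of the abelian group $\mathcal A$ gives an $\mathcal A$-stable flag, but nothing makes that flag $\operatorname{ad}$-stable, and the two flags cannot simply be merged.

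Hyperbolicity is not used at this point, so your claim would have to hold for every automorphism, and in that generality it is false. Take $\mathfrak g=\mathfrak{aff}(1)\oplus\mathfrak{aff}(1)$ with basis $h_1,e_1,h_2,e_2$ and $[h_i,e_j]=\delta_{ij}e_j$. Let $A$ swap $h_1$ and $h_2$ and send $e_1\mapsto 2e_2$, $e_2\mapsto 2e_1$. The only one-dimensional ideals of $\mathfrak g_{\mathbb C}$ are $\mathbb C e_1$ and $\mathbb C e_2$, and $A$ interchanges them. The first term of a full flag stable under both $\operatorname{ad}\mathfrak g_{\mathbb C}$ and $A$ would be an $A$-stable one-dimensional ideal, so no such flag exists. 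Here $\mathcal A$ has two components and $A\notin\mathcal A^0$.

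The repair is short. Since $\mathcal A$ has finitely many components, $A^k\in\mathcal A^0$ for some $k\geq1$, and $A^k$ is still a hyperbolic automorphism. A subspace stable under $\operatorname{Lie}(\mathcal A^0)$ is stable under the connected group $\mathcal A^0$, so Lie's theorem gives a flag stable under $\operatorname{ad}\mathfrak g_{\mathbb C}$ and $A^k$. Your final computation then runs with $A^k$ in place of $A$.

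Alternatively, you can avoid $A$-adapted flags altogether. Let $\chi_1,\dots,\chi_n$ be the diagonal characters of a flag given by Lie's theorem for $\operatorname{ad}\mathfrak g_{\mathbb C}$. They satisfy $\det(t-\operatorname{ad}X)=\prod_i(t-\chi_i(X))$. The identity $\operatorname{ad}(AX)=A\,\operatorname{ad}(X)A^{-1}$ and unique factorization show that $\chi\mapsto\chi\circ A$ permutes the $\chi_i$ with multiplicities. So some $A^k$ fixes every $\chi_i$. Since each $\chi_i$ vanishes on $[\mathfrak g,\mathfrak g]$ and $A^k$ induces a hyperbolic map on $\mathfrak g/[\mathfrak g,\mathfrak g]$, it follows that $\chi_i=0$.

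Steps~1 and~2 are sound. One small point in Step~2: the Jordan-decomposition hint does not by itself prove $\dim Z_{\mathcal G}(g)\geq\operatorname{rank}$. It does give what you actually need, because $\operatorname{Ad}(g)$ has the same eigenvalues as its semisimple part $\operatorname{Ad}(s)$, and $\operatorname{Ad}(s)$ fixes the Lie algebra of a maximal torus containing $s$.

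The paper itself gives no proof of this lemma; it only cites Bourbaki. A much lighter self-contained route is your fallback idea applied to all of $\mathfrak g_{\mathbb C}$ at once. For $x\in\mathfrak g_\lambda$ one has $(\operatorname{ad}x)^m\mathfrak g_\mu\subseteq\mathfrak g_{\lambda^m\mu}$, so $|\lambda|\neq1$ and $\mu\neq0$ force $\operatorname{ad}x$ to be nilpotent. The set of these homogeneous $\operatorname{ad}x$ is closed under commutators. Jacobson's weakly closed form of Engel's theorem then gives nilpotency of $\mathfrak g$ directly, with no radical, Levi factor, or algebraic groups.
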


Combining the preceding proposition with Theorem~\ref{LieChar}, we obtain the promised relation between positive expansiveness and topological expansion.

\begin{theorem}\label{thm:positive-expansive-shadowing}
Let $G$ be a Lie group, and let $\alpha:G\to G$ be a continuous endomorphism.
If $\alpha$ is positively expansive, then $\alpha$ has the shadowing property.
Consequently, $\alpha$ is topologically expanding if and only if it is
positively expansive. Moreover, if $G$ admits a positively expansive
endomorphism, then the identity component $G_0$ is nilpotent.
\end{theorem}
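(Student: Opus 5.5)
The plan is to combine Proposition~\ref{PostEx} with Theorem~\ref{LieChar}, and then use Lemma~\ref{Bou} for the structural assertion.

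\emph{Shadowing.} Assume $\alpha$ is positively expansive and put $A=\di\alpha$. By Proposition~\ref{PostEx}, every eigenvalue of $A$ has modulus strictly larger than $1$. In particular $\sigma(A)\cap S^1=\varnothing$, so $A$ is hyperbolic in the sense defined above; here the stable part is trivial and $A$ is invertible. Theorem~\ref{LieChar}, or directly Proposition~\ref{HypSha}, then gives shadowing.

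\emph{Topologically expanding.} By definition, topologically expanding means positively expansive together with shadowing. One implication is therefore trivial, and the other is the shadowing statement just proved. The only point to check is that positive expansiveness in the uniform sense, with an entourage of the left uniformity, agrees with the group definition via a neighbourhood $U$ of $e$. The identity $\alpha^n(x)^{-1}\alpha^n(y)=\alpha^n(x^{-1}y)$ reduces pairs $(x,y)$ to the pair $(e,x^{-1}y)$, so the two notions coincide.

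\emph{Nilpotency of $G_0$.} Suppose $G$ admits a positively expansive endomorphism $\alpha$. Then $A=\di\alpha$ is an invertible linear map of $\mathfrak g$ with all eigenvalues of modulus $>1$. Since $\alpha$ is a group homomorphism, $A$ is also a Lie algebra homomorphism, hence a hyperbolic Lie algebra automorphism of $\mathfrak g$. Lemma~\ref{Bou} then shows that $\mathfrak g$ is nilpotent. Because $G_0$ is a connected Lie group with Lie algebra $\mathfrak g$, it is nilpotent; this is the standard fact that a connected Lie group is nilpotent exactly when its Lie algebra is.

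The main obstacle is only bookkeeping: making sure the uniform definition of positive expansiveness matches the group one, and that $\di\alpha$ is indeed a Lie algebra automorphism, so that Lemma~\ref{Bou} applies. The substantive work was already done in Theorem~\ref{LieChar}.
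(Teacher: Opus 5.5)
Your proposal is correct and matches the paper's proof: Proposition~\ref{PostEx} makes $\di\alpha$ hyperbolic, Theorem~\ref{LieChar} then gives shadowing, the equivalence with being topologically expanding holds by definition, and Lemma~\ref{Bou} makes $\mathfrak g$ nilpotent and hence $G_0$ nilpotent. Your additional check, that positive expansiveness for the left uniformity agrees with the neighbourhood definition, is correct bookkeeping that the paper leaves implicit.
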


\begin{proof}
Put $A=\di\alpha:\mathfrak g\to\mathfrak g$. If $\alpha$ is positively
expansive, then by Proposition~\ref{PostEx}, every eigenvalue of $A$ has
modulus strictly larger than $1$. In particular, $A$ is hyperbolic. Hence,
by Theorem~\ref{LieChar}, $\alpha$ has the shadowing property.

The equivalence with being topologically expanding follows immediately from
the definition: a topologically expanding endomorphism is a positively
expansive endomorphism with shadowing, and we have just shown that positive
expansiveness already implies shadowing.

It remains to prove the last assertion. Suppose that $G$ admits a positively
expansive endomorphism $\alpha$. By Proposition~\ref{PostEx}, $\di\alpha$
is an automorphism all of whose eigenvalues have modulus strictly larger
than $1$; in particular, it is a hyperbolic automorphism. Hence, by
Lemma~\ref{Bou}, the Lie algebra $\mathfrak g$ is nilpotent. Therefore the
identity component $G_0$ is nilpotent.
\end{proof}

\begin{remark}
If $G$ is compact and connected, Proposition~\ref{PostEx} shows that a
positively expansive endomorphism is a smooth expanding map in the sense of
Shub \cite{Shub1969}: after choosing an adapted left-invariant metric, its
differential expands every tangent vector uniformly. In this case $G$ is a
compact connected nilpotent Lie group and hence a torus. Thus
Theorem~\ref{thm:positive-expansive-shadowing} recovers the familiar fact that
expanding toral endomorphisms have shadowing, while also treating
non-compact Lie groups. More generally, automatic shadowing for positively
expansive maps in the compact Lie setting follows from Sakai
\cite[Theorem~1]{Sakai2003}; see also \cite[Lemma~9]{Sakai2010}. The torus
conclusion in the compact connected case also follows from Eisenberg's
classical result
\cite{EisenbergPositive1966}, since Proposition~\ref{PostEx} gives
surjectivity.
\end{remark}

We next turn from positively expansive endomorphisms to expansive
automorphisms. We shall use the following known characterization.

\begin{lemma}\label{Exp}
Let $G$ be a Lie group and let $\alpha\in\operatorname{Aut}(G)$. Then
$\alpha$ is expansive if and only if $\di\alpha$ is hyperbolic.
\end{lemma}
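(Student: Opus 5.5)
We prove both implications by reducing to the linear map $A=\di\alpha$ through the exponential chart, in the same way as Proposition~\ref{PostEx}. Since $\alpha$ is a topological automorphism, $A$ is invertible. Its spectrum therefore avoids $0$, and hyperbolicity means $\mathfrak g=E^{\mathrm s}\oplus E^{\mathrm u}$ with $A|_{E^{\mathrm s}}$ contracting and $A|_{E^{\mathrm u}}$ expanding in an adapted max-norm. Throughout we use the identity $\alpha^n(\exp X)=\exp(A^nX)$ for all $n\in\mathbb Z$, which holds because $\alpha^{-1}$ is also a continuous automorphism with differential $A^{-1}$.

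\emph{Expansive $\Rightarrow$ hyperbolic.} Suppose $A$ has an eigenvalue of modulus $1$. Then there is a nonzero real vector $X$, taken from the real part of an eigenvector or from a two-dimensional rotation plane, whose orbit $(A^nX)_{n\in\mathbb Z}$ is bounded. Indeed, on the real invariant subspace for $\lambda$ and $\bar\lambda$ with $|\lambda|=1$, the eigenvector span is invariant and $A$ acts there as a rotation or as $\pm1$. Fix a neighbourhood $V=\exp(U)$ with $\exp|_U$ injective. After scaling $X$ by a small $t>0$, we get $A^nX\in U$ for all $n\in\mathbb Z$. Hence $\exp X\in\bigcap_{n\in\mathbb Z}\alpha^{-n}(V)$ and $\exp X\neq e$. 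Since every neighbourhood of $e$ contains some $\exp(U)$ of this form, no neighbourhood can be expansive.

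\emph{Hyperbolic $\Rightarrow$ expansive.} Choose $\lambda>1$ and the max-norm $\|X\|=\max\{\|X^{\mathrm s}\|,\|X^{\mathrm u}\|\}$ so that $\|AX^{\mathrm u}\|\geq\lambda\|X^{\mathrm u}\|$ and $\|A^{-1}X^{\mathrm s}\|\geq\lambda\|X^{\mathrm s}\|$. Let $M>1$ bound both $\|A\|$ and $\|A^{-1}\|$. Choose $r$ with $\exp$ injective on $B_r(0)$, and set $V=\exp(B_{r/M}(0))$. Take $g\in\bigcap_n\alpha^{-n}(V)$ and write $g=\exp X$ with $X\in B_{r/M}(0)$. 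We argue inductively in both time directions that $\alpha^n(g)=\exp(A^nX)$ with $A^nX\in B_{r/M}(0)$. Suppose $A^nX\in B_{r/M}(0)$. Then $A^{n\pm1}X\in B_r(0)$. Since $\alpha^{n\pm1}(g)\in V$, it also equals $\exp Z$ for some $Z\in B_{r/M}(0)$, and injectivity of $\exp$ on $B_r(0)$ forces $A^{n\pm1}X=Z$. This is the same chart-trapping device used in the proof of Theorem~\ref{LieChar}. It follows that $(A^nX)_{n\in\mathbb Z}$ is bounded. Now $\|A^nX^{\mathrm u}\|\geq\lambda^n\|X^{\mathrm u}\|$ for $n\ge0$, and $\|A^{-n}X^{\mathrm s}\|\geq\lambda^n\|X^{\mathrm s}\|$ for $n\geq 0$. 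Boundedness therefore forces $X^{\mathrm u}=X^{\mathrm s}=0$, so $g=e$.

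The main obstacle is the trapping step, which keeps every iterate inside the injectivity domain of $\exp$ so that the group orbit is governed by the linear orbit. This is handled by the $r/M$ margin exactly as above. The rest is standard linear algebra. Alternatively, one may cite Morimoto and Shah for this known characterization.
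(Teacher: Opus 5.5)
Your proof is correct, but it follows a different route from the paper's. The paper does not use the exponential chart here. It first shows that $\alpha$ is expansive on $G$ if and only if $\alpha_0=\alpha|_{G_0}$ is expansive on the open characteristic subgroup $G_0$. It then treats the discrete case $G_0=\{e\}$ separately, and finally quotes Shah's infinitesimal characterization for the connected group $G_0$.

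You instead prove the criterion directly, as the two-sided analogue of the proof of Proposition~\ref{PostEx}. In one direction, a unimodular eigenvalue yields a bounded two-sided linear orbit, from an eigenvector for $\pm1$ or a rotation plane for non-real $\lambda$. This gives a non-trivial point of $\bigcap_{n\in\mathbb Z}\alpha^{-n}(V)$ for arbitrarily small $V$. In the other direction, the $r/M$ margin with $M\geq\max\{\|A\|,\|A^{-1}\|\}$ traps every iterate $A^nX$, $n\in\mathbb Z$, inside the injectivity ball. The adapted max-norm estimates, on $E^{\mathrm u}$ forward and on $E^{\mathrm s}$ backward, then force $X=0$.

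Your route is self-contained and uniform. The chart only sees a neighbourhood of $e$ inside $G_0$, so you need no reduction to the identity component and no separate discrete case; for $\mathfrak g=0$ your $V$ is simply $\{e\}$. It also produces an explicit expansive neighbourhood $\exp(B_{r/M}(0))$. The paper's route is shorter, but it relies on an external result stated for connected groups plus the extra bookkeeping needed to pass to disconnected $G$.
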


\begin{proof}
Let $G_0$ be the identity component and put
$\alpha_0=\alpha|_{G_0}$. The subgroup $G_0$ is characteristic and open in
$G$. We first observe that $\alpha$ is expansive on $G$ if and only if
$\alpha_0$ is expansive on $G_0$. One implication follows by intersecting
an expansive neighbourhood in $G$ with $G_0$. Conversely, if $V\subseteq
G_0$ is an expansive neighbourhood for $\alpha_0$, then $V$ is also a
neighbourhood of $e$ in $G$, and
\[
\bigcap_{n\in\mathbb Z}\alpha^{-n}(V)
=
\bigcap_{n\in\mathbb Z}\alpha_0^{-n}(V)
=\{e\};
\]
membership at time $n=0$ already forces the point to lie in $G_0$.

If $G_0=\{e\}$, then $G$ is discrete, every automorphism is expansive,
and $\di\alpha$ acts hyperbolically on the zero vector space. If
$G_0\neq\{e\}$, Shah's characterization
\cite[Proposition~2.3]{Shah2020}, applied to $\alpha_0$, shows that
$\alpha_0$ is expansive if and only if
$\di\alpha_0=\di\alpha$ is hyperbolic. The assertion follows.
\end{proof}

Together with Theorem~\ref{LieChar} and Corollary~\ref{twos}, this gives the following characterization of topologically Anosov automorphisms of Lie groups.

\begin{theorem}\label{thm:Lie-topological-Anosov}
Let $G$ be a Lie group and let $\alpha\in\operatorname{Aut}(G)$. Then the
following conditions are equivalent:
\begin{enumerate}
\item $\alpha$ is topologically Anosov;
\item $\alpha$ is expansive;
\item $\alpha$ has the shadowing property;
\item $\alpha$ has the two-sided shadowing property.
\end{enumerate}
Moreover, these conditions hold only when the identity component of $G$ is
nilpotent.
\end{theorem}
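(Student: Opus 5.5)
The plan is to route every condition through the single infinitesimal condition that $\di\alpha$ is hyperbolic. All the implications then follow from results already established.

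First, by Lemma~\ref{Exp}, (2) is equivalent to hyperbolicity of $\di\alpha$. By Theorem~\ref{LieChar}, (3) is equivalent to the same condition. Hence (2)$\Leftrightarrow$(3). By Corollary~\ref{twos} (which ultimately rests on Lemma~\ref{lem:one-two-sided}), (3)$\Leftrightarrow$(4). The remaining condition, (1), is by definition the conjunction of (2) and (4). So (1)$\Rightarrow$(2) is trivial, and (2)$\Rightarrow$(1) follows because (2) already gives (4) via the chain just described.

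For the final assertion, suppose the equivalent conditions hold. Then $\di\alpha$ is hyperbolic, and since $\alpha$ is a topological automorphism, $\di\alpha$ is invertible with inverse $\di(\alpha^{-1})$. It is also a Lie algebra automorphism of $\mathfrak g$, because differentials of continuous homomorphisms are Lie algebra homomorphisms. Thus $\mathfrak g$ admits a hyperbolic automorphism. Lemma~\ref{Bou} then shows that $\mathfrak g$ is nilpotent, and hence the connected Lie group $G_0$, whose Lie algebra is $\mathfrak g$, is nilpotent.

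There is no real obstacle here; the only point to check is the degenerate case where $G$ is discrete. In that case $\mathfrak g=0$, the hyperbolicity condition holds vacuously, and all four conditions hold trivially. This is consistent with the statement, since $G_0=\{e\}$ is nilpotent, and Lemma~\ref{Exp} already handles this case explicitly. I would also note that the invertibility of $\di\alpha$ is exactly what makes Lemma~\ref{Bou} applicable. This marks the difference from the endomorphism setting, where singular hyperbolic differentials do not force nilpotency.
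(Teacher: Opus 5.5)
Your proposal is correct and follows the paper's proof essentially step for step. Conditions (2) and (3) are routed through hyperbolicity of $\di\alpha$ via Lemma~\ref{Exp} and Theorem~\ref{LieChar}, (3)$\Leftrightarrow$(4) comes from Corollary~\ref{twos}, (1) holds by definition, and the nilpotency of $G_0$ follows from Lemma~\ref{Bou} applied to the hyperbolic Lie algebra automorphism $\di\alpha$.
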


\begin{proof}
By Lemma~\ref{Exp}, $\alpha$ is expansive if and only if $\di\alpha$ is
hyperbolic. By Theorem~\ref{LieChar}, this is equivalent to $\alpha$ having
shadowing. Corollary~\ref{twos} gives the equivalence with two-sided
shadowing, and the equivalence with being topologically Anosov follows from
the definition.

Finally, $\di\alpha$ is a hyperbolic automorphism of the Lie algebra
$\mathfrak g$ of $G$. By Lemma~\ref{Bou}, $\mathfrak g$ is nilpotent, and
therefore the identity component $G_0$ is nilpotent.
\end{proof}

The following standard observation identifies the classical smooth
notion of Anosov dynamics for Lie group automorphisms with hyperbolicity
of the differential; compare \cite{Morimoto1981,Bhattacharya2004,Shah2020}.

\begin{proposition}\label{prop:classical-Anosov-Lie}
Let $G$ be a compact Lie group and let
$\alpha\in\operatorname{Aut}(G)$. Then $\alpha$ is an Anosov
diffeomorphism in the classical smooth sense if and only if
$\di\alpha$ is hyperbolic.
\end{proposition}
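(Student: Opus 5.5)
The argument runs in both directions through left translation, so that the tangent bundle of $G$ becomes $G\times\mathfrak g$. Fix an inner product on $\mathfrak g$ and the associated left-invariant Riemannian metric on $G$. Since $\alpha$ is a homomorphism, $\alpha\circ L_g=L_{\alpha(g)}\circ\alpha$, and hence
\[
D\alpha_g\circ (DL_g)_e=(DL_{\alpha(g)})_e\circ\di\alpha .
\]
In the left trivialization $TG\cong G\times\mathfrak g$, the derivative $D\alpha^n$ therefore acts as $(g,X)\mapsto(\alpha^n(g),A^nX)$ with $A=\di\alpha$. Lengths are preserved because the metric is left-invariant. This identity is what makes both directions essentially linear algebra.

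For sufficiency, assume $A$ is hyperbolic. Since $\alpha$ is an automorphism, $A$ is invertible, so $\mathfrak g=E^{\mathrm s}\oplus E^{\mathrm u}$ with $A$-invariant summands. Define $E^{\mathrm s}_g=(DL_g)_eE^{\mathrm s}$ and $E^{\mathrm u}_g=(DL_g)_eE^{\mathrm u}$. These subbundles are continuous, indeed smooth, since left translation is smooth. They are $D\alpha$-invariant by the displayed identity. The estimates $\|A^n|_{E^{\mathrm s}}\|\le C\lambda^n$ and $\|A^{-n}|_{E^{\mathrm u}}\|\le C\lambda^n$ then transfer verbatim to $D\alpha^{\pm n}$ on these bundles, so $\alpha$ is an Anosov diffeomorphism. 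Compactness is used only so that the definition applies and so that all Riemannian metrics on $G$ are equivalent, which lets us use the left-invariant one.

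For necessity, assume $\alpha$ is Anosov, with splitting $TG=E^{\mathrm s}\oplus E^{\mathrm u}$ and estimates for some metric. Since $G$ is compact, that metric is uniformly equivalent to the left-invariant one, so the estimates persist with a larger constant $C$. Evaluate the splitting at the fixed point $e$: we get $\mathfrak g=E^{\mathrm s}_e\oplus E^{\mathrm u}_e$, both summands invariant under $D\alpha_e=A$, with $\|A^nv\|\le C\lambda^n\|v\|$ on $E^{\mathrm s}_e$ and $\|A^{-n}v\|\le C\lambda^n\|v\|$ on $E^{\mathrm u}_e$. If $\mu$ is an eigenvalue of the complexified restriction to $E^{\mathrm s}_e$, then $|\mu|^n\le C\lambda^n$, so $|\mu|<1$. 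The same argument applied to $A^{-1}$ on $E^{\mathrm u}_e$ gives $|\mu|>1$ for eigenvalues there. The spectrum of $A$ is the union of these two spectra, so $A$ is hyperbolic.

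The only potentially delicate point is the necessity direction: one might fear the Anosov splitting need not see the group structure. This is avoided by looking only at the fixed point $e$, where no equivariance of the splitting is needed, together with the equivalence of metrics on a compact manifold. If $G$ is zero-dimensional, the definitions make both sides hold trivially.
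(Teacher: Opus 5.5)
Your proof is correct and follows essentially the same route as the paper. For sufficiency you left-translate the hyperbolic splitting of $\di\alpha$ via $\alpha\circ L_x=L_{\alpha(x)}\circ\alpha$, and for necessity you read off hyperbolicity of $\di\alpha$ from the Anosov splitting at the fixed point $e$; your spectral-radius argument on $E^{\mathrm s}_e$ and $E^{\mathrm u}_e$ supplies a step the paper leaves implicit, and the comparison of Riemannian metrics at that step is harmless but unnecessary, since spectral radius does not depend on the choice of norm on $T_eG$.
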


\begin{proof}
Suppose that $A=\di\alpha$ is hyperbolic, and write
$\mathfrak g=E^{\mathrm s}\oplus E^{\mathrm u}$ for its hyperbolic
splitting. Choose an adapted norm on $\mathfrak g$, and let the
corresponding left-invariant Riemannian metric on $G$ be fixed. Define
\[
E_x^{\mathrm s}=\di L_x(E^{\mathrm s}),
\qquad
E_x^{\mathrm u}=\di L_x(E^{\mathrm u}).
\]
Since $\alpha\circ L_x=L_{\alpha(x)}\circ\alpha$, the splitting
$TG=E^{\mathrm s}\oplus E^{\mathrm u}$ is $D\alpha$-invariant, and the
usual uniform exponential estimates follow directly from those for $A$.
Thus $\alpha$ is an Anosov diffeomorphism.

Conversely, if $\alpha$ is Anosov, then $e$ is a fixed point and the
Anosov splitting at $e$ shows that $D\alpha_e=\di\alpha$ is hyperbolic.
\end{proof}

\begin{corollary}\label{cor:compact-Lie-Anosov}
Let $G$ be a compact Lie group and let
$\alpha\in\operatorname{Aut}(G)$. Then the following are equivalent:
\begin{enumerate}
\item $\alpha$ is an Anosov diffeomorphism;
\item $\alpha$ is topologically Anosov;
\item $\alpha$ is expansive;
\item $\alpha$ has shadowing;
\item $\alpha$ has two-sided shadowing.
\end{enumerate}
\end{corollary}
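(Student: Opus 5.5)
The plan is to chain together results already established, with the differential $\di\alpha$ as the common pivot; no new analysis should be needed. First, a compact Lie group is a Lie group, so Theorem~\ref{thm:Lie-topological-Anosov} applies directly to $\alpha\in\operatorname{Aut}(G)$. It gives the equivalence of (2) topologically Anosov, (3) expansive, (4) shadowing and (5) two-sided shadowing. Through Lemma~\ref{Exp} and Theorem~\ref{LieChar}, each of these is equivalent to hyperbolicity of $\di\alpha$.

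Second, Proposition~\ref{prop:classical-Anosov-Lie} says that, for a compact Lie group, $\alpha$ is an Anosov diffeomorphism in the classical smooth sense exactly when $\di\alpha$ is hyperbolic. This connects (1) to the same pivot condition, so all five conditions are equivalent to hyperbolicity of $\di\alpha$ and hence to each other. Concretely, I will write (1)$\Leftrightarrow$[$\di\alpha$ hyperbolic] by Proposition~\ref{prop:classical-Anosov-Lie}, (3)$\Leftrightarrow$[$\di\alpha$ hyperbolic] by Lemma~\ref{Exp}, and (2)$\Leftrightarrow$(3)$\Leftrightarrow$(4)$\Leftrightarrow$(5) by Theorem~\ref{thm:Lie-topological-Anosov}.

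Before writing this, I will check that the definition of an Anosov diffeomorphism used in Proposition~\ref{prop:classical-Anosov-Lie} assumes a compact manifold. A compact Lie group qualifies even when it is disconnected, because it has finitely many components. I will also check that the continuous automorphism $\alpha$ is smooth, so that (1) makes sense; this holds because continuous homomorphisms of Lie groups are analytic.

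The only point that could need care is the converse direction of Proposition~\ref{prop:classical-Anosov-Lie}, which reads off hyperbolicity of $\di\alpha$ from the Anosov splitting at the fixed point $e$. That direction has already been proved, so nothing beyond bookkeeping remains, and I expect no genuine obstacle.
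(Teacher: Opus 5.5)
Your proposal is correct and matches the paper's proof, which simply combines Proposition~\ref{prop:classical-Anosov-Lie} (linking (1) to hyperbolicity of $\di\alpha$) with Theorem~\ref{thm:Lie-topological-Anosov} (equating (2)--(5) with that same condition via Lemma~\ref{Exp} and Theorem~\ref{LieChar}). Your extra checks, that a compact Lie group is a compact manifold and that $\alpha$ is smooth, are harmless bookkeeping the paper leaves implicit.
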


\begin{proof}
Combine Proposition~\ref{prop:classical-Anosov-Lie} with
Theorem~\ref{thm:Lie-topological-Anosov}.
\end{proof}

\begin{remark}
For compact connected Lie groups, Corollary~\ref{cor:compact-Lie-Anosov}
is essentially classical. Morimoto treated Euclidean and toral
automorphisms and obtained related structural restrictions for
automorphisms of compact connected Lie groups \cite{Morimoto1981}; Aoki's
solenoidal result is another closely related compact connected precedent
\cite[Theorem~2]{AokiSolenoidal1983}. Theorem~\ref{LieChar} is broader in two
directions: the ambient Lie group need not be compact, and the map need not
be invertible or even locally invertible.
\end{remark}

We shall also need a simple permanence property of shadowing under taking powers. Although elementary, it is useful for treating nilpotent endomorphisms.

\begin{lemma}\label{powern}
	Let $G$ be a topological group and let $\alpha\in\operatorname{End}(G)$.
	Then, for every $n\geq 1$, $\alpha$ has shadowing if and only if
	$\alpha^n$ has shadowing. In particular, $\alpha$ has shadowing if and only
	if there exists $n\geq 1$ such that $\alpha^n$ has shadowing.
\end{lemma}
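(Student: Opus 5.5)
The plan is to prove the two directions of Lemma~\ref{powern} separately, working with a general topological group $G$ and its left uniformity. We will use continuity of $\alpha$ at $e$ and the homomorphism identity $\alpha(gh)=\alpha(g)\alpha(h)$ to control how errors propagate.

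\emph{Shadowing of $\alpha$ implies shadowing of $\alpha^n$.} Let $U$ be given, and let $V$ witness $U$-shadowing for $\alpha$. Given a $V$-pseudo-orbit $(y_k)$ of $\alpha^n$, we interpolate it by true $\alpha$-orbit segments: put $x_{kn+j}=\alpha^j(y_k)$ for $0\leq j<n$. Consecutive terms inside a block have error $e$. At a block boundary the error is
\[
\alpha(x_{kn+n-1})^{-1}x_{(k+1)n}=\alpha^n(y_k)^{-1}y_{k+1}\in V.
\]
So $(x_m)$ is a $V$-pseudo-orbit of $\alpha$. A point $x$ that $U$-shadows it satisfies $\alpha^{kn}(x)^{-1}y_k\in U$, which is exactly $U$-shadowing for $\alpha^n$.

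\emph{Shadowing of $\alpha^n$ implies shadowing of $\alpha$.} Let $U$ be given. First choose a neighbourhood $U_1$ with $U_1U_1\subseteq U$ and
\[
\alpha^j(U_1)\subseteq U_1' \quad (0\leq j<n),
\]
where $U_1'$ is a neighbourhood with $U_1'U_1'\subseteq U$. This is possible by continuity of the finitely many maps $\alpha^j$ at $e$. Let $W$ witness $U_1$-shadowing for $\alpha^n$.

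Next, for a $V$-pseudo-orbit $(x_m)$ of $\alpha$, we estimate how far $x_{kn+j}$ drifts from $\alpha^j(x_{kn})$. Write $x_{m+1}=\alpha(x_m)v_m$ with $v_m\in V$. Then
\[
\alpha^j(x_{kn})^{-1}x_{kn+j}=\alpha^{j-1}(v_{kn})\,\alpha^{j-2}(v_{kn+1})\cdots v_{kn+j-1}.
\]
This is a product of at most $n$ factors, each of the form $\alpha^i(V)$ with $i<n$. By continuity of the $\alpha^i$ at $e$, we can choose $V$ small enough that every such product lies in $W\cap U_1$.

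Then $(x_{kn})_k$ is a $W$-pseudo-orbit of $\alpha^n$, so some $x$ satisfies $\alpha^{kn}(x)^{-1}x_{kn}\in U_1$. Finally,
\[
\alpha^{kn+j}(x)^{-1}x_{kn+j}=\alpha^j\bigl(\alpha^{kn}(x)^{-1}x_{kn}\bigr)\cdot\alpha^j(x_{kn})^{-1}x_{kn+j}\in U_1'\,U_1'\subseteq U.
\]

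The one step that needs care is the order in which the neighbourhoods are chosen: first $U_1'$, then $U_1$, then $W$, then $V$. Since all the maps involved are finitely many continuous homomorphisms, no compactness or local compactness is needed. The final clause of the lemma follows at once: if $\alpha^n$ has shadowing for some $n\geq 1$, the second direction gives shadowing of $\alpha$, and the converse is the case $n=1$.
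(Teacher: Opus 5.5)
Your proof is correct and follows the paper's argument essentially step for step. The forward direction uses the same block interpolation $x_{kn+j}=\alpha^j(y_k)$. The converse uses the same product formula for $\alpha^j(x_{kn})^{-1}x_{kn+j}$ and the same splitting $\alpha^{kn+j}(x)^{-1}x_{kn+j}=\alpha^j\bigl(\alpha^{kn}(x)^{-1}x_{kn}\bigr)\cdot\alpha^j(x_{kn})^{-1}x_{kn+j}$, with your $U_1'$, $U_1$, $W$ playing the roles of the paper's $W$ (and $H$), $R$, $S$. As a minor point, your condition $U_1U_1\subseteq U$ is never used: the second factor lands in $U_1'$ because the $j=0$ case of your choice gives $U_1\subseteq U_1'$.
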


\begin{proof}
	First assume that $\alpha$ has shadowing, and fix $n\geq 1$. Let $U$ be a
	neighbourhood of $e$. Choose a neighbourhood $V$ of $e$ such that every
	$V$-pseudo-orbit for $\alpha$ is $U$-shadowed by an orbit of $\alpha$.
	
	Let $(y_k)_{k\geq 0}$ be a $V$-pseudo-orbit for $\alpha^n$, that is,
	\[
	\alpha^n(y_k)^{-1}y_{k+1}\in V
	\qquad (k\geq 0).
	\]
	Define a sequence $(x_i)_{i\geq 0}$ by
	\[
	x_{kn+j}=\alpha^j(y_k),
	\qquad k\geq 0,\ 0\leq j<n.
	\]
	Then $(x_i)$ is a $V$-pseudo-orbit for $\alpha$: all intermediate steps are
	exact, and the only possible error occurs from $x_{kn+n-1}$ to $x_{kn+n}$,
	where it is precisely
	\[
	\alpha^n(y_k)^{-1}y_{k+1}\in V.
	\]
	Hence there exists $z\in G$ such that
	\[
	\alpha^i(z)^{-1}x_i\in U
	\qquad (i\geq 0).
	\]
	Taking $i=kn$, we get
	\[
	(\alpha^n)^k(z)^{-1}y_k
	=
	\alpha^{kn}(z)^{-1}x_{kn}
	\in U
	\qquad (k\geq 0).
	\]
	Thus $(y_k)$ is $U$-shadowed by the $\alpha^n$-orbit of $z$. Therefore
	$\alpha^n$ has shadowing.
	
	Conversely, assume that $\alpha^n$ has shadowing. Let $U$ be a
	neighbourhood of $e$. Choose neighbourhoods $W,H$ of $e$ such that
	\[
	WH\subseteq U.
	\]
	By continuity of the maps $\alpha^j$, $0\leq j<n$, choose a neighbourhood
	$R$ of $e$ such that
	\[
	\alpha^j(R)\subseteq W
	\qquad (0\leq j<n).
	\]
	Let $S$ be a neighbourhood of $e$ such that every $S$-pseudo-orbit for
	$\alpha^n$ is $R$-shadowed by an orbit of $\alpha^n$.
	
	For a neighbourhood $V$ of $e$, put $P_0(V)=\{e\}$ and
	\[
	P_j(V)=\alpha^{j-1}(V)\alpha^{j-2}(V)\cdots\alpha(V)V
	\qquad(j\geq 1).
	\]
	Choose a neighbourhood $V$ of $e$ so small that
	\[
	P_n(V)\subseteq H\cap S.
	\]
	This is possible by continuity of multiplication and of the finitely many
	maps $\alpha^j$. Since $e\in V$, it also follows that
	\[
	P_j(V)\subseteq P_n(V)\subseteq H\cap S
	\qquad(0\leq j\leq n).
	\]
	
	Let $(x_i)_{i\geq 0}$ be a $V$-pseudo-orbit for $\alpha$. Put
	$e_i=\alpha(x_i)^{-1}x_{i+1}\in V$. For $j\geq 1$ we have
	\[
	x_{i+j}
	=
	\alpha^j(x_i)\alpha^{j-1}(e_i)\alpha^{j-2}(e_{i+1})\cdots e_{i+j-1}.
	\]
	In particular, the subsequence $(x_{kn})_{k\geq 0}$ is an $S$-pseudo-orbit
	for $\alpha^n$, since
	\[
	\alpha^n(x_{kn})^{-1}x_{(k+1)n}
	\in
	P_n(V)\subseteq S.
	\]
	Hence there exists $z\in G$ such that
	\[
	\alpha^{kn}(z)^{-1}x_{kn}\in R
	\qquad (k\geq 0).
	\]
	
	Now fix $i\geq 0$ and write $i=kn+j$ with $0\leq j<n$. Then
	\[
	\alpha^i(z)^{-1}x_i
	=
	\alpha^j\bigl(\alpha^{kn}(z)^{-1}x_{kn}\bigr)
	\cdot
	\alpha^j(x_{kn})^{-1}x_{kn+j}.
	\]
	The first factor lies in $\alpha^j(R)\subseteq W$. By the preceding
	product formula, the second factor belongs to $P_j(V)\subseteq H$ (and it
	equals $e$ when $j=0$). Hence
	\[
	\alpha^i(z)^{-1}x_i\in WH\subseteq U.
	\]
	Thus $(x_i)$ is $U$-shadowed by the $\alpha$-orbit of $z$. Therefore
	$\alpha$ has shadowing.
\end{proof}

We now apply the preceding results to semisimple Lie groups. Let us recall
that an endomorphism $\alpha$ of a topological group $G$ is called
\emph{nilpotent} if $\alpha^n$ is the trivial endomorphism for some
$n\geq 1$. If $G$ is a connected Lie group, this is equivalent to the
nilpotency of $\di\alpha$. Indeed,
$\di(\alpha^n)=(\di\alpha)^n$, and if $\di(\alpha^n)=0$, then
$\alpha^n(G)$ is a connected Lie subgroup with zero Lie algebra, hence is
trivial.

\begin{proposition}\label{semisim}
	Let $G$ be a connected semisimple Lie group, and let
	$\alpha\in\operatorname{End}(G)$. Then $\alpha$ has shadowing if and only if
	$\alpha$ is nilpotent.
\end{proposition}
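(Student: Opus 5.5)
We prove both directions by reducing to the Lie algebra via Theorem~\ref{LieChar}. Put $A=\di\alpha$. Since $\di\alpha$ is a Lie algebra endomorphism of the semisimple algebra $\mathfrak g$, the kernel $\ker A$ is an ideal of $\mathfrak g$. Hence $\mathfrak g=\ker A\oplus\mathfrak h$, where $\mathfrak h$ is the complementary ideal (the sum of the remaining simple factors). The image $A(\mathfrak g)=A(\mathfrak h)$ is isomorphic to $\mathfrak h$, so it is a semisimple subalgebra of $\mathfrak g$.

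For the direction ``nilpotent $\Rightarrow$ shadowing'', suppose $\alpha^n$ is trivial. Then $(\di\alpha)^n=0$, so every eigenvalue of $\di\alpha$ is $0$. Thus $\di\alpha$ is hyperbolic, and Theorem~\ref{LieChar} gives shadowing. Alternatively, the trivial endomorphism obviously has shadowing (take $y=e$: $\alpha^k(e)^{-1}x_k=x_k$; indeed the pseudo-orbit condition gives $x_{k+1}\in V$, and one handles $x_0$ by choosing $y=x_0$). Lemma~\ref{powern} then transfers shadowing from $\alpha^n$ to $\alpha$.

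For the converse, assume $\alpha$ has shadowing, so $A$ is hyperbolic by Theorem~\ref{LieChar}. We must show $A$ is nilpotent; the discussion preceding the proposition then gives that $\alpha$ is nilpotent. Consider the decreasing chain of images $\mathfrak g\supseteq A\mathfrak g\supseteq A^2\mathfrak g\supseteq\cdots$, which stabilizes at some $\mathfrak s=A^m\mathfrak g$ with $A(\mathfrak s)=\mathfrak s$. By the observation above, applied inductively, each $A^k\mathfrak g$ is the image of a semisimple algebra under a homomorphism, hence is semisimple (or zero). The restriction $A|_{\mathfrak s}$ is a surjective, hence bijective, Lie algebra automorphism of $\mathfrak s$. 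Its eigenvalues are exactly the nonzero eigenvalues of $A$: $\mathfrak s$ is the Fitting ``invertible part'' of $A$, and the generalized $0$-eigenspace is the complement. Hence $A|_{\mathfrak s}$ is a hyperbolic automorphism of $\mathfrak s$. By Lemma~\ref{Bou}, $\mathfrak s$ is nilpotent. A nilpotent semisimple Lie algebra is zero, so $\mathfrak s=0$, i.e.\ $A^m=0$, and $A$ is nilpotent.

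The main obstacle is the step showing that $\mathfrak s$ is semisimple and that $A|_{\mathfrak s}$ is hyperbolic. For semisimplicity, we use that homomorphic images of semisimple Lie algebras are semisimple: the kernel is an ideal, hence a sum of simple factors, and the quotient is isomorphic to the complementary factors. For hyperbolicity, we use the Fitting decomposition: the nonzero part of the spectrum of $A$ lives on $\bigcap_k A^k\mathfrak g=\mathfrak s$, so $\sigma(A|_{\mathfrak s})=\sigma(A)\setminus\{0\}$, which is disjoint from $S^1$. The final passage from $\di(\alpha^m)=0$ to $\alpha^m$ trivial uses connectedness of $G$, as recorded before the proposition.
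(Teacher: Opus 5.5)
Your proof is correct and follows the paper's argument: stabilize the image chain at $\mathfrak s=A^m\mathfrak g$, note that $\mathfrak s$ is semisimple and that $A|_{\mathfrak s}$ is a hyperbolic automorphism, apply Lemma~\ref{Bou} to force $\mathfrak s=0$, and use connectedness to pass from $\di\alpha$ to $\alpha$. For the converse, the paper uses your second route (the trivial endomorphism has shadowing, then Lemma~\ref{powern}); your first route, via Theorem~\ref{LieChar} with all eigenvalues equal to zero, is an equally valid shortcut.
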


\begin{proof}
	Put $A=\di\alpha:\mathfrak g\to\mathfrak g$.
	
	Assume first that $\alpha$ has shadowing. By Theorem~\ref{LieChar}, $A$ is
	hyperbolic. Since $\mathfrak g$ is semisimple, we claim that $A$ must be
	nilpotent. Indeed, choose $m\geq 1$ such that
	\[
	A^m(\mathfrak g)=A^{m+1}(\mathfrak g).
	\]

Set $\mathfrak h=A^m(\mathfrak g)$. Then $\mathfrak h$ is semisimple,
because it is isomorphic to the quotient $\mathfrak g/\ker A^m$. Moreover,
$A(\mathfrak h)=\mathfrak h$, so $A|_{\mathfrak h}$ is an automorphism of
$\mathfrak h$. Since $\mathfrak h$ is $A$-invariant, the spectrum of
$A|_{\mathfrak h}$ is contained in the spectrum of $A$. Hence
$A|_{\mathfrak h}$ is hyperbolic. If $\mathfrak h\neq 0$, then
Lemma~\ref{Bou} implies that $\mathfrak h$ is nilpotent, contradicting
semisimplicity. Therefore $\mathfrak h=0$, and hence $A$ is nilpotent.

	Since $G$ is connected, nilpotency of $A=\di\alpha$ is equivalent to
	nilpotency of $\alpha$. Therefore $\alpha$ is nilpotent.
	
	Conversely, suppose that $\alpha$ is nilpotent. Then $\alpha^n$ is the
	trivial endomorphism for some $n\geq 1$. The trivial endomorphism has
	shadowing. Hence, by Lemma~\ref{powern}, $\alpha$ has shadowing.
\end{proof}

This last conclusion is specific to the Lie setting. It cannot be extended to all connected compact groups. Recall that a connected compact group $G$ is called \emph{semisimple} if $[G,G]=G$, or equivalently, if $G$ is a quotient of a possibly infinite product of simple connected compact Lie groups by a totally disconnected closed normal subgroup \cite{HM}. The following example shows that a connected semisimple compact group may admit a shadowing automorphism.

\begin{example}\label{example}
	Let $S$ be a non-abelian simple connected compact Lie group, and let
	\[
	G=S^{\mathbb Z}.
	\]
	Then $G$ is a connected semisimple compact group.
	Let $\sigma:G\to G$ be the left shift, given by
	\[
	\sigma((g_i)_{i\in\mathbb Z})=(g_{i+1})_{i\in\mathbb Z}.
	\]
	Then $\sigma$ is a continuous automorphism of $G$. We show that $\sigma$
	has shadowing.
	
	Let $U$ be a neighbourhood of $e$ in $G$. Since $G$ has the product
	topology, there exist an integer $N\geq 1$ and a neighbourhood $W$ of $e$
	in $S$ such that
	\[
	\{g\in G:g_i\in W\text{ for all }|i|\leq N\}\subseteq U.
	\]
	Choose a neighbourhood $L$ of $e$ in $S$ such that every product of at most
	$N$ elements of $L\cup L^{-1}$ lies in $W$. Define a neighbourhood $V$ of
	$e$ in $G$ by
	\[
	V=\{g\in G:g_i\in L\text{ for all }|i|\leq N\}.
	\]
	
	Let $(x^n)_{n\geq 0}$ be a $V$-pseudo-orbit for $\sigma$. Write
	$x^n=(x^n_i)_{i\in\mathbb Z}$. The condition means that, for every
	$n\geq 0$ and every $|i|\leq N$,
	\[
	(x^n_{i+1})^{-1}x^{n+1}_i\in L.
	\]
	For later use, put
	\[
	\ell^n_i=(x^n_{i+1})^{-1}x^{n+1}_i\in L
	\qquad(n\geq0,\ |i|\leq N).
	\]
	Define $y=(y_i)_{i\in\mathbb Z}\in G$ by
	\[
	y_i=
	\begin{cases}
		x^i_0, & i\geq 0,\\
		x^0_i, & i<0.
	\end{cases}
	\]
	We claim that the orbit of $y$ $U$-shadows $(x^n)_{n\geq 0}$.
	
	Fix $n\geq 0$ and $|i|\leq N$. We need to show that the $i$-th coordinate
	of $\sigma^n(y)^{-1}x^n$ lies in $W$, that is,
	\[
	y_{n+i}^{-1}x^n_i\in W.
	\]
	If $i=0$, then $y_n=x^n_0$ and the required element is $e$. If $i>0$,
	repeated use of
	$x^{m+1}_j=x^m_{j+1}\ell^m_j$ gives
	\[
	x^{n+i}_0
	=x^n_i\ell^n_{i-1}\ell^{n+1}_{i-2}\cdots\ell^{n+i-1}_0.
	\]
	Thus $y_{n+i}^{-1}x^n_i$ is an ordered product of $i\leq N$ elements
	of $L^{-1}$, and hence belongs to $W$.

	Suppose next that $i<0$ and $n+i\geq0$. Then
	\[
	x^n_i
	=x^{n+i}_0\ell^{n+i}_{-1}\ell^{n+i+1}_{-2}\cdots
	\ell^{n-1}_i.
	\]
	Consequently, $y_{n+i}^{-1}x^n_i$ is an ordered product of
	$-i\leq N$ elements of $L$, and again belongs to $W$.

	If $n+i<0$, then $y_{n+i}=x^0_{n+i}$. In this case necessarily
	$0\leq n\leq |i|\leq N$. If $n=0$, then $y_i=x_i^0$ and the required
	element is $e$. If $n\geq1$, then
	\[
	x^n_i
	=x^0_{n+i}\ell^0_{n+i-1}\ell^1_{n+i-2}\cdots\ell^{n-1}_i.
	\]
	Therefore $y_{n+i}^{-1}x^n_i$ is an ordered product of $n\leq N$
	elements of $L$, and belongs to $W$ also in this case.
	
	Therefore $\sigma^n(y)^{-1}x^n\in U$ for every $n\geq 0$. Hence $\sigma$
	has shadowing.
\end{example}

\section{The Totally Disconnected Case}\label{tdlc}

We now turn to totally disconnected locally compact groups. The starting
point of the theory is van Dantzig's theorem, which asserts that every
totally disconnected locally compact group has a neighbourhood basis at the
identity consisting of compact open subgroups. Thus, in the totally
disconnected case, compact open subgroups play a role analogous to small
neighbourhoods of the identity in Lie theory.

For compact metrizable groups, the automatic shadowing of automorphisms is
classical. Aoki proved that every continuous automorphism of a compact
totally disconnected metrizable group has the pseudo-orbit tracing
property \cite[Application~2, pp.~170--172]{AokiSplitting1985}; see also
\cite[Theorem~D]{AokiDense1985}. Two endomorphism subclasses are also already
implicit in the literature. Reid's invariant open-subgroup basis for
type~(F) profinite groups gives shadowing by the elementary error recursion
\cite[Definition~3.1 and Proposition~3.2]{Reid2014}, while Wibmer's symbolic
model for positively expansive profinite endomorphisms reduces that case to
a one-sided group shift of finite type
\cite[Proposition~2.1 and Lemma~3.3]{Wibmer2022}. A standard
countable-quotient argument likewise removes metrizability in Aoki's compact
automorphism setting. The aim of this section is to treat arbitrary totally
disconnected locally compact groups and arbitrary continuous endomorphisms
in one statement.

For a long time, however, van Dantzig's theorem remained one of the few
general structural tools available for arbitrary totally disconnected
locally compact groups. A major breakthrough was Willis' theory \cite{Willis1994,Willis2001} of the scale
function and tidy subgroups for automorphisms. This theory gives a
quantitative measure of how an automorphism moves compact open subgroups,
and it provides canonical compact open subgroups adapted to the dynamics of
the automorphism. It has become one of the basic tools in the modern
structure theory of totally disconnected locally compact groups, and it is
also inherently dynamical in nature.

Willis' theory was later extended from automorphisms to endomorphisms. We
shall use only the tidy-above part of this theory. Let $G$ be a totally
disconnected locally compact group, let $\alpha\in\operatorname{End}(G)$,
and let $U$ be a compact open subgroup of $G$. Define
\[
U_-=\{u\in U:\alpha^n(u)\in U\text{ for all }n\geq 0\}=\bigcap_{n\geq 0} \alpha^{-n}(U),
\]
and
\[
\begin{aligned}
U_+=\{u\in U:\ &\text{there exists }(u_n)_{n\geq 0}\subseteq U
\text{ such that }u_0=u,\\
&\text{and }\alpha(u_{n+1})=u_n\text{ for all }n\geq 0\}.
\end{aligned}
\]
The subgroup $U$ is said to be \emph{tidy above} for $\alpha$ if
\[
U=U_+U_-.
\]
By the tidying procedure for endomorphisms, compact open subgroups tidy
above for $\alpha$ form a neighbourhood basis at the identity.

\begin{lemma}[{\cite[Proposition~3.9]{WillisEndomorphisms}}]\label{tabase}
Let $G$ be a totally disconnected locally compact group and
$\alpha\in\operatorname{End}(G)$. Then
for every compact open subgroup $U$ of $G$, there exists a compact open
subgroup $V\leq U$ which is tidy above for $\alpha$.
\end{lemma}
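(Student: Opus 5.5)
This is Willis' tidying-above step, cited as \cite[Proposition~3.9]{WillisEndomorphisms}. The plan is to adapt his first tidying step for automorphisms, replacing inverse images by backward chains inside $U$.

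\textbf{Candidates and the intersection $U_-$.} Put
\[
U_n=\bigcap_{k=0}^{n}\alpha^{-k}(U)\qquad(n\geq 0).
\]
Each $U_n$ is a compact open subgroup, since it is a finite intersection of open subgroups inside the compact set $U$. The $U_n$ decrease to $U_-$. I will show that $V=U_n$ is tidy above for all sufficiently large $n$.

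First, $(U_n)_-=U_-$, because $\bigcap_{j\geq0}\alpha^{-j}(U_n)=\bigcap_{k\geq 0}\alpha^{-k}(U)$. So tidiness above reduces to showing
\[
U_n\subseteq (U_n)_+U_-.
\]

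\textbf{Compactness description of $(\cdot)_+$.} For a compact open subgroup $W$, let $W^{(m)}$ be the set of $u\in W$ admitting a chain $u=u_0,u_1,\ldots,u_m$ in $W$ with $\alpha(u_{i+1})=u_i$. Each $W^{(m)}$ is a compact subgroup: it is the image under the first coordinate of a closed subgroup of $W^{m+1}$. The $W^{(m)}$ decrease in $m$. A König-type compactness argument, using nonempty compact sets of chain extensions, gives $W_+=\bigcap_m W^{(m)}$. The same argument shows that
\[
\bigcap_m W^{(m)}U_-=W_+U_-,
\]
since $U_-$ is compact and decreasing intersections commute with multiplication by a compact set.

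It therefore suffices to find $n$ such that $U_n\subseteq U_n^{(m)}U_-$ for every $m$.

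\textbf{Key step: a counting argument.} The idea is that an element $u\in U_n$ failing to lie in $U_n^{(m)}U_-$ must leave $U$ at some forward time $\leq n+1$ modulo $U_-$. Each time we enlarge $n$, such failures strictly decrease a finite index. Concretely, I would consider the integers
\[
d_n=[\alpha^{n}(U_n):\alpha^{n}(U_n)\cap U_n].
\]
These are finite, because $\alpha^n(U_n)\subseteq U$ is compact and $U_n$ is open. I would show they are non-increasing in $n$, so they eventually stabilise, say for $n\geq N$.

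At a stable index, the identity $d_{n}=d_{n+1}$ should yield an exact factorisation. Given $u\in U_{n+1}$, one can choose $v\in U_{n+1}$ with $\alpha(v)$ in the same coset as $u$, so that $u=\alpha(v)w$ with $w\in U_{n+2}$. Iterating this produces backward chains in $U_N$ of any length, with the leftover factors lying in deeper and deeper $U_k$. Passing to the limit via the compactness of the previous step gives
\[
U_N=(U_N)_+U_-.
\]

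\textbf{Expected main obstacle.} I expect the main obstacle to be exactly this stabilisation-to-factorisation step. The difficulty is the non-injectivity of $\alpha$: preimages are not unique, and the index bookkeeping must be done with images $\alpha^{n}(U_n)$ rather than with $\alpha^{-1}$. Ensuring that the chosen preimages stay inside $U_N$ rather than merely inside $U$ is where I would follow Willis' endomorphism argument most carefully.
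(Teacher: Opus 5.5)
The paper gives no proof of this lemma; it only cites Willis. Measured against the argument that is actually needed, your setup is sound. The $U_n=\bigcap_{k=0}^n\alpha^{-k}(U)$ are compact open, $(U_n)_-=U_-$, and the chain-compactness argument correctly reduces tidiness above of $U_n$ to $U_n\subseteq U_n^{(m)}U_-$ for all $m$.

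\textbf{The gap.} It lies in the key step, and it starts with the index: $d_n$ is not non-increasing. Since $\alpha^n(U_n)\cap U_n=\alpha^n(U_{2n})$ and $\ker\alpha^n\cap U_n\subseteq U_{2n}$, one has $d_n=[U_n:U_{2n}]$, which grows. For $G=\mathbb{Q}_p$, $\alpha(x)=p^{-1}x$ and $U=\mathbb{Z}_p$, you get $U_n=p^n\mathbb{Z}_p$ and $d_n=p^n$, even though every $U_n$ is already tidy above. In general $d_n\to\infty$ unless $U_-$ is open. So stabilisation of $d_n$ cannot be the mechanism. Since you defer the stabilisation-to-factorisation step to Willis, the substance of the lemma is not yet proved.

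\textbf{The fix.} The quantity that works is the one-step index $c_n=[U_n:U_{n+1}]$.
\begin{itemize}
\item The set $\alpha(U_{n+1})U_{n+1}\subseteq U_n$ is a union of $[\alpha(U_{n+1}):\alpha(U_{n+1})\cap U_{n+1}]$ left cosets of $U_{n+1}$.
\item One checks $\alpha(U_{n+1})\cap U_{n+1}=\alpha(U_{n+2})$.
\item Since $\ker\alpha\cap U\subseteq U_{n+2}$, we get $[\alpha(U_{n+1}):\alpha(U_{n+2})]=c_{n+1}$.
\end{itemize}
Hence $c_{n+1}\leq c_n$, with equality if and only if $U_n=\alpha(U_{n+1})U_{n+1}$. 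This is exactly the factorisation you were aiming for. Fix $N$ such that $c_n$ is constant for $n\geq N$.

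\textbf{The iteration.} It must be done in two stages. If you only re-factorise the element $v$, the accumulated error $\alpha^{m-1}(w_m)\cdots\alpha(w_2)w_1$ lies merely in $U_{N+1}$, not in deeper and deeper $U_k$.
\begin{enumerate}
\item Iterate in the error factor. This gives $U_n=\alpha(U_{n+1})U_{n+k}$ for all $k\geq 1$, hence $U_n=\alpha(U_{n+1})U_-$ for $n\geq N$ by compactness of $\alpha(U_{n+1})$.
\item Iterate in the other factor, using $\alpha(U_-)\subseteq U_-$. This gives $U_N=\alpha^m(U_{N+m})U_-\subseteq U_N^{(m)}U_-$ for every $m$.
\end{enumerate}
Your compactness step then yields $U_N=(U_N)_+U_-$, so $U_N$ is tidy above.
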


We shall show that this decomposition is exactly what is needed to trace
pseudo-orbits. The $U_-$ part controls forward errors, while the $U_+$ part
allows us to correct them by choosing suitable regressive trajectories.

\begin{theorem}\label{Th:tdlc}
Let $G$ be a totally disconnected locally compact group, and let
$\alpha:G\to G$ be a continuous endomorphism. Then $\alpha$ has the
shadowing property.
\end{theorem}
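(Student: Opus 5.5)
Our plan is to show that, for every compact open subgroup $U$ of $G$, any compact open subgroup $V\leq U$ which is tidy above for $\alpha$ (such $V$ exists by Lemma~\ref{tabase}) witnesses $U$-shadowing. In fact we aim to show that every $V$-pseudo-orbit is $V$-shadowed. By van Dantzig's theorem this suffices, since compact open subgroups form a neighbourhood basis at $e$.

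\textbf{Step 1: reduction to an error equation.} As in the proof of Proposition~\ref{HypSha}, write $x_{n+1}=\alpha(x_n)e_n$ with $e_n\in V$. If we find $(d_n)_{n\geq 0}$ in $V$ with
\[
d_{n+1}=\alpha(d_n)e_n \qquad (n\geq 0),
\]
then $y=x_0d_0^{-1}$ satisfies $\alpha^n(y)^{-1}x_n=d_n\in V$ for every $n$, by the same induction as in Proposition~\ref{HypSha}. So the whole problem is to solve this error equation inside $V$.

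\textbf{Step 2: reduction to finite sequences.} Since $V$ is compact and open, the compactness argument of Lemma~\ref{ShadFin} applies directly. For each $m$, the set of admissible initial values $d_0$ for the equation truncated at time $m$ is closed in $V$ (it is an intersection of preimages of $V$ under continuous maps). These sets decrease in $m$, so if each is non-empty their intersection is non-empty. Any $d_0$ in the intersection yields an infinite solution with all $d_n\in V$, because $V$ is a group. Thus it suffices to solve the finite equation with $d_0,\dots,d_m\in V$, for arbitrary $e_0,\dots,e_{m-1}\in V$.

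\textbf{Step 3: solving the finite equation with $V=V_+V_-$.} This is the main obstacle. Because $V$ is a subgroup, the requirement $d_n\in V$ amounts to $d_n\in V$ together with $\alpha(d_n)\in V$, that is, $d_n\in V\cap\alpha^{-1}(V)$, for $n<m$. Two standard facts about tidy-above subgroups will be used: $\alpha(V_-)\subseteq V_-$, and every element of $V_+$ has an $\alpha$-preimage in $V_+$ (so $V_+\subseteq\alpha(V_+)$). Also $V=V_+V_-=V_-V_+$, the second equality obtained by taking inverses.

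In the abelian model the solution mimics the Lie proof. Write $e_k=u_kw_k$ with $u_k\in V_+$ and $w_k\in V_-$. The $V_-$-parts $w_k$ are pushed forward by $\alpha$, which keeps them in $V_-$. The $V_+$-parts $u_k$ are pulled backward along chosen regressive trajectories in $V_+$, and these are cancelled at time $k$. Concretely,
\[
d_n=\prod_{k<n}\alpha^{n-1-k}(w_k)\cdot\prod_{k\geq n}u_k^{(k+1-n)},
\]
up to sign and ordering conventions, where $u^{(j)}$ denotes the $j$-th term of a regressive trajectory of $u$. In the finite case only the finitely many $k<m$ occur, so no convergence is needed.

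For non-abelian $G$ we would instead proceed by backward induction on $n$. Set $d_m=e$. Given $d_{n+1}\in V$, we need $d_n\in V$ with $\alpha(d_n)=d_{n+1}e_n^{-1}$, and this is only possible when $d_{n+1}e_n^{-1}\in\alpha(V)$. Since $V_-$-factors need not lie in $\alpha(V)$, we strengthen the induction hypothesis. We would require $d_{n+1}\in V_+$ and re-factor at each step: decompose $d_{n+1}e_n^{-1}\in V=V_+V_-$ as $a\,b$ with $a\in V_+$ and $b\in V_-$. The factor $a$ is pulled back via a preimage in $V_+$. The factor $b\in V_-$ is absorbed by passing it to the forward part of the solution, using that $\alpha$-images of $V_-$ stay in $V_-\subseteq V$.

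Making this bookkeeping consistent is the delicate point. The $V_-$-corrections must be carried forward while the $V_+$-corrections are carried backward, and non-commutativity has to be handled using only the subgroup property of $V$. We would first attempt a two-pass construction: a backward pass producing $V_+$-preimages, followed by a forward pass accumulating the $V_-$ parts. At each step we would check that the product stays in $V$ because both $V_+$ and $V_-$ are subsets of the group $V$.
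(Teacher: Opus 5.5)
\textbf{There is a genuine gap in Step 3.} Your Steps 1 and 2 are fine. The error equation is exactly equivalent to shadowing, and the compactness argument correctly reduces everything to solving $d_{n+1}=\alpha(d_n)e_n$ with $d_0,\dots,d_m\in V$ for each finite $m$. Your abelian formula also works, with the $V_+$-sum entering with a minus sign. But Step 3 for non-abelian $G$ is the entire content of the theorem, and what you sketch there does not work.

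The strengthened hypothesis $d_{n+1}\in V_+$ is untenable. For $G=\mathbb{Q}_p$, $\alpha(x)=px$ and $V=\mathbb{Z}_p$ one has $V_+=\{0\}$ and $V_-=V$. Requiring $d_1,\dots,d_m\in V_+$ then forces $e_1=\dots=e_{m-1}=0$, so here the solution must be built purely forward.

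More generally, once $d_{n+1}$ is fixed, the backward step needs $d_{n+1}e_n^{-1}\in\alpha(V)$. This fails whenever its $V_-$-factor $b$ lies outside $\alpha(V)$. \emph{Absorbing $b$ forward} means changing $d_{n+1}$ to $ae_n$ and recomputing $d_{n+2},\dots,d_m$ from it, with nothing keeping them in $V$. Without commutativity you cannot run a backward $V_+$-pass and a forward $V_-$-pass separately and then combine them. So the two-pass plan has no mechanism behind it.

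\textbf{The missing idea} is a forward induction on $m$ with an invariant on the \emph{last} term only. It rests on the fact that the error equation is preserved by left multiplication with a true orbit: if $d_{i+1}=\alpha(d_i)e_i$ and $g_{i+1}=\alpha(g_i)$, then $g_{i+1}d_{i+1}=\alpha(g_id_i)e_i$. This just replaces $y$ by $yg_0^{-1}$.

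The inductive step goes as follows.
Suppose $d_0,\dots,d_m\in V$ solve the equation with $d_m\in V_-$; start with $d_0=e$.
Then $d_{m+1}:=\alpha(d_m)e_m\in V$. Write $d_{m+1}=cw$ with $c\in V_+$ and $w\in V_-$.
Choose $c=c_0,c_1,\dots,c_{m+1}\in V_+$ with $\alpha(c_{j+1})=c_j$.
Replace $d_i$ by $c_{m+1-i}^{-1}d_i$ for $0\leq i\leq m+1$.
The equation still holds, all terms remain in the group $V$, and the new last term is $w\in V_-$.

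This is precisely the paper's proof, phrased there through points $y_m$ with $\alpha^m(y_m)\in x_mV_-$ and $y_{m+1}=y_ma_{m+1}^{-1}$. Combined with your Steps 1 and 2, it completes the argument.
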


\begin{proof} 
Let $U\leq G$ be compact open. By Lemma~\ref{tabase}, there is a compact
open subgroup $V\leq U$ which is
tidy above for $\alpha$. Recall that, in this setting,
$V=V_+V_-$. Since $V_+$ and $V_-$ are subgroups, the equality $V=V_+V_-$ also gives
$V=V_-V_+$.

Let $(x_n)_{n\geq 0}$ be a $V$-pseudo-orbit, and put
$e_n=\alpha(x_n)^{-1}x_{n+1}\in V$. We shall prove that, for every
$m\geq 0$, there exists $y_m\in G$ such that
$\alpha^i(y_m)\in x_iV$ for all $0\leq i\leq m$. More precisely, we prove
by induction that $y_m$ can be chosen so that
$\alpha^m(y_m)=x_mb_m$ for some $b_m\in V_-$.

For $m=0$, take $y_0=x_0$ and $b_0=e$. Suppose that $y_m$ and
$b_m\in V_-$ have been chosen. Since $x_{m+1}=\alpha(x_m)e_m$, we have
\[
\alpha^{m+1}(y_m)=\alpha(x_m)\alpha(b_m)
=x_{m+1}e_m^{-1}\alpha(b_m).
\]
Set $u=e_m^{-1}\alpha(b_m)$. Then $u\in V$, since $e_m\in V$ and
\[
\alpha(b_m)\in \alpha(V_-)\leq V_-\leq V.
\] 
Write $u=ba$ with $b\in V_-$ and
$a\in V_+$. Choose
$a_0,a_1,\ldots,a_{m+1}$ in $V_+$ with $a_0=a$ and
$\alpha(a_{j+1})=a_j$ for $0\leq j\leq m$. Put 
$y_{m+1}=y_ma_{m+1}^{-1}$.

For $0\leq i\leq m$, we have $\alpha^i(a_{m+1})\in V$, and hence
$\alpha^i(y_{m+1})\in \alpha^i(y_m)V=x_i V$. Moreover,
\[
\alpha^{m+1}(y_{m+1})
=\alpha^{m+1}(y_m)\alpha^{m+1}(a_{m+1})^{-1}
=x_{m+1}ua^{-1}=x_{m+1}b,
\]
with $b\in V_-$. This completes the induction.

Now define
$K_m=\{y\in x_0V:\alpha^i(y)\in x_iV\text{ for }0\leq i\leq m\}$.
The preceding paragraph shows that each $K_m$ is non-empty. The sets $K_m$
are closed subsets of the compact set $x_0V$, and they form a decreasing
sequence. Hence $\bigcap_{m\geq 0}K_m\neq\emptyset$. Choose
$y\in\bigcap_{m\geq 0}K_m$. Then $\alpha^n(y)\in x_nV$ for every
$n\geq 0$, so $\alpha^n(y)^{-1}x_n\in V\leq U$ for every $n\geq 0$.
Thus $(x_n)_{n\geq 0}$ is $U$-shadowed by the forward orbit of $y$.

Since compact open subgroups form a neighbourhood basis at the identity,
the preceding argument proves shadowing for every neighbourhood of $e$.
\end{proof}

\begin{remark}\label{rem:Aoki-automatic-shadowing}
When $G$ is compact and metrizable and $\alpha$ is an automorphism, the
preceding theorem is due to Aoki
\cite[Application~2, pp.~170--172]{AokiSplitting1985}.
Theorem~\ref{Th:tdlc}
passes beyond the compact automorphism setting by allowing $G$ to be
non-compact and $\alpha$ to be non-invertible, without imposing
metrizability. Reid's type~(F) result and
Wibmer's positively expansive profinite result cover the special
endomorphism classes described above. We have not found the full generality
of Theorem~\ref{Th:tdlc} stated in the earlier literature.
\end{remark}

\begin{corollary}\label{cor:tdlc-expanding-Anosov}
Let $G$ be a totally disconnected locally compact group.
\begin{enumerate}
\item If $\alpha:G\to G$ is a continuous endomorphism, then $\alpha$ is
topologically expanding if and only if it is positively expansive.
\item If $\alpha\in\operatorname{Aut}(G)$, then $\alpha$ is topologically
Anosov if and only if it is expansive.
\end{enumerate}
\end{corollary}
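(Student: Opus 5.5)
The corollary follows directly from Theorem~\ref{Th:tdlc} together with Lemma~\ref{lem:one-two-sided}; I would not build any new dynamical machinery. Throughout, $G$ carries its left uniformity, whose basic entourages are $\{(x,y):x^{-1}y\in W\}$ for neighbourhoods $W$ of $e$. I will also use that the abstract notions of positive expansiveness and expansiveness in the topological-dynamics definition agree with the group-theoretic ones given earlier.

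The first step is to check this agreement of definitions. For an endomorphism, the relation $(\alpha^n(x),\alpha^n(y))\in E_W$ means $\alpha^n(x)^{-1}\alpha^n(y)=\alpha^n(x^{-1}y)\in W$. So the uniform condition "for all $n\geq0$ this forces $x=y$" is equivalent to $\bigcap_{n\geq0}\alpha^{-n}(W)=\{e\}$. In one direction, take $x=e$. In the other, apply the condition to $x^{-1}y$. The same computation with $n\in\mathbb Z$ handles automorphisms, since $\alpha^{-n}$ is again a homomorphism. Hence the two notions of (positive) expansiveness coincide.

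For (1), by definition $\alpha$ is topologically expanding exactly when it is positively expansive and has shadowing. Theorem~\ref{Th:tdlc} says every continuous endomorphism of $G$ has shadowing, so the shadowing clause is automatic. The equivalence with positive expansiveness follows at once.

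For (2), $\alpha$ is topologically Anosov exactly when it is expansive and has two-sided shadowing. Theorem~\ref{Th:tdlc} gives one-sided shadowing. Since $G$ is locally compact, Lemma~\ref{lem:one-two-sided} upgrades this to two-sided shadowing, so again only expansiveness remains.

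The only point needing any care is the translation between uniform expansiveness and the group formulation. This is routine because of left invariance: $\alpha^n(x)^{-1}\alpha^n(y)=\alpha^n(x^{-1}y)$. I do not expect any genuine obstacle.
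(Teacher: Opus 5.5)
Your proposal is correct and is essentially the paper's own proof: Theorem~\ref{Th:tdlc} makes shadowing automatic, and Lemma~\ref{lem:one-two-sided} upgrades it to two-sided shadowing for automorphisms. Your additional check that uniform (positive) expansiveness for the left uniformity matches the group formulation, via $\alpha^n(x)^{-1}\alpha^n(y)=\alpha^n(x^{-1}y)$, is a valid detail that the paper leaves implicit.
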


\begin{proof}
By Theorem~\ref{Th:tdlc}, every continuous endomorphism of a totally
disconnected locally compact group has shadowing. Hence, for endomorphisms,
positive expansiveness is equivalent to positive expansiveness together
with shadowing, that is, to being topologically expanding.

Similarly, every automorphism $\alpha$ of a totally disconnected locally
compact group has shadowing by Theorem~\ref{Th:tdlc}, and hence has
two-sided shadowing by Lemma~\ref{lem:one-two-sided}. Therefore, for
automorphisms, expansiveness is equivalent to expansiveness together with
two-sided shadowing, that is, to being topologically Anosov.
\end{proof}

We conclude by combining Theorem~\ref{Th:tdlc} with Aoki's two
propositions on dense orbits. In the metrizable setting, Aoki proved that a
topologically mixing automorphism with the shadowing property
can act only on a compact group \cite[Proposition~3]{AokiDense1985}. He also
proved that an automorphism with a dense orbit is topologically mixing and
has the shadowing property \cite[Proposition~4]{AokiDense1985}. Thus,
in that setting, the weaker hypothesis of a dense orbit already implies
compactness. Previts and Wu later revisited Aoki's compactness theorem and
gave a proof, under its Haar-ergodicity formulation, based on Willis' theory
\cite{PrevitsWu2003}. Accordingly, the compactness conclusion itself should
not be regarded as new here. The proof below packages the compact quotient
reduction together with the automatic-shadowing theorem used in that
reduction.

Theorem~\ref{Th:tdlc} shows in addition that the shadowing assertion in
Aoki's Proposition~4 is automatic. Consequently, the part of Aoki's metric
argument devoted to establishing the pseudo-orbit tracing property can be
replaced by Theorem~\ref{Th:tdlc}; only the argument from a dense orbit to
topological mixing remains. The following formulation also removes
metrizability.

\begin{corollary}\label{cor:dense-orbit-compact}
Let $G$ be a totally disconnected locally compact group, and let
$\alpha\in\operatorname{Aut}(G)$. If $\alpha$ has a dense orbit, then $G$
is compact. Moreover, $\alpha$ is topologically mixing.
\end{corollary}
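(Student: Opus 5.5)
Let $x\in G$ have a dense $\alpha$-orbit $\{\alpha^n(x):n\in\mathbb Z\}$. I will first prove topological mixing by a compact-open-subgroup argument that does not need metrizability, and then derive compactness from mixing together with shadowing (Theorem~\ref{Th:tdlc}) by passing to compact quotients, in the spirit of Aoki's Proposition~3.

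\emph{Step 1: reduction to small translates.} Mixing means that for nonempty open $A,B$ one has $\alpha^n(A)\cap B\neq\varnothing$ for all large $n$. By van Dantzig it suffices to take $A=gW$ and $B=hW$ with $W$ a compact open subgroup. Set $A=gW$ and $B=hW$. By density of the orbit, $g$ and $h$ may be replaced by orbit points, so we may take $A=\alpha^p(x)W$ and $B=\alpha^q(x)W$.

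\emph{Step 2: mixing.} Let $W$ be compact open and consider $W_-=\bigcap_{n\ge0}\alpha^{-n}(W)$. The key claim is that density of the orbit forces $\alpha^n(W)$ to eventually meet every coset $yW$. I would try the following argument. The set $S=\{n\in\mathbb Z:\alpha^n(x)\in xW\}$ is the return-time set. Because $W$ is a subgroup, $n,m\in S$ gives $\alpha^{n}(x)^{-1}\alpha^m(x)$ controlled by $\alpha^n(W)$ and $W$. Transitivity plus the group structure should give that $\{\alpha^n(x)^{-1}x\}$ is dense, so $x$ returns in both directions infinitely often. I expect to use Willis' scale here: if the scale $s(\alpha)>1$, then $\alpha^n(W_+)$ grows and a tidy $W$ covers cosets for large $n$. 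If instead $s(\alpha)=s(\alpha^{-1})=1$, there is a compact open $W$ normalized by $\alpha$, and then $G/W$ is a discrete orbit-dense system. A dense orbit in a discrete space covers the whole space, and a bijection having a single orbit forces $G/W$ to be finite or $\mathbb Z$-like. The $\mathbb Z$-like case must then be excluded. This step is the main obstacle. If the scale route stalls, I would fall back on Aoki's original dense-orbit-to-mixing argument, which does not use metrizability in an essential way.

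\emph{Step 3: compactness.} Suppose $G$ is noncompact. Fix $V$ compact open, tidy above for both $\alpha$ and $\alpha^{-1}$, and let $U\le V$ be given by the shadowing property from Theorem~\ref{Th:tdlc} and Lemma~\ref{lem:one-two-sided} (two-sided shadowing). Using mixing, build a two-sided $U$-pseudo-orbit that stays in $V$ at times $\le 0$ and jumps to a far-away coset at some time. I would then concatenate arbitrarily many mixing excursions so that the shadowing point must satisfy incompatible conditions: the orbit of a single point would have to lie in a fixed compact set at infinitely many times while also escaping every compact set. Finally, I would pass through the largest compact normal subgroup $\bigcap_n\alpha^n(V)$-type quotients to reduce to the case where this contradicts the existence of the tidy subgroup. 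Alternatively, I would show directly that mixing gives $\alpha^n(V)V\supseteq$ every compact set, and then $G=\bigcup_n\alpha^n(V)V$ is covered by the closure of $V_{++}V$, which is compact.
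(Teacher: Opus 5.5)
There is a genuine gap: neither substantive step is carried out. In Step~2 the scale dichotomy does not prove mixing. The branch $s(\alpha)=s(\alpha^{-1})=1$ gives a compact open $W$ with $\alpha(W)=W$, and it leads to compactness, not mixing. There the ``$\mathbb Z$-like'' case is excluded at once: the coset $W$ is a fixed point of the induced permutation of $G/W$, so a single orbit forces $G=W$. The branch $s(\alpha)>1$, where all the content lies, is only the unproved assertion that growing $\alpha^n(W_+)$ covers cosets. Since the conclusion of the corollary forces $s(\alpha)=1$ (take $W=G$), this branch must in fact be shown to be empty, which is essentially the whole theorem. Your fallback claim, that Aoki's dense-orbit-to-mixing argument does not use metrizability essentially, is asserted without support.

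Step~3 does not close either. An orbit that lies in a fixed compact set at infinitely many times and leaves every compact set at other times involves no contradiction. In your alternative, mixing plus tidiness above does give $G=V_{++}V$, since $\alpha^n(V)V=\alpha^n(V_+)\alpha^n(V_-)V=\alpha^n(V_+)V$. But for tidy $V$, the subgroup $V_+=V_{++}\cap V$ has index at least $[\alpha^n(V_+):V_+]=s(\alpha)^n$ in $V_{++}$. So $V_{++}V$ is a union of infinitely many cosets of $V$, hence non-compact, as soon as $s(\alpha)>1$, and you are back at the same unresolved case.

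The missing idea is that you need not redo Aoki's metric arguments on $G$; instead, reduce to a metrizable quotient where they apply directly.
\begin{enumerate}
\item The dense orbit makes $G$ $\sigma$-compact: the translates $\alpha^n(x)U$, $n\in\mathbb Z$, of a compact open subgroup $U$ cover $G$.
\item Kakutani--Kodaira then gives a compact normal $K_0$ with $G/K_0$ metrizable. The subgroup $K=\bigcap_{n\in\mathbb Z}\alpha^n(K_0)$ is compact, normal and $\alpha$-invariant, and $G/K$ is metrizable because it embeds in $\prod_{n\in\mathbb Z}G/\alpha^n(K_0)$.
\item The induced automorphism $\bar\alpha$ of the totally disconnected locally compact group $G/K$ still has a dense orbit. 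Aoki's Proposition~4 makes it topologically mixing, and Theorem~\ref{Th:tdlc} with Lemma~\ref{lem:one-two-sided} gives it shadowing.
\item Aoki's Proposition~3 then makes $G/K$ compact; since $K$ is compact, so is $G$.
\end{enumerate}
Only after this is mixing of $\alpha$ on $G$ itself obtained, from compact-group ergodic theory. A dense orbit gives Haar-ergodicity, and ergodic automorphisms of compact groups are mixing. Measure-theoretic mixing gives topological mixing because non-empty open sets have positive Haar measure. Reversing your order in this way avoids having to prove mixing on a possibly non-compact, non-metrizable group, which is exactly where your Step~2 stalls.
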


\begin{proof}
Let $x\in G$ have a dense $\alpha$-orbit. Then $G$ is separable and
$\sigma$-compact. Indeed, if $U$ is a compact open subgroup, then the
countable set $\{\alpha^n(x):n\in\mathbb Z\}$ is dense and its left
$U$-translates cover $G$.

By the Kakutani--Kodaira theorem, there is a compact normal subgroup $K_0$
of $G$ such that $G/K_0$ is metrizable. Put
\[
K=\bigcap_{n\in\mathbb Z}\alpha^n(K_0).
\]
Then $K$ is compact, normal and $\alpha$-invariant. Moreover, $G/K$ embeds
into the countable product
\[
\prod_{n\in\mathbb Z}G/\alpha^n(K_0),
\]
so $G/K$ is metrizable. Let $\bar\alpha$ be the induced automorphism of
$G/K$. The orbit of $xK$ is dense in $G/K$.

By Aoki's Proposition~4, $\bar\alpha$ is topologically mixing. By
Theorem~\ref{Th:tdlc} and Lemma~\ref{lem:one-two-sided}, it also has the
pseudo-orbit tracing property with respect to a compatible left-invariant
metric. Aoki's Proposition~3 therefore implies that $G/K$ is compact.
Since $K$ is compact, $G$ is compact.

Finally, a compact group automorphism with a dense orbit is ergodic with
respect to normalized Haar measure \cite{RajagopalanSchreiber1970}, and an
ergodic automorphism of a compact group is mixing
\cite{Yuzvinskii1967}. Since every non-empty open subset of $G$ has
positive Haar measure, measure-theoretic mixing implies topological
mixing. Hence $\alpha$ is topologically mixing.
\end{proof}

\section*{Acknowledgements}

\noindent The results of this paper were first presented at a seminar in
May. The author is grateful to Professor Wei He for his helpful suggestions.

\noindent\textbf{Financial support.}
This work was supported by NSFC Grants 12301089 and 12271258.

\noindent\textbf{Use of artificial intelligence tools.}
During the preparation of the initial draft, the author used OpenAI's
GPT-5.5 as a writing aid and to carry out the principal calculations in the
proof of Lemma~\ref{lem:local-error-equation}. Example~\ref{example} was
initially suggested by GPT-5.5. After completing the initial draft, the
author used OpenAI's GPT-5.6-sol to assist
with editorial polishing, checking the mathematical arguments, and
searching the prior literature. The literature search assisted by
GPT-5.6-sol led to the identification of the earlier result discussed in
Remark~\ref{rem:Aoki-automatic-shadowing}. The author carefully reviewed
the entire manuscript and independently verified all calculations,
arguments, citations and bibliographic information. The author made all
final decisions concerning the manuscript and accepts full responsibility
for the accuracy, originality and content of the article.

\end{document}